\documentclass{amsart}
\usepackage[english]{babel}
\usepackage{amssymb,enumerate}
\usepackage{fancyhdr}
\usepackage{geometry}
\geometry{left=4.5cm, right=4.5cm, bottom=3.0cm, top=4.0cm}

\newcommand{\assign}{:=}
\newcommand{\backassign}{=:}
\newcommand{\infixand}{\text{ and }}
\newcommand{\nequiv}{\mathrel{\not\equiv}}
\newcommand{\nobracket}{}

\newcommand{\tmem}[1]{{\em #1\/}}
\newcommand{\tmop}[1]{\ensuremath{\operatorname{#1}}}
\newcommand{\tmstrong}[1]{\textbf{#1}}
\newcommand{\tmtextbf}[1]{\text{{\bfseries{#1}}}}
\newcommand{\tmtextit}[1]{\text{{\itshape{#1}}}}
\newenvironment{enumeratealpha}{\begin{enumerate}[a{\textup{)}}] }{\end{enumerate}}
\newenvironment{enumeratenumeric}{\begin{enumerate}[1.] }{\end{enumerate}}

\newtheorem{theorem}{Theorem}[section]
\newtheorem{corollary}[theorem]{Corollary}
\newtheorem{definition}[theorem]{Definition}
\newtheorem{lemma}[theorem]{Lemma}
\newtheorem{proposition}[theorem]{Proposition}
\theoremstyle{remark}\newtheorem{remark}{Remark}[section]
\theoremstyle{remark}\newtheorem{conjecture}{Conjecture}[section]

\makeatletter
\@addtoreset{equation}{section}
\makeatother

\pagestyle{fancy}
\fancyhead{}

\fancyfoot{}

\begin{document}

\title{On the subadditivity of generalized Kodaira dimensions}

\author{Bojie he and Xiangyu Zhou}

\address{Bojie He: School of Mathematical Sciences, Peking University, Beijing, 100871, P. R. China}

\email{hbj@amss.ac.cn}

\address{Xiangyu Zhou: Institute of Mathematics, Academy of Mathematics and Systems Science, Chinese Academy of Sciences, Beijing, 100190, P. R. China}

\email{xyzhou@math.ac.cn}

\thanks{The second author is partially supported by the National Natural Science Foundation of China (grant no. 11688101 and no. 12288201)}

\subjclass[2020]{32J25, 14C30, 32L10, 32U05, 14M25, 14F18}

\keywords{multiplier ideal sheaf, generalized Kodaira-Iitaka dimension, Iitaka fibration, Okounkov body, fibre spaces}

\begin{abstract}
     The goals of this paper are of two aspects. Firstly, we introduce the notion of generalized numerical Kodaira dimension with multiplier ideal sheaf and establish the subadditivity inequalities in terms of this notion, which can be used to give an analytic proof of O. Fujino's result on the subadditivity of the log Kodaira dimensions. Secondly, motivated by Zhou-Zhu's subadditivity of generalized Kodaira dimensions, we adopt another definition of generalized Kodaira dimension with multiplier ideal sheaf and show they are equal by using Okounkov bodies. As one application, we show that the superadditivity part in Zhou-Zhu's setting also holds true. As another application, we give an alternative proof of Zhou-Zhu's subadditivity formula, in the case when the singular metric $h_L$ has analytic singularities, by using generalized Iitaka fibrations.
\end{abstract}

{\maketitle}

\section{Introduction and main results}

\subsection{Introduction and background}

Let $X$ and $Y$ be two compact connected complex manifolds of dimension $n$ and $m$
($n \geq m$). In the analytic setting, we call $f : X  \rightarrow Y $ an
{\tmstrong{(analytic) fiber space}}, if $f$ is a surjective holomorphic
mapping with connected fibers. Moreover, when $X$ and $Y$ are supposed to be
projective manifolds (resp. K\"{a}hler manifolds), the fiber space $f : X \rightarrow Y$ is usually called
an {\tmstrong{algebraic fiber space}} (resp. {\tmstrong{K\"{a}hler fiber space}}).

\ Let
\[ \kappa (X) \assign \max \{\nu \in\mathbb{Z}\bigcup\{-\infty\}; \limsup_{k \rightarrow \infty} \frac{ h^0 (X,
   \mathcal{O}_X (k K_X))}{k^{\nu}}>0\} \in \{ - \infty, 0, 1, \ldots, n \} \]
be the Kodaira dimension of $X$, where $K_X$ is the canonical divisor. When
$\kappa (X) = \dim X = n$, $X$ is said to be of general type.

\begin{conjecture}[Conjecture $C_{n, m}$ or Iitaka conjecture]
Let $f : X \rightarrow Y$ be an algebraic fiber space
between two projective manifolds $X$ and $Y$. Let $F$ denote the general fiber
of $f$, then
\begin{equation}
 \kappa (X) \geq \kappa (F) + \kappa (Y) \label{c,n,m} .
\end{equation}
\end{conjecture}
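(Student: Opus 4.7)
The plan is to attack the conjecture via positivity of direct image sheaves. First I would pass to a birational model where $f : X \to Y$ is semistable (using Hironaka's resolution and the weak semistable reduction of Abramovich--Karu), so that $K_{X/Y}$ behaves well and the relative pluricanonical sheaves $f_* \mathcal{O}_X(k K_{X/Y})$ are torsion-free and locally free outside a proper analytic subset $\Delta \subset Y$. This reduction is harmless since Kodaira dimensions are birational invariants and only $\kappa(F)$ on the very general fiber enters the bound.

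The next step is to equip $K_{X/Y}^{\otimes k}$ with the fiberwise $L^{2/k}$ Bergman kernel metric, whose curvature current is semi-positive by the Berndtsson--P\u aun-type positivity of direct images. Combined with the multiplier ideal sheaf machinery developed earlier in this paper, this makes $f_* \bigl( \mathcal{O}_X(k K_{X/Y}) \otimes \mathcal{I}(h^k) \bigr)$ a weakly positive torsion-free sheaf on $Y$ in Viehweg's sense. Its generic rank equals $h^0(F, k K_F)$, which grows like $k^{\kappa(F)}$. Tensoring with $K_Y^{\otimes k}$ and using the projection formula gives
\[ h^0(X, K_X^{\otimes k}) = h^0\bigl(Y,\, K_Y^{\otimes k} \otimes f_* \mathcal{O}_X(k K_{X/Y})\bigr), \]
so it suffices to produce roughly $k^{\kappa(F)+\kappa(Y)}$ independent sections on the right, by combining the $k^{\kappa(Y)}$ sections of $K_Y^{\otimes k}$ with the $k^{\kappa(F)}$ ``relative'' sections captured by weak positivity.

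The main obstacle is precisely where the Iitaka conjecture remains open in full generality: converting weak positivity of $f_* \mathcal{O}_X(k K_{X/Y})$ into effective bigness of $\det f_* \mathcal{O}_X(k K_{X/Y})$ requires control on the variation of moduli of fibers. Viehweg settled this when $F$ is of general type; Kawamata when $F$ admits a good minimal model; Cao--P\u aun when $Y$ is an abelian variety. Via the relative Iitaka fibration of the general fiber one is reduced to the case $\kappa(F) = 0$, which without assuming abundance I would not expect to resolve by the present methods. I would therefore aim to prove the inequality in the subcases accessible to the multiplier-ideal and Okounkov-body technology set up in this paper, and in particular recover the known log and generalized versions (Fujino; Zhou--Zhu) rather than attack the full conjecture.
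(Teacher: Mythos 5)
You have correctly identified that the statement you were given is an open conjecture, not a theorem: Conjecture $C_{n,m}$ is stated in the paper purely as motivation, and the paper offers no proof of it (nor claims one). There is therefore no ``paper's own proof'' to compare against, and the honest thing to do is exactly what you did: lay out the standard line of attack and then name the precise place where it stalls. Your account of that line of attack is accurate --- pass to a good birational model, use Berndtsson--P\u aun / Zhou--Zhu positivity of relative $k$-Bergman metrics to get weak positivity of $f_*\mathcal{O}_X(kK_{X/Y})$ (resp.\ its multiplier-ideal-twisted variant), then try to convert that into a count of sections of $K_X^{\otimes k}$ via the projection formula. And your diagnosis of the obstruction is the correct one: weak positivity alone does not give the needed effective lower bound on $h^0$; one needs something like bigness of $\det f_*\mathcal{O}_X(kK_{X/Y})$ (Viehweg's $Q_{n,m}$), which is known when $F$ is of general type (Viehweg, Koll\'ar), when $F$ has a good minimal model (Kawamata), or when $Y$ is special (Cao--P\u aun, Hacon--Popa--Schnell), but not in general; and after reducing via the relative Iitaka fibration to $\kappa(F)=0$, the remaining case is essentially equivalent to parts of the abundance conjecture.

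Your conclusion --- to aim at the subcases the paper's multiplier-ideal and Okounkov-body machinery actually reaches --- is precisely what the paper does. What the paper proves are the \emph{generalized numerical} subadditivity inequalities (Theorem \ref{mainthm}), the log version (Theorem \ref{thm1.1}, reproved analytically in Remark \ref{alter}), and the generalized-Kodaira-dimension equality when $Y$ is of general type and $h_L$ has analytic singularities (Corollary \ref{dio}). None of these is Conjecture $C_{n,m}$ itself; the paper's own framing is that $C_{n,m}$ (and $\overline{C}_{n,m}$) would follow from the generalized abundance conjecture, which would identify $\kappa$ with $\kappa_\sigma$. So there is no gap in your reasoning to flag --- you have accurately described why the full conjecture is out of reach by these methods --- but be aware that what you wrote is a survey of the obstruction, not a proof, and should not be presented as the latter.
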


Note that when $X$ is not assumed to be K{\"a}hler, inequality (\ref{c,n,m}) does
not always hold in general (cf. Remark 15.3 of {\cite{ueno00}}).

In {\cite{viehweg1}}, E. Viehweg proposed a stronger version of Iitaka
conjecture:

\begin{conjecture}[Conjecture $C_{n, m}^+$ or generalized Iitaka conjecture]
Let $f :
X \rightarrow Y$ be an algebraic fiber space between two projective manifolds
$X$ and $Y$. Let $F$ denote the general fiber of $f$, then{\medskip}
\begin{equation}
 \kappa (X) \geq \kappa (F) + \max \{ \tmop{Var} (f), \kappa (Y) \} .
 \label{c,n,m+}
\end{equation}
\end{conjecture}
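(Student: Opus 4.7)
The statement is Viehweg's conjecture $C_{n,m}^+$, which is open in full generality. My plan is to reduce the inequality to a twisted form of $C_{n,m}$ in which the variation $\tmop{Var}(f)$ is absorbed into a Viehweg-type line bundle on $Y$, and then to prove that twisted statement by an analytic subadditivity with multiplier ideal sheaves.

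First, following Viehweg-Kawamata, I would construct on a birational model of $Y$ a line bundle $L$ coming from the moduli part of the fibration, typically as a subbundle of $\det f_* (m K_{X/Y})^{\otimes N}$ for $m, N$ sufficiently divisible. By the definition of $\tmop{Var}(f)$, one can arrange that the Iitaka dimension of $K_Y + L$ equals $\max\{\tmop{Var}(f), \kappa(Y)\}$. The Narasimhan-Simha fiberwise $L^2$ metric then furnishes $L$ with a singular Hermitian metric $h_L$ of semipositive curvature current, which is precisely the data feeding the generalized numerical Kodaira dimension with multiplier ideal sheaf introduced earlier.

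Second, I would establish a twisted subadditivity of the shape $\kappa(X, K_X) \geq \kappa(F, K_F) + \kappa(Y, K_Y + L, h_L)$, where the right-hand side is the generalized Kodaira dimension of $(L, h_L)$. The mechanism is to pull sections of $(K_Y + L)^{\otimes k} \otimes \mathcal{I}(h_L^k)$ on $Y$ back to a general fiber, combine them with fiberwise pluricanonical sections (about $k^{\kappa(F)}$ of them), and extend the products to sections of $k K_X$ on $X$ through Ohsawa-Takegoshi $L^2$ extension. The Okounkov-body comparison of the two versions of generalized Kodaira dimension then identifies this count with $\max\{\tmop{Var}(f), \kappa(Y)\}$, closing the estimate.

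The principal difficulty is the analytic control of $h_L$: in general its singularities are not of analytic type, and neither Ohsawa-Takegoshi extension nor the semipositivity of $f_* (m K_{X/Y})$ delivers the uniform $L^2$ estimates strong enough to propagate sections asymptotically as $k \to \infty$. This forces one to restrict to the analytic-singularities case of $h_L$ and to work with a generalized Iitaka fibration attached to $h_L$, on which the multiplier ideal sheaves stabilize and the relevant direct images become locally free; that is where I expect the real work to be, and where the present paper's Okounkov-body and generalized-Iitaka-fibration machinery must do its job.
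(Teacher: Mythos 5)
This statement is Viehweg's Conjecture $C^+_{n,m}$; it is open, and the paper does not prove it — it is quoted only as background. Your proposal is therefore a research programme rather than a proof, and you say as much in your final paragraph, but the gaps deserve to be named precisely.

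To ``arrange that $\kappa(Y, K_Y+L)=\max\{\tmop{Var}(f),\kappa(Y)\}$'' for the moduli-part line bundle $L$ on (a birational model of) $Y$ is not a formal consequence of the definition of $\tmop{Var}(f)$; it is essentially Viehweg's Conjecture $Q_{n,m}$, which is itself open in general and known only under strong hypotheses on $F$ (e.g.\ Koll\'ar's theorem when $F$ is of general type). Even granting this, the twisted subadditivity you posit, $\kappa(X,K_X)\geq\kappa(F,K_F)+\kappa(Y,K_Y+L,h_L)$ with $(L,h_L)$ living on $Y$, is not delivered by the machinery developed here: Theorem \ref{mainthm} places $(L,h_L)$ on $X$ and only controls $\kappa_\sigma$ on the left-hand side, while Corollary \ref{dio} uses the honest $\kappa$ but assumes both that $Y$ is of general type and that $h_L$ has analytic singularities. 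Without an abundance-type input to convert $\kappa_\sigma(X,K_X)$ into $\kappa(X,K_X)$, and without control over the singularities of the Narasimhan--Simha metric, the estimate does not close. In short, you have correctly rediscovered the Viehweg--Kawamata route and identified where the analysis breaks down, but the reduction ends on statements that are themselves conjectural, so this does not constitute a proof of $C^+_{n,m}$ — nor does the paper claim one.
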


Here $\tmop{Var} (f)$, roughly speaking, stands for the dimension of the
birational equivalence class of all the general fibers $F$.

\

Let $L$ (resp. $D$) be a $\mathbb{Q}$-line bundle (resp.  $\mathbb{Q}$-Cartier divisor)
over a compact (connected) complex manifold $X$, its \textbf{Iitaka $L$(resp. $D$)-dimension} $\kappa
(X, L)$ (resp. $\kappa (X, D)$) is usually defined as
\[ \kappa (X, L) \assign \max\{ \nu\in\mathbb{Z}\bigcup \{-\infty\}; \limsup_{k \rightarrow \infty} \frac{ h^0 (X, k
   k_0 L)}{ k^{\nu}}>0 \} \in \{- \infty, 0, \ldots, \dim X\} \]
\[ (\tmop{resp}.\quad \kappa (X, D) \assign \max\{ \nu\in\mathbb{Z}\bigcup \{-\infty\}; \limsup_{k \rightarrow \infty} \frac{ h^0 (X,  \mathcal{O}_{X}(k k_0 D))}{ k^{\nu}}>0 \} \]

   \[ \in \{- \infty, 0, \ldots, \dim X\} ),\]
where $k_0$ is the smallest integer such that $k_0 L$ (resp. $\mathcal{O}_X
(k_0 D)$) is a line bundle. One may see in section \ref{sect5} for some basic
and further explanations of this notion.

\fancyhead[LE]{\scriptsize \thepage} 
\fancyhead[CE]{\footnotesize BOJIE HE AND XIANGYU ZHOU}
\fancyhead[CO]{\footnotesize SUBADDITIVITY OF GENERALIZED KODAIRA DIMENSIONS}
\fancyhead[RO]{\scriptsize \thepage}

Assume that $X$ is projective and let $A$ be an ample line bundle on
$X$. By perturbing all sections of $k k_0 L$ with $A$,
Nakayama's \textbf{numerical dimension} of $D$ or $L = \mathcal{O} (D)$ (cf. Definition
2.7 in {\cite{fujino1}}, Chapter V in {\cite{nakayama}} or Definition 2.4.8 in
{\cite{fujino-book}}) is defined as
\[ \kappa_{\sigma} (X, L) \assign \max_{E : \text{a divisor of $X$}}
   \{\kappa_{\sigma} (X, L ; E)\} = \max_{m \in \mathbb{N}} \{\kappa_{\sigma}
   (X, L ; m A)\}, \]
where $\kappa_{\sigma} (X, L ; E)$ is defined to be
\[ \max \{\nu \in \mathbb{Z} \cup \{- \infty\}; \limsup_{k \rightarrow \infty}
   \frac{h^0 (X, k k_0 L + E)}{k^{\nu}} > 0\} . \]
\quad There are some basic properties about $\kappa$ and
$\kappa_{\sigma}$. For example, $\kappa (X, L) \geq 0$ if and only if $m_0 L$ is
effective for some $m_0 \in\mathbb{N}$, $\kappa_{\sigma} (X, L) \geq 0$ if and only if $L$ is
pseudoeffective. Some other properties are e.g. $\kappa (X, L) \leq \kappa_{\sigma} (X, L)$, $\kappa_{\sigma}
(X, L)$ equals the numerical dimension $\nu (X, L)$ when $L$ is nef, etc. To
see other various useful characterizations of $\kappa$ and $\kappa_{\sigma}$,
one may refer to {\cite{nakayama}} or {\cite{fujino-book}}.

In case when $L = K_X$ is the canonical bundle, $\kappa(X,K_X)$ (resp. $\kappa_{\sigma}(X,K_X)$)
is called the Kodaira dimension (resp. numerical Kodaira dimension) of $X$.

When $L$ is moreover assumed to be pseudoeffective, i.e. which can be equipped
with a singular metric $h_L$ whose curvature current is semi-positive, one may
similarly define the{\tmstrong{ {\tmstrong{{\tmstrong{generalized}} Kodaira
dimension}}}} (cf. Definition 1.1 in {\cite{zhou-zhu1}}) $\kappa (X, K_X + L,
h_L)$ as
\begin{equation}
  \max \{\nu \in \mathbb{Z}\bigcup \{-\infty\} ; \limsup_{k \rightarrow \infty} \frac{h^0 (X, k
  k_0 (K_X + L) \otimes \mathcal{I}_{k k_0} (h_L))}{k^{\nu}} > 0\} \label{13}
\end{equation}
and the {\tmstrong{generalized numerical Kodaira dimension}} $\kappa_{\sigma}
(X,  K_X + L, h_L)$ as
\[ \max_{E : \text{a divisor of $X$}} \{\kappa_{\sigma} (X, K_X + L, h_L ;
   E)\} = \max_{k \in \mathbb{N}} \{\kappa_{\sigma} (X, K_X + L,h_L ; k A)\}, \]
where $\kappa_{\sigma} (X, K_X + L, h_L ; E)$ is defined to be
\begin{equation}
  \max \{\nu \in \mathbb{Z} \bigcup\{-\infty\} ; \limsup_{k \rightarrow \infty} \frac{h^0 (X, (k
  k_0 (K_X + L) + E) \otimes \mathcal{I}_{k k_0} (h_L))}{k^{\nu}} > 0\} .
\end{equation}
Here the $k k_0$-multiplier ideal sheaves $\mathcal{I }_{k k_0} (h_L)$ are
precisely defined as in (\ref{ddjo}).

The definition of generalized Kodaira dimensions with multiplier ideal
sheaves is motivated by the study of the graded subalgebra $S=\bigoplus_{k\in\mathbb{N}}S_k \subset R (X,
K_X + L)$ with $S_k=H^0(X,k k_0(K_X +L)\otimes\mathcal{I}_{k k_0}(h_L))$ (cf. (\ref{notj})), according to the very basic relationship
$\mathcal{I}_k (h_L) \mathcal{I}_l (h_L) \subset \mathcal{I}_{k + l} (h_L)$ (which is due to H\"{o}lder inequality)
for $k, l \in \mathbb{N}$.

\subsection{Subadditivity of generalized numerical Kodaira dimension}
The following result obtained by O. Fujino is mainly motivated by the study of the subadditivity of the logarithmic Kodaira dimension of algebraic varieties, which is denoted by Conjecture $\overline{C}_{n,m}$ (see its precise form in e.g. Conjecture 1.2.2 in \cite{fuji-iitaka}). Note that Conjecture $C_{n,m}$ is a special case of Conjecture $\overline{C}_{n,m}$.

\begin{theorem}
  \label{thm1.1}{\tmem{(Theorem 1.3 in {\cite{fujino1}} or Theorem 2.1 in
  {\cite{fujino2}})}} Let $f : X \rightarrow Y$ be an algebraic
  fiber space between two projective manifolds $X$ and $Y$. Let $D_X$
  {\tmem{(resp. $D_Y$)}} be a simple normal crossing divisor (see Definition 9.1.7 in \cite{Larbook}) on $X$
  {\tmem{(resp. $Y$)}}. Assume that $f^{\ast} D_Y \subset D_X$, then
  \begin{equation}
    \kappa_{\sigma} (X, K_X + D_X) \geq \kappa_{\sigma} (F, K_F + D_F) +
    \kappa (Y, K_Y + D_Y) \label{spc}
  \end{equation}
  and
  \begin{equation}\label{spck}
    \kappa_{\sigma} (X, K_X + D_X) \geq \kappa  (F, K_F + D_F) +
    \kappa_{\sigma} (Y, K_Y + D_Y),
  \end{equation}
  where $F$ is a sufficiently general fiber of $f : X \rightarrow Y$.
\end{theorem}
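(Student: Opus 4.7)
The plan is to derive Theorem~\ref{thm1.1} as a specialization of the abstract subadditivity inequalities for the generalized numerical Kodaira dimension $\kappa_{\sigma}(X, K_X + L, h_L)$ attached to a pseudo-effective pair $(L, h_L)$, which is the main analytic result this paper establishes.

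First, I would prove the abstract version: for an algebraic fiber space $f : X \to Y$ with sufficiently general fiber $F$ and a pseudo-effective pair $(L, h_L)$ on $X$,
\begin{align*}
\kappa_{\sigma}(X, K_X + L, h_L) &\geq \kappa_{\sigma}(F, K_F + L|_F, h_L|_F) + \kappa(Y, K_Y + M, h_M),\\
\kappa_{\sigma}(X, K_X + L, h_L) &\geq \kappa(F, K_F + L|_F, h_L|_F) + \kappa_{\sigma}(Y, K_Y + M, h_M),
\end{align*}
where $(M, h_M)$ is the pair on $Y$ built from the twisted direct images $f_{\ast}(\mathcal{O}_X(m k_0 (K_{X/Y} + L)) \otimes \mathcal{I}_{m k_0}(h_L))$. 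The proof combines Ohsawa-Takegoshi type $L^2$-extension to lift sections from $F$, curvature positivity of twisted direct images (Berndtsson-Paun, Cao-Paun, Hacon-Popa-Schnell) to descend positivity to $Y$, and an ample perturbation $E = mA$ with $m \to \infty$ to pass from Iitaka dimension to numerical dimension.

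Second, I would specialize to Fujino's log setting by selecting $L = \mathcal{O}_X(D_X)$ together with an appropriate singular Hermitian metric $h_L$ encoding the SNC divisor $D_X$. Because $D_X, D_Y$ are simple normal crossing and $f^{\ast} D_Y \subset D_X$, the associated multiplier ideals $\mathcal{I}_k(h_L)$ and the induced pulled-back and pushed-forward pairs have explicit descriptions; the adjunction $K_F + D_F = (K_X + D_X)|_F$ on a sufficiently general fiber then identifies all three generalized (numerical) Kodaira dimensions with their classical log analogues. The twisted pushforward $(M, h_M)$ on $Y$ is matched with $\mathcal{O}_Y(D_Y)$ equipped with its divisorial singular metric (up to a positively-curved twist invisible to $\kappa_{\sigma}$ and $\kappa$), thereby turning the abstract inequalities into (\ref{spc}) and (\ref{spck}).

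The main obstacle is the abstract step, specifically controlling how $\kappa_{\sigma}$ transforms under restriction to a fiber and descent to the base. The Iitaka-dimension version follows from Ohsawa-Takegoshi machinery in the spirit of Zhou-Zhu; promoting it to $\kappa_{\sigma}$ forces one to handle the ample perturbation $mA$ uniformly in $k$ and to track numerical growth rates rather than mere non-vanishing of section spaces, for which the Okounkov body description of $\kappa_{\sigma}$ developed in Section~\ref{sect5} is essential. A secondary technical point is that $f^{\ast} h_{D_Y}$ has poles along $f^{\ast} D_Y \subset D_X$ whose multiplicities need not match those of $D_X$, so the comparison of multiplier ideal sheaves across $X$, $Y$ and $F$ requires careful bookkeeping in the specialization step.
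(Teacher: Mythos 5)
Your specialization step has a genuine gap, and it stems from choosing $L = \mathcal{O}_X(D_X)$ with a singular metric adapted to $D_X$. With such a choice, the multiplier ideal sheaves $\mathcal{I}_k(h_L)$ in (\ref{13}) and the definition of $\kappa_{\sigma}(X, K_X + L, h_L)$ impose genuine vanishing conditions along $D_X$, so $H^0(X, kk_0(K_X + D_X) \otimes E \otimes \mathcal{I}_{kk_0}(h_L))$ is in general a \emph{proper} subspace of $H^0(X, kk_0(K_X + D_X) \otimes E)$. Therefore the abstract inequality would bound $\kappa_{\sigma}(X, K_X + D_X, h_L)$ from below, which is a priori \emph{smaller} than the target $\kappa_{\sigma}(X, K_X + D_X)$; that gives an inequality in the wrong direction to conclude (\ref{spc}). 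You note this as a "secondary bookkeeping" issue, but it is the main obstruction: the generalized $\kappa_{\sigma}$ with a divisorial metric and the log $\kappa_{\sigma}$ are not the same object, and there is no singular metric on $\mathcal{O}_X(D_X)$ with semi-positive curvature that makes all $\mathcal{I}_k(h_L)$ trivial. A further difficulty is your abstract base term $\kappa(Y, K_Y + M, h_M)$ with $M$ built from twisted direct images: the paper only establishes the abstract inequality with $\kappa(Y, K_Y)$ on the base (Theorem \ref{mainthm}), not with an extra twist by a pushforward line bundle, and it is unclear how you would pass from your $M$ to $\mathcal{O}_Y(D_Y)$, which is an independent datum unrelated to the positivity of direct images.

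The paper's route (Remark \ref{alter}) sidesteps both issues at once. The key choice is $L = \mathcal{O}_X(D_X - f^{\ast}D_Y)$, not $L = \mathcal{O}_X(D_X)$. Since $f^{\ast}D_Y \subset D_X$, the divisor $D_X - f^{\ast}D_Y$ is effective and SNC, and
\[
K_{X/Y} + L = (K_X + D_X) - f^{\ast}(K_Y + D_Y),
\]
so the pushforward in Lemma \ref{addti} automatically produces the $K_Y + D_Y$ twist on the base without any auxiliary $(M, h_M)$. And rather than carrying multiplier ideal sheaves attached to a singular metric on $L$, the paper uses Lemma \ref{lemma111}, a metric-free extension statement that works with the line bundle $L = \mathcal{O}_X(D_X - f^{\ast}D_Y)$ directly by adding a small ample $H$, equipping $L + \tfrac{1}{k}H$ with a metric whose multiplier ideal sheaf is trivial precisely because $D_X - f^{\ast}D_Y$ is SNC, and then applying the $L^2$ extension theorem. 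This gives generic global generation of $\mathcal{O}_Y(G) \otimes f_{\ast}\mathcal{O}_X(kK_{X/Y} + k(D_X - f^{\ast}D_Y) + H)$ with no $\mathcal{I}_k(h_L)$ factor at all, and the count $h^0(X, k(K_X + D_X) + H + f^{\ast}(NG)) \geq h^0(F, k(K_F + D_F) + H|_F)\cdot h^0(Y, k(K_Y + D_Y) + (N-1)G)$ then follows from Lemma \ref{addti}, after which the asymptotics are handled by the elementary Lemma \ref{simple} rather than Okounkov bodies. To repair your proposal you would essentially have to rediscover the relative decomposition $L = D_X - f^{\ast}D_Y$ and replace the multiplier-ideal version of the extension lemma with one that produces honest $\mathcal{O}_X$-sections.
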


Thus Conjecture $\overline{C}_{n,m}$ follows immediately from the generalized abundance conjecture (cf. Conjecture 2.10 in \cite{fujino1}), which implies that
$$  \kappa(X, K_X+D_X)=\kappa_{\sigma}(X,K_X +D_X)  $$ whenever $D_X$ is a simple normal crossing divisor on the projective manifold $X$.

Our first goal will be to generalize the above theorem with multiplier ideal sheaves involved. To this end, let us recall that
the key ingredient of the original proof of Theorem \ref{thm1.1} gives a
generically globally generated proposition of direct image sheaves (cf.
Corollary 3.7 in {\cite{fujino1}} or Theorem 3.35 of Chapter V in
{\cite{nakayama}}). His proof of this property is based on Nakayama's theory
of $\omega$-sheaves, which is closely related to Viehweg's covering trick and
weak positivity ({\cite{viehweg1}}, {\cite{viehweg2}}).

To convert this important (general) global generation property into a purely analytic statement (e.g. see Lemma \ref{lemma11} and Lemma \ref{lemma111}), we present
the following theorem, which is an application of Ohsawa-Takegoshi $L^2$
extension theorem on weakly pseudoconvex K\"{a}hler manifolds (cf. Theorem \ref{sdf} and Remark \ref{refer}). One might also compare it with Siu's uniform global
generation formula on pseudoeffective line bundles (cf. Chapter 6.E in {\cite{demailly-book-hep}}).

\begin{theorem}
  \label{ne}Let $f : X \rightarrow Y$ be an algebraic fiber space between two
  projective manifolds $X$ and $Y$, $L \rightarrow X$ a line bundle equipped
  with a singular hermitian metric $h_L$ whose curvature current is semi-positive.
  Then there exist two ample line bundles $G \rightarrow Y$ and $H \rightarrow
  X$ such that
  \begin{equation}
   \mathcal{F}_{1}:= \mathcal{O}_Y (G) \otimes f_{\ast} \mathcal{O }_X ((k K_{X / Y} + k L)
    \otimes \mathcal{I}_k (h_L))
  \end{equation}
 is sufficiently generically globally generated and
 \begin{equation}
    \mathcal{F}_{2}:= \mathcal{O}_Y (G) \otimes f_{\ast} \mathcal{O }_X (k K_{X / Y} + k L +
    H)
  \end{equation}
  is generically globally generated for any integer $k \geq 1$.
\end{theorem}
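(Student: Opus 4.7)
The plan is to deduce Theorem \ref{ne} from the Ohsawa-Takegoshi $L^2$ extension theorem on weakly pseudoconvex K\"{a}hler manifolds (Theorem \ref{sdf}), used to extend sections from a general fibre $F = f^{-1}(y_0)$ and then packaged via the projection formula into global sections of the direct image over $Y$.

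For the set-up, I would fix a very ample line bundle $A$ on $Y$ and take $G := K_Y + (\dim Y + 1)A$, a Castelnuovo-Mumford-style choice guaranteeing that $G$ is very ample with positivity independent of $k$; choose $H \to X$ sufficiently ample as well. To produce generic global generation of $\mathcal{F}_i$ at a general $y_0 \in Y$ it suffices, by the projection formula, to show that every section on $F$ (with the appropriate $L^2$/multiplier-ideal integrability) extends to a global section on $X$ after the twist $f^*G$ is added, since one then reads such extensions as global sections of $\mathcal{F}_i$ whose restrictions generate $(\mathcal{F}_i)_{y_0}$ once we further multiply by sections of $G$ nonvanishing at $y_0$.

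For $\mathcal{F}_1$: given $\sigma \in H^0(F, (kK_{X/Y}+kL)|_F \otimes \mathcal{I}_k(h_L|_F))$, decompose
\[
kK_{X/Y} + kL = K_{X/Y} + \bigl[(k-1)(K_{X/Y}+L) + L\bigr],
\]
and equip the twist $L_k := (k-1)(K_{X/Y}+L) + L$ with a semi-positively curved singular metric built from the relative Bergman-kernel (direct-image) metric on $(k-1)(K_{X/Y}+L)$ and $h_L$ on $L$; semi-positivity comes from Berndtsson-type positivity of direct images with multiplier ideals. Siu's generic restriction theorem for multiplier ideals ensures that, on a sufficiently general fibre, this metric has multiplier ideal $\mathcal{I}_k(h_L|_F)$, so $\sigma$ is $L^2$-integrable on $F$. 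Applying Theorem \ref{sdf} with the further twist by $f^*G$ then extends $\sigma$ to
\[
\tilde{\sigma} \in H^0\bigl(X, (kK_{X/Y}+kL+f^*G)\otimes \mathcal{I}_k(h_L)\bigr);
\]
the projection formula identifies this group with $H^0(Y, \mathcal{F}_1)$. For $\mathcal{F}_2$, the same scheme applies with $H$ in place of the ideal sheaf: endow $L_k + H$ with the smooth, strictly positive metric $h_H$ combined with $h_L^k$, and Theorem \ref{sdf} extends the (automatically $L^2$-bounded) sections from $F$ to $X$ without any multiplier-ideal obstruction.

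The main technical obstacle is twofold. First, one must verify that the direct-image / Bergman-kernel metric on $(k-1)(K_{X/Y}+L)$ really is semi-positive and that its restriction to a generic fibre contributes no multiplier ideal beyond $\mathcal{I}_{k-1}(h_L|_F)$; this rests on positivity theorems for twisted direct images together with Siu's restriction theorem, both of which are delicate when $h_L$ has arbitrary singularities. Second, the line bundles $G$ and $H$ are forced to be chosen independently of $k$, so the ample twist $G$ must enforce the hypothesis of Theorem \ref{sdf} for every $k \geq 1$ \emph{simultaneously} --- this is exactly the Castelnuovo-Mumford content of the choice $G = K_Y + (\dim Y + 1)A$. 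Upgrading the argument to the \textbf{sufficiently} generically globally generated conclusion claimed for $\mathcal{F}_1$ (as opposed to mere generic global generation) will require some additional bookkeeping of how many of these extended sections one needs uniformly in $k$.
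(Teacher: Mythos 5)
Your overall strategy (extension from a general fibre via an Ohsawa--Takegoshi theorem, then packaging via the projection formula) matches the paper's approach, and the decomposition $kK_{X/Y}+kL = K_{X/Y} + [(k-1)(K_{X/Y}+L)+L]$ is exactly what Lemma~\ref{lemma11} does in the form $P = (kk_0-1)(K_{X/Y}+L) + L + f^\ast(G-K_Y)$. However, there are two gaps that your sketch does not close, both concerning the relationship between the Nadel multiplier ideal that Ohsawa--Takegoshi produces and the $k$-multiplier ideal $\mathcal{I}_k(h_L)$ that actually appears in $\mathcal{F}_1$.

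First, you say the metric on $(k-1)(K_{X/Y}+L)$ is built from ``the relative Bergman-kernel (direct-image) metric,'' but it is crucial that the metric be the \emph{$k$-Bergman} metric of Zhou--Zhu (Theorem~\ref{zhouzhu}), specifically $B_{k,X/Y}^{-(k-1)/k}$, and not the ordinary direct-image (i.e.\ $1$-Bergman) metric nor any of its powers. The $k$-Bergman metric is defined from $(\int_{X_y}|u|^{2/k}h_L)^{k/2}$-norms, so that by construction a section $s$ of $k(K_{X_y}+L|_{X_y})$ with $\int_{X_y}|s|^{2/k}h_L < \infty$ automatically satisfies $\int_{X_y}|s|^2 B_{k,X/Y}^{-(k-1)/k}h_L < \infty$; this is Proposition~\ref{relation}(a), and it is a simple consequence of the definition, not an application of ``Siu's generic restriction theorem for multiplier ideals.'' If you take a different metric, this $L^2$ integrability on the fibre does not follow formally.

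Second --- and this is the gap that would actually derail the proof --- Ohsawa--Takegoshi gives an extended section in $H^0(X, (kK_{X/Y}+kL+f^\ast G)\otimes \mathcal{I}(h_P))$ where $h_P = B_{k,X/Y}^{-(k-1)/k}\cdot h_L \cdot f^\ast h_{G-K_Y}$, but $\mathcal{F}_1$ involves the $k$-multiplier ideal $\mathcal{I}_k(h_L)$, and $\mathcal{I}(h_P)\subset\mathcal{I}_k(h_L)$ is not automatic. The paper proves this inclusion as Proposition~\ref{relation}(b), using a non-trivial Fubini and H\"older argument together with the finiteness of $\int_{X_y}B_{k,y}^{1/k}h_L$ (equation~(\ref{224})). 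Your sketch has no counterpart to this step, so the extended section is not shown to land in the sheaf $\mathcal{F}_1$.

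Finally, a smaller point: the reason a single $G$ works for all $k$ simultaneously is not a Castelnuovo--Mumford regularity phenomenon. The proof constructs, for each generic $y\in Y$, a quasi-psh $\psi$ with an isolated log pole at $y$ (equation~(\ref{podq})), and by compactness of $Y$ the curvature $i\partial\bar\partial\psi$ has a \emph{uniform} lower bound $-C_3\omega$ independent of $y$ and of $k$. One then simply takes $G$ ample enough that $G-K_Y$ carries a metric with curvature $\geq (1+\alpha)C_3\omega$. The twist $G$ appears only through the curvature hypothesis~(\ref{dddd}) of the extension theorem, not through any cohomological vanishing. For $\mathcal{F}_2$, the paper again uses the $k$-Bergman metric (Lemma~\ref{lemma111}), now applied to the perturbed $\mathbb{Q}$-line bundle $L+\tfrac1kH$, so the mechanism is the same; your proposal's phrase ``endow $L_k + H$ with the smooth, strictly positive metric $h_H$ combined with $h_L^k$'' likewise omits the essential role of the relative $k$-Bergman metric in producing a semi-positive fibrewise metric that makes the Ohsawa--Takegoshi hypotheses hold.
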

Here by saying a coherent sheaf $\mathcal{F}$ is ``\emph{generically globally generated}" (resp. ``\emph{sufficiently generically globally generated}"), we mean $\mathcal{F}$ globally generates its stalks on a Zariski open dense subset (resp. on a subset whose complement has measure zero).

Theorem \ref{ne} will be obtained through two extension statements: Lemma
\ref{lemma11} with multiplier ideal sheaves and Lemma \ref{lemma111} without multiplier ideal sheaves. As an immediate
application of Theorem \ref{ne} (or more precisely, Lemma \ref{lemma11}), we obtain an analogue of Theorem
\ref{thm1.1} for the subadditivity of generalized numerical Kodaira
dimensions.

\begin{theorem}[\textbf{Main Theorem 1}]
  \label{mainthm} Let $f : X \rightarrow Y$ be an algebraic
  fiber space between two projective manifolds $X$ and $Y$. Let L be a
  $\mathbb{Q}$-line bundle on $X$ equipped with a singular metric $h_L$
  whose curvature current is semi-positive. Then inequality
  \begin{equation}
    \kappa_{\sigma} (X, K_X + L, h_L) \geq \kappa_{\sigma} (F, K_F + L_F, h_L
    |_F) + \kappa (Y, K_Y) \label{112}
  \end{equation}
  and
  \begin{equation}
     \kappa_{\sigma} (X, K_X + L, h_L) \geq \kappa (F, K_F + L_F, h_L
    |_F) + \kappa _{\sigma}(Y, K_Y) \label{112k}
  \end{equation}
  holds, where $F$ is a sufficiently general fiber of $f : X \rightarrow Y$.
\end{theorem}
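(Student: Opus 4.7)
The plan is to adapt the Fujino-type proof of Theorem \ref{thm1.1}, with Theorem \ref{ne} replacing the $\omega$-sheaf generic global generation from \cite{nakayama}. I will prove inequality (\ref{112}) in detail; inequality (\ref{112k}) will follow by an essentially symmetric argument. Set $\nu_F := \kappa_\sigma(F, K_F+L_F, h_L|_F)$ and $\nu_Y := \kappa(Y, K_Y)$; by definition of these dimensions there exist a divisor $E_F$ on $F$ and constants $c_F, c_Y > 0$ such that
\[ h^0(F, (kk_0(K_F+L_F) + E_F) \otimes \mathcal{I}_{kk_0}(h_L|_F)) \geq c_F\, k^{\nu_F}, \quad h^0(Y, kk_0 K_Y) \geq c_Y\, k^{\nu_Y} \]
for infinitely many $k$. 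Using the identity $K_X = K_{X/Y} + f^* K_Y$, it suffices to produce one divisor $E$ on $X$ and a constant $c > 0$ such that $h^0(X, (kk_0(K_X+L) + E) \otimes \mathcal{I}_{kk_0}(h_L)) \geq c\, k^{\nu_F + \nu_Y}$ for the same sequence of $k$.

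The next step will be to apply Theorem \ref{ne} (to the line bundle $k_0 L$ with metric $h_L^{k_0}$) to obtain ample line bundles $G \to Y$ and $H \to X$ for which $\mathcal{F}_1 = \mathcal{O}_Y(G) \otimes f_*((k K_{X/Y} + kk_0 L) \otimes \mathcal{I}_{kk_0}(h_L))$ is sufficiently generically globally generated for every $k \geq 1$. After possibly replacing $H$ by a large multiple, I may assume that $H|_F - E_F$ is effective on the sufficiently general fiber $F$, so multiplication by a fixed nonzero section of $\mathcal{O}_F(H|_F - E_F)$ injects the $E_F$-twisted fiber space into $H^0(F, (kk_0(K_F+L_F) + H|_F) \otimes \mathcal{I}_{kk_0}(h_L|_F))$. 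At $y = f(F)$, the sufficient generic global generation of $\mathcal{F}_1$ combined with the multiplier-ideal Ohsawa-Takegoshi extension encoded in Lemma \ref{lemma11} will furnish a surjection
\[ H^0(X, (kk_0 K_{X/Y} + kk_0 L + f^*G + H) \otimes \mathcal{I}_{kk_0}(h_L)) \twoheadrightarrow H^0(F, (kk_0(K_F+L_F) + H|_F) \otimes \mathcal{I}_{kk_0}(h_L|_F)), \]
so the domain has dimension at least $c_F k^{\nu_F}$ for the relevant $k$.

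To produce the additional factor $k^{\nu_Y}$ I will multiply by pulled-back sections from $Y$: for each $s$ in the above domain and each $\tau \in H^0(Y, kk_0 K_Y)$, the product $s \cdot f^*\tau$ lies in $H^0(X, (kk_0(K_X+L) + f^*G + H) \otimes \mathcal{I}_{kk_0}(h_L))$. A standard Iitaka-fibration linear independence argument --- picking $\nu_Y+1$ sufficiently general fibers over points whose images under the linear system $|kk_0 K_Y|$ are in general position and evaluating the products simultaneously --- then yields $h^0(X, (kk_0(K_X+L) + f^*G + H) \otimes \mathcal{I}_{kk_0}(h_L)) \geq c\, k^{\nu_F + \nu_Y}$. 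Setting $E := f^*G + H$ proves (\ref{112}). For (\ref{112k}) one exchanges the roles of $F$ and $Y$: pick $E_Y$ on $Y$ realising $\kappa_\sigma(Y, K_Y)$, start from sections witnessing $\kappa(F, K_F+L_F, h_L|_F)$ (no fiber divisor needed), and absorb $f^* E_Y$ into the final divisor on $X$.

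The hard part will be the lifting in the second paragraph: one must simultaneously guarantee that sections on the general fiber $F$ lying in $\mathcal{I}_{kk_0}(h_L|_F)$ extend to sections on $X$ lying in the correct multiplier ideal $\mathcal{I}_{kk_0}(h_L)$ (rather than a weaker one), and that the fiber divisor $E_F$ witnessing $\kappa_\sigma$ on $F$ can be dominated by the restriction of a globally defined ample twist. These are precisely the two analytic points that Theorem \ref{ne}, together with Lemma \ref{lemma11}, is designed to handle, so the remainder of the proof reduces to the classical Fujino-Nakayama-type counting.
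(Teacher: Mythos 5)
Your plan follows the paper's own strategy almost exactly: Theorem \ref{ne} / Lemma \ref{lemma11} for the sufficient generic global generation of the twisted direct image, followed by a fiber-by-fiber linear independence count to get the $h^0(Y,kk_0K_Y)$ (resp. $\kappa_\sigma(Y,K_Y)$) factor, plus the observation that the ample $H|_F$ eventually dominates any fiber divisor witnessing $\kappa_\sigma(F,\cdot)$ (the paper packages this last point as Lemma \ref{simple}, where you instead absorb $E_F$ into a high power of $H$; these are interchangeable). The symmetric treatment of (\ref{112k}) by moving the ample twist from $X$ to $Y$ also matches the paper.

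One concrete slip: in your counting step you say to pick $\nu_Y+1$ sufficiently general fibers; to make the linear independence argument (the paper's Lemma \ref{addti}) actually deliver the factor $k^{\nu_Y}$ you must pick $q := h^0(Y,kk_0K_Y) \sim c_Y k^{\nu_Y}$ general fibers, not a fixed number of order $\nu_Y+1$. The product sections $\{\xi_i\sigma_j\}$ number $q\cdot\operatorname{rk}\mathcal{F}$ and one needs $q$ points to invert the $q\times q$ matrix $[\tilde\xi_i(y_k)]$; picking only $\nu_Y+1$ fibers would cap the bound at $O(k^{\nu_F})$ and the proof would fail. Aside from this misstatement (which you presumably intended to be the standard argument anyway), the proposal is sound and is essentially the paper's proof.
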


By ``{\tmem{sufficiently general}}'', we mean this inequality holds for $F =
X_y$ when $y$ varies outside a measure zero subset of $Y$. In terms of ({\ref{112k}), we will see from its proof that the left hand side $\kappa_{\sigma}$ can be optimized by a smaller number, which is defined to be the generalized numerical Kodaira dimension along the horizontal direction, i.e.
\begin{equation}\label{jiangluo}
\kappa_{\sigma,f,\text{hor}}:= \max_{E : \text{a divisor of $Y$}} \{\kappa_{\sigma} (X, K_X + L, h_L ;
   f^{\ast}E)\}\leq\kappa_{\sigma}.
\end{equation}

Let us remark that $\kappa_{\sigma} (F, K_F + L |_F, h_L |_F) \nobracket
\nobracket$ is invariant for sufficiently general $F.$
Indeed, for any $k, l \in \mathbb{N}$, there exists Zariski open
dense subsets $Y_{k, l} \subset Y$ such that $h^0 (X_y, k k_0 (X_y +
L|_{X_y}) \otimes l A \otimes \mathcal{I}_{k k_0} (h_L) |_{X_y})$ is invariant
when $y \in Y_{k, l}$ (e.g. Theorem 10.7 in \cite{bertin}). As $\mathcal{I}_{k k_0} (h_L) |_{X_y} = \mathcal{I}_{k
k_0} (h_L |_{X_y})$ holds for $y \in \Sigma_k$, where $Y \backslash \Sigma_k$ is a
subset with zero measure, it follows that $\kappa_{\sigma} (F, K_F + L_F, h_L
|_F)$ is constant on $\bigcap_{k, l = 1}^{\infty} (Y_{k, l}  \bigcap
\Sigma_k)$. Note also by almost the same argument, $\kappa (F, K_F + L|_F, h_L {|_F} )$ does not vary for almost all $F$.
One can refer to Remark 1.1 in {\cite{zhou-zhu1}} or Section \ref{sec2.1} for
further explanations of this result.

At the end of the proof of Theorem \ref{mainthm}, we will
provide an analytic approach towards
Theorem \ref{thm1.1} (cf. Remark \ref{alter}), which can be respectively viewed as an application of Theorem \ref{ne} (or more precisely, Lemma \ref{lemma111}).

\subsection{Subadditivity of generalized Kodaira dimension}

The second goal of this paper is to give another interpretation of the
subadditivity of generalized Kodaira dimension, which is mainly motivated by a
recent result of Zhou and Zhu.

\begin{theorem}
  {\tmem{\label{01.1}(Theorem 1.1 in {\cite{zhou-zhu1}})}} Let $f :
  X \rightarrow Y$ be an analytic fiber space between a compact K{\"a}hler
  manifold $X$ and a connected compact complex manifold $Y$. Let $L
  \rightarrow X$ be a  $\mathbb{Q}$-line bundle equipped with a singular metric $h_L$
  with semi-positive curvature current. Assume that $Y$ has general type, then
  inequality
  \begin{equation}\label{xunzhao}
    \kappa (X, K_X + L, h_L) \geq \kappa (F, K_F + L |_F, h_L |_F) + \dim Y
    \nobracket \nobracket
  \end{equation}
  holds, where $F$ denotes the sufficiently general fiber of $f$.
\end{theorem}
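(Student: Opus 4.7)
The plan is to upgrade the argument behind Theorem~\ref{mainthm} so that, under the stronger hypothesis that $Y$ is of general type, the ``auxiliary ample twist'' on $X$ (which is responsible for the $\kappa_{\sigma}$ appearing in Main Theorem 1) gets replaced by pullbacks of pluricanonical sections from $Y$. These latter sections are abundant enough, thanks to the general-type hypothesis, to promote $\kappa_{\sigma}$ to $\kappa$ on the left-hand side. When $Y$ is not already projective, first note that $Y$ is Moishezon (it lies in Fujiki's class $\mathcal{C}$, being dominated by the compact K\"ahler manifold $X$, and has $\kappa(Y)=\dim Y$), so after a bimeromorphic base change we reduce to the case where $Y$ is projective and Theorem~\ref{ne} applies.

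By Theorem~\ref{ne} there exists an ample line bundle $G$ on $Y$ such that for every $k\geq 1$
$$\mathcal{F}_1^{(k)}:=\mathcal{O}_Y(G)\otimes f_{\ast}\bigl(\mathcal{O}_X(k(K_{X/Y}+L))\otimes\mathcal{I}_k(h_L)\bigr)$$
is sufficiently generically globally generated. Using $\mathcal{I}_k(h_L)|_{F_y}=\mathcal{I}_k(h_L|_{F_y})$ for $y$ outside a measure-zero set, the generic rank of $\mathcal{F}_1^{(k)}$ equals $r_k:=h^0(F,k(K_F+L|_F)\otimes\mathcal{I}_k(h_L|_F))$. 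I will extract global sections $t_1,\ldots,t_{r_k}$ of $\mathcal{F}_1^{(k)}$ whose values at a very general point of $Y$ form a basis of the fibre; via the projection formula each $t_j$ is a section of $\mathcal{O}_X(k(K_{X/Y}+L)+f^{\ast}G)\otimes\mathcal{I}_k(h_L)$, and the restrictions $t_j|_{F_y}$ are linearly independent in $H^0(F_y,k(K_{F_y}+L|_{F_y})\otimes\mathcal{I}_k(h_L|_{F_y}))$ for $y$ ranging over a Zariski-open dense subset $U_k\subset Y$.

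Because $Y$ is of general type, $h^0(Y,kK_Y-G)\geq c\,k^{\dim Y}$ for some $c>0$ and all sufficiently large and divisible $k$. Choose $N_k\sim k^{\dim Y}$ linearly independent sections $s_1,\ldots,s_{N_k}\in H^0(Y,kK_Y-G)$. Using $K_X=K_{X/Y}+f^{\ast}K_Y$, the products $f^{\ast}(s_i)\cdot t_j$ lie in $H^0(X,k(K_X+L)\otimes\mathcal{I}_k(h_L))$ (multiplication by a holomorphic section of a line bundle preserves the $L^2$ condition defining the multiplier ideal). Linear independence follows from a standard fibrewise argument: if $\sum_{i,j}c_{ij}f^{\ast}(s_i)t_j=0$, restricting to a generic $F_y$ yields $\sum_j\bigl(\sum_i c_{ij}s_i(y)\bigr)t_j|_{F_y}=0$; linear independence of $\{t_j|_{F_y}\}$ for $y\in U_k$ forces $\sum_i c_{ij}s_i\equiv 0$ on $Y$, hence all $c_{ij}=0$ by the choice of the $s_i$.

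We therefore obtain $h^0(X,k(K_X+L)\otimes\mathcal{I}_k(h_L))\geq N_k\cdot r_k\gtrsim k^{\dim Y}\cdot r_k$. Writing $\kappa_F:=\kappa(F,K_F+L|_F,h_L|_F)$, the definition of $\kappa_F$ via a $\limsup$ produces a sequence $k_\ell\to\infty$ along which $r_{k_\ell}\gtrsim k_\ell^{\kappa_F}$; combined with the previous inequality, this gives $h^0(X,k_\ell(K_X+L)\otimes\mathcal{I}_{k_\ell}(h_L))\gtrsim k_\ell^{\kappa_F+\dim Y}$, whence $\kappa(X,K_X+L,h_L)\geq \kappa_F+\dim Y$ as desired. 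The main technical obstacle will be to verify the fibrewise identity $\mathcal{I}_k(h_L)|_{F_y}=\mathcal{I}_k(h_L|_{F_y})$ on a set of full measure (which should follow from the Ohsawa--Takegoshi type methods underlying Lemma~\ref{lemma11}); a minor secondary issue is to align the defining integers $k_0$ appearing in the Iitaka-type $\limsup$'s on $X$, $Y$ and $F$, routinely handled by restricting to a common sufficiently divisible arithmetic progression.
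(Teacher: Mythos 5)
Your core argument is correct and it takes a genuinely different route from the paper's own proof of the special case (Corollary~\ref{dio}). The paper reproves Theorem~\ref{01.1} only under the extra hypotheses that $Y$ is projective and that $h_L$ has analytic singularities, and its method is the generalized Iitaka fibration machinery (Theorem~\ref{below}, Lemma~\ref{lem4.4}) built on the Okounkov-body identification $\kappa^{(1)}=\kappa^{(2)}=\kappa^{(3)}$; that approach actually yields an equality but cannot dispense with the analytic-singularities assumption because it relies on the explicit computation of $\mathcal{I}_k(f^{\flat\ast}\varphi_L)$ in (\ref{uski}). You instead recycle the section-counting strategy of Theorem~\ref{mainthm}: apply the extension result and Lemma~\ref{addti}, but with the decisive modification of taking $D_Y = kK_Y - G$ rather than $D_Y = kk_0 K_Y + (N-1)G$. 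Bigness of $K_Y$ guarantees $h^0(Y,kK_Y-G)\gtrsim k^{\dim Y}$, and the $-G$ exactly cancels the $f^{\ast}G$ carried by the sections of $\mathcal{F}_1$, so the products $f^{\ast}(s_i)\cdot t_j$ land in $H^0(X,k(K_X+L)\otimes\mathcal{I}_k(h_L))$ with no residual twist — this is precisely what promotes $\kappa_\sigma$ to $\kappa$ on the left. Your linear-independence verification is a condensed version of the proof of Lemma~\ref{addti} and is fine; the $k_0$-divisibility bookkeeping is indeed routine. Your argument is closer in spirit to Zhou and Zhu's original proof (extension plus counting) than to the paper's Corollary~\ref{dio}, and it has the merit of not needing analytic singularities.

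There is, however, one genuine gap in generality. You invoke Theorem~\ref{ne}, which is stated for $f$ an algebraic fiber space between two \emph{projective} manifolds, whereas the statement allows $X$ merely compact K\"ahler and $Y$ an arbitrary connected compact complex manifold. For $X$ K\"ahler you should cite Lemma~\ref{lemma11} directly rather than Theorem~\ref{ne}. More seriously, Lemma~\ref{lemma11} requires an ample $G$ on $Y$, which forces $Y$ to be projective. Your reduction to that case — $Y$ is Moishezon because it is of general type, so pass to a projective bimeromorphic model $Y'$ and a K\"ahler model $X'$ of $X\times_Y Y'$ — is plausible, but you would need to verify carefully that (i) $X'\to X$ can be taken to be a composite of smooth blow-ups so that $X'$ remains K\"ahler, (ii) $\kappa(X,K_X+L,h_L)$ is invariant under this modification (Proposition~\ref{oo} covers smooth blow-ups of the total space, but the base change needs a few more words), and (iii) the general fibers of $X'\to Y'$ and $X\to Y$ have the same generalized Kodaira dimension. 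The reason the original Zhou--Zhu proof does not need these reductions is that it controls the negative part of $f^{\ast}\sqrt{-1}\partial\bar\partial\psi$ directly via the positivity of the big line bundle $K_Y$ (as the remark after Lemma~\ref{lemma11} observes), rather than by an auxiliary ample $G$; to make your argument cover the full statement you should either make the base-change step rigorous or replace the role of $G$ by a positively curved singular metric on $K_Y$ itself.
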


\begin{remark}\label{ams}
As a slight improvement to Theorem \ref{01.1}, we can show that (\ref{xunzhao}) is indeed an equality without any singularity restrictions on $h_L$ (cf. Proposition \ref{prop4.3}).
\end{remark}

Note that in \cite{zhou-zhu1} $\kappa (X, K_X + L, h_L)$ is defined (cf. \ref{pkj}) to be the largest order $m\in\mathbb{N}\bigcup\{-\infty\}$, where some subsequence of $\{h^0(X,k(K_X+L)\otimes\mathcal{I}_{k}(h_L))\}_{k=1}^{\infty}$ grows as  $O(k^m), k\rightarrow \infty$. 
 Essentially, Zhou and Zhu's proof of Theorem \ref{01.1} is mainly based on
two ingredients: Demailly's $L^2$ extension theory on weakly pseudoconvex
manifolds ({\cite{dem15}}) and their results on positivity of relative
$k$-Bergman metrics for K{\"a}hler fibrations ({\cite{zhouzhu}}).

However, we will try {\tmstrong{NOT}} to adopt the definition of generalized Kodaira dimension along this direction. Instead, we define $\tilde{\kappa} (X, K_X + L, h_L)$
(cf. (\ref{jkp}))
to be the maximal dimension of the image of the meromorphic Kodaira map
corresponding to the linear system
\begin{equation} \label{coplt}
  |H^0 (X, k k_0 (K_X + L) \otimes \mathcal{I}_{k k_0} (h_L)) |
\end{equation}
(cf. \ref{jkp}) as $k$ varies among all integers. We will show that both $\kappa (X, K_X + L, h_L)$ and  $\tilde{\kappa} (X, K_X + L, h_L)$ coincide with the dimension of an appropriate Newton-Okounkov body (cf. Proposition \ref{eqji}), even when $X$ is possibly non-algebraic, which will be crucial for the improvement of Theorem \ref{01.1} as mentioned in Remark \ref{ams}. Note in the usual setting (i.e. set $(L,h_L)$ to be the trivial line bundle equipped with the complex Euclidean metric), the fact that $\kappa(X,K_X) = \tilde{\kappa} (X,K_X)$ has already been proved in \cite{mori1}; the fact that both $\kappa(X,K_X)$ and $\tilde{\kappa}(X,K_X)$ coincide with an appropriate Newton-Okounkov body in case $X$ is algebraic has been proved in (Theorem 3.3 and 3.4 of) \cite{kk12}.

Before turning to the second main results, let us talk about the singularities
of $h_L$. Recall that $h_L$ is said to be of {\tmstrong{analytic singularities}}, if the local
weight function of $h_L$ has the form $c \cdot\log (|F_1 |^2 + \cdots + |F_N |^2) + O
(1)$, where the $F_j$ are all local holomorphic functions and $c>0$.

Now let us first fix the notation $S_k=H^0(X, k k_0(K_X+ L)\otimes \mathcal{I}_{k k_0}(h_L))$. Then the second main result of this paper, which guarantees the existence of generalized Iitaka fibration when $h_L$ has analytic singularities, can be stated as follows.

\begin{theorem}[=Theorem \ref{below}, \textbf{Main Theorem 2}]\label{newbo}

  \label{bow}Let $X$ be a compact complex manifold and $L \rightarrow X$ be
  a pseudoeffective line bundle equipped with a singular metric $h_L$ whose
  curvature current is semi-positive. Assume that $h_L$ has analytic
  singularities and $\kappa (X, K_X + L, h_L) \geq 0$. Let $f^{\flat} :
  X^{\flat} \rightarrow Y^{\flat}$ be a representative of the Kodaira meromorphic mapping
  \begin{equation}
    f (= \Phi_{|S_k |}) : X \dashrightarrow \tmop{Im} \Phi_{|S_k |}
    \backassign Y
  \end{equation}
  associated to $S_k$ for any $k \in N (X, K_X + L, h_L)$ sufficiently large. Then $f^{\flat} : X^{\flat} \rightarrow
  Y^{\flat}$ is an analytic fiber space, $\kappa(X, K_X + L, h_L) =
  \dim Y^{\flat}$ and $\kappa (F, K_F + L_F, h_L |_F) = 0$, where
  $F $ is the very general (which means, $F=X_y$ when $y$ varies outside a countable union of analytic
  subsets of $Y$) fiber of $f^{\flat}$.

\end{theorem}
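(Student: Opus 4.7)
The plan is to adapt the classical proof of Iitaka's fibration theorem to the multiplier-ideal setting, using the analytic singularity assumption on $h_L$ to reduce, via a log resolution, to a statement about ordinary $\mathbb{Q}$-line bundles.

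First, I would invoke Proposition \ref{eqji} to identify $\kappa(X, K_X + L, h_L)$ with $\tilde\kappa(X, K_X + L, h_L)$, the maximal dimension of $\tmop{Im}(\Phi_{|S_k|})$ as $k$ varies in $N(X, K_X+L, h_L)$. Choose $k$ large enough that this maximum is attained and that $\Phi_{|S_k|}$ stabilizes, in the sense that $\Phi_{|S_{mk}|}$ is birationally equivalent to $\Phi_{|S_k|}$ for every $m\geq 1$; such stabilization should follow from the classical Iitaka-style argument applied to the graded subring $\bigoplus_k S_k$, whose multiplicative structure is given by the inclusions $\mathcal{I}_k(h_L)\cdot\mathcal{I}_l(h_L)\subset \mathcal{I}_{k+l}(h_L)$. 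Resolving the base locus of $|S_k|$ produces a modification $\mu : X^{\flat} \to X$ on which the meromorphic map $\Phi_{|S_k|}\circ\mu$ becomes holomorphic; composing with the Stein factorization yields the analytic fiber space $f^{\flat} : X^{\flat} \to Y^{\flat}$ with connected fibers. The identification $\kappa(X, K_X + L, h_L) = \dim Y^{\flat}$ is then immediate from Proposition \ref{eqji} and the construction.

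The main task is to establish $\kappa(F, K_F + L_F, h_L|_F) = 0$ for a very general fiber $F$. Here the analytic singularity assumption enters crucially: one picks a log resolution $\pi : \tilde X \to X$ on which $\pi^{\ast} h_L$ has singularities supported on a simple-normal-crossing divisor $E$, so that for an appropriate rational $c>0$ one obtains the standard identification
\[
   \mathcal{O}_X\bigl(k(K_X+L)\bigr) \otimes \mathcal{I}_k(h_L) \;\cong\; \pi_{\ast}\,\mathcal{O}_{\tilde X}\!\bigl(k\pi^{\ast}(K_X+L) + K_{\tilde X/X} - \lfloor k c E \rfloor\bigr).
\]
This identification converts the graded ring $\bigoplus_k S_k$ into the section ring of an ordinary $\mathbb{Q}$-line bundle $\tilde M$ on $\tilde X$, thereby reducing the problem to the classical Iitaka fibration theorem for $\mathbb{Q}$-line bundles on $\tilde X$. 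That classical theorem supplies a fibration over a base birational to $Y^{\flat}$ whose general fiber has Iitaka dimension zero. Combining this with the invariance $\mathcal{I}_k(h_L)|_F = \mathcal{I}_k(h_L|_F)$ for $F=X_y$ outside a measure-zero subset of $Y^{\flat}$ (recalled earlier in the introduction), and the compatibility of the log resolution $\pi$ with restriction to a very general fiber, the vanishing descends to $\kappa(F, K_F + L_F, h_L|_F)\leq 0$; the reverse inequality $\kappa\geq 0$ comes from the non-vanishing of the restriction of $|S_k|$ to $F$, which is built into the fact that $f^{\flat}$ is the map defined by $|S_k|$.

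The principal obstacle is this last step: one must simultaneously juggle the log resolution of $h_L$, the birational model $X^{\flat}$, and the restriction to a very general fiber, while preserving the multiplier-ideal twist. The countability inherent in "very general" is essential to absorb the countably many conditions coming from varying $k$ and from the compatibility of $\pi$ with restriction to $X_y$. A secondary subtlety is that the classical Iitaka fibration theorem is usually stated for projective or at least algebraic $X$, whereas here $X$ is only compact complex; this is handled by observing that $Y^{\flat}$ sits in a projective space (being the image of a Kodaira map) and that $X^{\flat}$ is Moishezon over $Y^{\flat}$, so the classical argument applies fiber-by-fiber and globally after birational modification.
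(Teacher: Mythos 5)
Your opening moves match the paper's: invoking Proposition \ref{eqji} to identify the three definitions of $\kappa$, invoking Propositions \ref{pdju} and \ref{alcl} to stabilize $\Phi_{|S_k|}$, obtain the representative $f^\flat:X^\flat\to Y^\flat$ as an analytic fiber space, and conclude $\kappa(X,K_X+L,h_L)=\dim Y^\flat$. The divergence — and the gap — is in the proof that $\kappa(F,K_F+L_F,h_L|_F)=0$.

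Your reduction to ``the classical Iitaka fibration theorem for $\mathbb{Q}$-line bundles'' rests on the asserted identification
\[
\mathcal{O}_X(k(K_X+L))\otimes\mathcal{I}_k(h_L)\cong\pi_*\mathcal{O}_{\tilde X}\bigl(k\pi^*(K_X+L)+K_{\tilde X/X}-\lfloor kcE\rfloor\bigr),
\]
but this formula is incorrect in two ways. First, the push-forward of $k$-canonical forms carries the Jacobian factor to the $k$-th power, so $K_{\tilde X/X}$ should appear as $kK_{\tilde X/X}$. Second, the $k$-multiplier ideal $\mathcal{I}_k(h_L)$ (defined via integrability of $|F|^{2/k}e^{-\varphi_L}$) is \emph{not} the same as $\mathcal{I}(kh_L)$: along an SNC divisor $D=\sum\lambda_j D_j$ with weight $\alpha$, the paper's own formula (\ref{uski}) gives
$\mathcal{I}_k(\pi^*\varphi_L)=\mathcal{O}(-\sum(\lfloor k\alpha\lambda_j\rfloor-k+1)D_j)$,
carrying a $(k-1)\sum D_j$ correction that your formula omits. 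After correcting, $S_k$ is (for suitably divisible $k$) of the form $H^0(\tilde X,\,k\tilde M - D)$ for a fixed effective divisor $D$, not $H^0(\tilde X,k\tilde M)$. Thus $\bigoplus_k S_k$ is a \emph{twisted} graded subring of $R(\tilde X,\tilde M)$, and the inclusion can change both the Iitaka dimension and the Iitaka fibration a priori — one cannot simply quote the classical theorem for $\tilde M$. There is also no provision for the compact non-algebraic case: your claim that $X^\flat$ is ``Moishezon over $Y^\flat$'' is not justified, and even if it were, the classical theorem is not stated in that generality.

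The paper avoids all of this. After resolving the analytic singularities of $f^{\flat*}\varphi_L$ so that $\mathcal{I}_k(f^{\flat*}h_L)$ is \emph{locally free}, it sets $E_l$ to be the divisor of $l(K_{X^\flat}+L^\flat)\otimes\mathcal{I}_l(f^{\flat*}h_L)$ and proves directly that $f^{\flat*}\mathcal{O}_{Y^\flat}(H)\subset\mathcal{O}_{X^\flat}(E_k)$ for very ample $H$, using only that $Q((S))\supset Q(Y^\flat)$. Combined with the multiplicative inclusion $\mathcal{I}_k^{\otimes b}\cdot\mathcal{I}_a\subset\mathcal{I}_{a+kb}$, this gives injections $\mathcal{F}_a\otimes\mathcal{O}(bH)\hookrightarrow\mathcal{F}_{a+kb}$ between direct image sheaves; a commutative diagram and $Q((S))=Q(Y^\flat)$ then force $\operatorname{rank}\mathcal{F}_a\le 1$, whence $\kappa(F,K_F+L_F,h_L|_F)=0$ by the generic identification $\mathcal{I}_a(h_L)|_F=\mathcal{I}_a(h_L|_F)$. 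This is a Mori-style argument carried out entirely at the level of direct images over the projective base $Y^\flat$, which is why it needs no algebraicity of $X$ and no passage to a $\mathbb{Q}$-line bundle's full section ring. If you want to salvage your reduction, you would have to track the fixed twist $-D$ carefully and show it does not change the Iitaka fibration — which essentially reproves the classical theorem in the twisted setting and forfeits the economy you were hoping for.
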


\begin{remark}

Theorem \ref{below} generalizes a classical result on Iitaka fibration (cf. (i) of Theorem 1.11 in \cite{mori1}) with multiplier ideal sheaves involved.

\end{remark}

The key to the proof of Theorem \ref{newbo} is using the fact that $\kappa (X, K_X + L, h_L)=\tilde{\kappa}(X, K_X + L, h_L)$, where $\tilde{\kappa}(X, K_X + L, h_L)$ is defined by (\ref{coplt}). As an immediate application of Theorem \ref{newbo}, we are able to obtain a distinct proof of Theorem \ref{01.1} in the case when $h_L$ has analytic singularities.

\begin{corollary}[a special case of Theorem \ref{01.1}]
  \label{dio} Let $f : X
  \rightarrow Y$ be an analytic fiber space between a compact K{\"a}hler
  manifold $X$ and a projective manifold $Y$. Let $L \rightarrow X$ be a  $\mathbb{Q}$-line bundle equipped with a singular metric $h_L$ with semi-positive
  curvature current. Assume moreover that $h_L$ has
  analytic singularities and $Y$ has
  general type, then
  \begin{equation}
    {\kappa} (X, K_X + L, h_L) = {\kappa} (F, K_F + L |_F, h_L
    |_F) + \dim Y \nobracket \nobracket
  \end{equation}
  holds, where $F$ denotes the sufficiently general fiber of $f$.
\end{corollary}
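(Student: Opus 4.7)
The plan is to apply Theorem \ref{newbo} to $(X, K_X + L, h_L)$ to realize $\kappa(X, K_X + L, h_L)$ as $\dim Y^\flat$ for a generalized Iitaka fibration $f^\flat : X^\flat \rightarrow Y^\flat$, and then exploit the general-type hypothesis on $Y$ to identify $f^\flat$ with a refinement of the given fibration $f$.

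The inequality $\kappa(X, K_X + L, h_L) \leq \dim Y + \kappa(F, K_F + L|_F, h_L|_F)$ is classical easy addition: sections in $S_k := H^0(X, k k_0 (K_X + L) \otimes \mathcal{I}_{k k_0}(h_L))$ restrict, for $F$ outside a measure zero subset (where $\mathcal{I}_{k k_0}(h_L)|_F = \mathcal{I}_{k k_0}(h_L|_F)$), to $H^0(F, k k_0 (K_F + L|_F) \otimes \mathcal{I}_{k k_0}(h_L|_F))$, and a dimension count bounds the Kodaira map image. In particular if $\kappa(F, K_F + L|_F, h_L|_F) = -\infty$ both sides are $-\infty$; otherwise assume $\kappa(F, K_F + L|_F, h_L|_F) \geq 0$.

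For the reverse inequality, Theorem \ref{newbo} supplies a generalized Iitaka fibration $f^\flat : X^\flat \rightarrow Y^\flat$ with $\dim Y^\flat = \kappa(X, K_X + L, h_L)$ and very general fibers of vanishing generalized Kodaira dimension. Since $Y$ is of general type, for $k$ large and divisible the pluricanonical map $\Phi_{|k K_Y|}$ is birational onto its image. Fix a non-zero section $\eta \in H^0(X, k(K_{X/Y} + L) \otimes \mathcal{I}_k(h_L))$; its existence for suitable $k$ follows by extending a non-zero section of $k k_0 (K_F + L|_F) \otimes \mathcal{I}_{k k_0}(h_L|_F)$ on $F$ via Ohsawa-Takegoshi, as in the proof of Theorem \ref{ne}. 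Then for each $\tau \in H^0(Y, k K_Y)$, the product $\eta \cdot f^\ast \tau$ lies in $S_k$; since $\{\tau\}$ birationally embeds $Y$, the sections $\{\eta \cdot f^\ast \tau\}$ separate distinct fibers of $f$. Hence $f^\flat = \Phi_{|S_k|}$ meromorphically dominates $f$: there is a meromorphic $g : Y^\flat \dashrightarrow Y$ with $g \circ f^\flat = f$, giving $\dim Y^\flat \geq \dim Y$. Restricting $f^\flat$ to a very general fiber $F$ of $f$ yields a meromorphic map onto a generic fiber of $g$; by Theorem \ref{newbo} applied to $F$, combined with the equality $\kappa = \tilde{\kappa}$ on $F$ and uniqueness of the generalized Iitaka fibration up to birational equivalence, this restriction is birational to the generalized Iitaka fibration of $(F, K_F + L|_F, h_L|_F)$, so generic fibers of $g$ have dimension $\kappa(F, K_F + L|_F, h_L|_F)$. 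Combining yields $\dim Y^\flat = \dim Y + \kappa(F, K_F + L|_F, h_L|_F)$.

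The principal technical obstacle is the extension step producing $\eta$ on $X$—requiring careful absorption of positivity into $h_L$ along the lines of the proof of Theorem \ref{ne}—together with the compatibility identification between $f^\flat|_F$ and the intrinsic generalized Iitaka fibration of $F$, both of which depend essentially on the analytic singularities hypothesis on $h_L$.
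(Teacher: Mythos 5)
Your high-level strategy—realize $\kappa(X,K_X+L,h_L)$ as $\dim Y^\flat$ for the generalized Iitaka fibration $f^\flat$ of Theorem \ref{newbo}, show $f^\flat$ factors through $f$ using the general type of $Y$, and then compute the fiber dimension—is the right kind of idea, and it closely parallels what the paper packages into Lemma \ref{lem4.4}. However, there is a genuine gap in the step where you identify $f^\flat|_F$ with the generalized Iitaka fibration of $(F,K_F+L|_F,h_L|_F)$ and conclude $\dim Y^\flat-\dim Y=\kappa(F,K_F+L|_F,h_L|_F)$.

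The problem is that $f^\flat|_F$ is the Kodaira map associated with the \emph{restricted} linear system $S_k(X)|_F\subset H^0\bigl(F,kk_0(K_F+L|_F)\otimes\mathcal{I}_{kk_0}(h_L|_F)\bigr)$, which is in general a proper \emph{sublinear} system. Appealing to ``uniqueness of the generalized Iitaka fibration up to birational equivalence'' presupposes that $f^\flat|_F$ already \emph{is} an Iitaka fibration for $F$, which is precisely what needs proving: a sublinear system with $\kappa=0$ fibers could have strictly smaller image dimension than $\kappa(F,K_F+L|_F,h_L|_F)$, with no contradiction to uniqueness. Theorem \ref{newbo} applied to $F$ tells you what the Iitaka fibration of $F$ looks like; it does not tell you that the restriction of the ambient fibration coincides with it. What actually closes the gap (and is how the paper handles the analogous step inside Lemma \ref{lem4.4}, eq.\ (\ref{pn})) is to apply Theorem \ref{newbo} to $(X,K_X+L,h_L)$ to get $\kappa(G,K_G+L|_G,h_L|_G)=0$ for the very general fiber $G$ of $f^\flat$, note that $G$ is the very general fiber of the induced fiber space $f^\flat|_F\colon F\dashrightarrow g^{-1}(y)$ (with connectedness via Proposition \ref{alcl}), and then invoke the easy subadditivity Proposition \ref{lemmakey} for $f^\flat|_F$ to obtain $\kappa(F,K_F+L|_F,h_L|_F)\le\dim\operatorname{Im}(f^\flat|_F)+\kappa(G,K_G+L|_G,h_L|_G)=\dim\operatorname{Im}(f^\flat|_F)$; combined with the elementary reverse inequality $\dim\operatorname{Im}(f^\flat|_F)\le\kappa(F,K_F+L|_F,h_L|_F)$ this forces equality. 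Without this subadditivity step your inequality only goes the wrong way.

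Two smaller remarks. First, the Ohsawa--Takegoshi extension you invoke (as in Lemma \ref{lemma11}) does not directly produce a section in $H^0(X,k(K_{X/Y}+L)\otimes\mathcal{I}_k(h_L))$; it produces one in $H^0(X,(kk_0(K_{X/Y}+L)+f^\ast G)\otimes\mathcal{I}_{kk_0}(h_L))$ for an auxiliary ample $G$ on $Y$, and one must multiply by a section of $kk_0K_Y-G$ (available since $K_Y$ is big) to land in $S_{kk_0}$. This is also essentially the step the paper uses to verify the hypothesis $\kappa\ge0$ needed to even invoke the Iitaka fibration machinery; you should make it explicit. Second, the paper deliberately works with the twisted bundle $K_{X/Y}+L+f^\ast A$ for $A$ ample on $Y$ (Lemma \ref{lem4.4}) and only transfers to $K_X+L$ at the end via bigness of $K_Y$; this keeps the application of Theorem \ref{below} and Proposition \ref{lemmakey} cleanly inside the fiber space $\Phi'$ induced by the twisted linear system. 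Your more direct route on $K_X+L$ can be made to work, but the fiber-dimension argument must be supplied by Proposition \ref{lemmakey}, not by ``uniqueness.''
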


\begin{remark}
  In the case when $(X, \Delta)$ is a \emph{compact K{\"a}hler} klt pair with a $\mathbb{Q}$-effective divisor
  $\Delta$, $Y$ is of general type and set $(L, h_L) = (\mathcal{O}_X (\Delta),
  h_{\Delta})$, it follows immediately from Corollary \ref{dio} that
  \begin{equation}
    \kappa (X, K_X + L) = \kappa (F, K_F + L_F) + \dim Y. \label{jop}
  \end{equation}
  Such addition formula (\ref{jop}) has also been proved in {\cite{campana}} and
  {\cite{nakayama}} when $(X, \Delta)$ is a \emph{projective} log
  canonical pair (see also Theorem 1.7 in {\cite{fujino-notes}}).
\end{remark}

The proof of Corollary \ref{dio} will be different from that of Theorem \ref{01.1}. It is based on a quantitative analysis of
the classical Kodaira meromorphic map (cf. Theorem \ref{newbo}) and some recent developments
of analytic tools, which mainly consists of $L^2$ extension theory and the
positivity of relative $k$-Bergman metrics.

\quad

This paper is organized as follows. In section \ref{sect2}, we give some
preliminaries for the proofs of main theorems. The most important results will be the extension results: Lemma \ref{lemma11} and Lemma \ref{lemma111}. In section \ref{sect3}, we will complete
the proofs of Theorem \ref{ne} and Theorem \ref{mainthm}. We also provide an analytic approach to Theorem \ref{thm1.1}. In section \ref{sect5}, we first recall some
basic results about Iitaka $D$-dimension and Okounkov body associated to graded algebra of almost integral type. Then we discuss their applications to generalized
Kodaira dimensions. Consequently, we give the proof of Theorem \ref{newbo} in the end. In section \ref{sect4}, we give another proof of Corollary \ref{dio} in the special case when $h_L$ has analytic singularities, which will be different from that of \cite{zhou-zhu1}.

\section{Preliminaries}\label{sect2}

\subsection{Relative $k$-Bergman metrics for K{\"a}hler
fibrations}\label{sec2.1}

Let $f : X \rightarrow Y$ be a surjective holomorphic map between an
$n$-dimensional compact K{\"a}hler manifold $X$ and an $m$-dimensional compact
connected complex manifold $Y$. Let $L$ be a pseudoeffective line bundle over $X$
equipped with a (semi-)positively curved singular hermitian metric $h_L = e^{-
\varphi_L},$ where $\varphi_L \in L^1_{\tmop{loc}}$ is a plurisubharmonic local weight function.
For $k \in \mathbb{N}$, the stalk of the $k$-multiplier ideal sheaf $\mathcal{I}_k (h_L)
\subset \mathcal{O}_X$ at $x\in X$ is defined as follows:
\begin{equation}
  \mathcal{I}_k (h_L)_x \assign \{F \in \mathcal{O}_{X, x} ; |F|^{2 / k} e^{-
  \varphi_L} \in L^1_{\tmop{loc}} \tmop{near} x\} . \label{ddjo}
\end{equation}
When $k = 1$, It is well-known from Nadel's theorem that $\mathcal{I} (h_L)=\mathcal{I}_{1} (h_L)$ is a
coherent analytic sheaf. For general $k$, the coherence of the $k$-multiplier ideal sheaf follows from the strong
openness property of the multiplier ideal sheaves ({\cite{guan-zhou15soc}}) and
Demailly's equisingular approximation theorem ({\cite{dps}}) for
plurisubharmonic functions (cf. {\cite{Siu04}, \cite{cao}}).

Let $K_{X / Y} \assign K_X - f^{\ast} K_Y$ be the relative canonical bundle.
We set
\[ Y_0 \assign \{ y \in Y | \tmop{rank} d f (x) = m, \forall x\in f^{-1}(y) \} \nobracket \]
to be the regular values of $f$, $Y_h \assign \{ y \in Y_0 : h_L |_{X_y}
\nequiv + \infty \} \nobracket$ and
\begin{equation}
  Y_{k, \tmop{ext}} \assign \{ y \in Y_0  | \tmop{rank} f_{\ast} (k K_{X / Y}
  + k L) = h^0 (X_y, k K_{X_y} + k L|_{X_y}) \} . \nobracket
\end{equation}
and respectively $X_{k, \tmop{ext}} \assign f^{- 1} (Y_{k, \tmop{ext}})$. It
is clear that both $Y_0$ and $Y_{k, \tmop{ext}}$ are Zariski open dense subsets of
$Y$. Assume that
\[ H^0 (X_y, (k K_{X_y} + k L|_{X_y}) \otimes \mathcal{I}_k (h_L)) \neq 0 \]
holds for some $y \in Y_{k, \tmop{ext}}  \bigcap Y_h$. Now we can assign the
$k$-Bergman metric, denoted by $B^{- 1}_{k, y}$, where
\[ B_{k, y} (x) \assign \sup \{u (x) \otimes \overline{u} (x) ; u \in H^0
   (X_y, k K_{X_y} + k L|_{X_y}) \infixand \int_{X_y} |u|^{2 / k} h_L \leq
   1\}, \]
to each fiber $X_y$ for $y \in Y_{k, \tmop{ext}}$. Gluing them together we may
endow $(k K_{X / Y} + k L) |_{X_{k, \tmop{ext}}}$ with a metric denoted by
$B^{- 1}_{k, X / Y}$, which is known as the relative $k$-Bergman metric. It
has been proved that

\begin{theorem}
  \label{zhouzhu}{\tmem{(Theorem 1.5 in {\cite{zhouzhu}})}} $B^{- 1}_{k, X /
  Y}$ has semi-positive curvature current on $X_{k, \tmop{ext}}$. Furthermore,
  it can be extended across $X \backslash X_{k, \tmop{ext}}$ as a
  new metric with semi-positive curvature current.
\end{theorem}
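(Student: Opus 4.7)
The plan is to establish semi-positive curvature first on $X_{k,\tmop{ext}}$, where $f_{*}(kK_{X/Y}+kL)$ is locally free, and then extend across the degeneracy locus by an upper semi-continuous regularization. The approach is a quantitative adaptation of Berndtsson-type plurisubharmonic variation of Bergman kernels, with Ohsawa--Takegoshi extension with optimal constant (Theorem \ref{sdf}) as the sharp analytic input needed to handle the $L^{2/k}$-norm.

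First I would localize the problem. Fix $y_{0}\in Y_{k,\tmop{ext}}$ and a small polydisc $U\subset Y_{k,\tmop{ext}}$ about $y_{0}$. Since $\tmop{rank} f_{*}(kK_{X/Y}+kL)$ is constant on $U$ and equal to the fiberwise $h^{0}$, after shrinking $U$ one obtains a holomorphic frame $\{s_{j}\}_{j=1}^{N}$ of $f_{*}(kK_{X/Y}+kL)$ over $U$, i.e. sections on $f^{-1}(U)$ whose restrictions to each fiber span $H^{0}(X_{y},kK_{X_{y}}+kL|_{X_{y}})$. The weight of $B_{k,X/Y}^{-1}$ at $x\in X_{y}$ with $y\in U$ then admits the envelope representation
\begin{equation}
\log B_{k,y}(x)=\sup_{s}\Bigl\{\log|s(x)|^{2}-k\log\int_{X_{y}}|s|^{2/k}h_{L}\Bigr\},
\end{equation}
the supremum taken over non-zero holomorphic combinations $s=\sum_{j}c_{j}(y)s_{j}$ with $c_{j}\in\mathcal{O}(U)$.

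The crux is to show that each candidate weight $\log|s(x)|^{2}-k\log\int_{X_{y}}|s|^{2/k}h_{L}$ is plurisubharmonic in $(x,y)$. The first summand obviously is. For the second, I would argue that $y\mapsto\int_{X_{y}}|s|^{2/k}h_{L}$ is log-plurisuperharmonic, so that $-k\log\int_{X_{y}}|s|^{2/k}h_{L}$ is psh. This is the $L^{2/k}$ analogue of Berndtsson's log-subharmonicity for Bergman kernels, and I would establish it by combining Ohsawa--Takegoshi with optimal constant (the sharp extension of peak sections from a reference fiber is quantitatively equivalent to the desired log-plurisuperharmonicity) with an equisingular approximation of $h_{L}$ and a $k$-th root / cyclic cover construction that reduces the claim to the classical $k=1$ Berndtsson setting on an auxiliary cover; strong openness of multiplier ideals \cite{guan-zhou15soc} ensures that $\mathcal{I}_{k}(h_{L})$ behaves well under the approximation. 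Passing to the upper semi-continuous regularization of the envelope then produces the desired psh weight on $X_{k,\tmop{ext}}$.

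To extend across the proper analytic subset $X\setminus X_{k,\tmop{ext}}$, it suffices to show that $\log B_{k,X/Y}$ is locally bounded above near each $x_{0}\in X\setminus X_{k,\tmop{ext}}$, whereupon the standard removable singularity theorem for psh functions extends it across as a psh weight; a local upper bound is produced by an Ohsawa--Takegoshi extension from a generic nearby fiber through or close to $x_{0}$, giving a relative section with controlled $L^{2/k}$ mass and pointwise value at $x_{0}$. The main obstacle I expect is precisely the $L^{2/k}$ analogue of Berndtsson's theorem for $k>1$: the $2/k$-th power is not an inner product, so one cannot directly apply the usual $\bar\partial$-Hessian / H\"ormander argument to the weight. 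Making the $k$-th root cover and equisingular approximation rigorous, while retaining quantitative control compatible with the sharp Ohsawa--Takegoshi constants, is the delicate technical point, and is exactly where one would rely on the techniques developed in \cite{zhouzhu}.
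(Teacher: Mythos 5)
The paper does not prove Theorem \ref{zhouzhu}; it quotes it verbatim from \cite{zhouzhu} (Theorem 1.5 there) and only points to the relevant literature in the remark that follows. So there is no in-paper argument to compare against, and what you have written must be judged on its own merits against the proof in \cite{zhouzhu}.

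The central step of your proposal does not hold. You assert that for a fixed holomorphic section $s$ over $f^{-1}(U)$, the map $y\mapsto\int_{X_y}|s|^{2/k}h_L$ is log-plurisuperharmonic, so that each candidate envelope weight $\log|s(x)|^2-k\log\int_{X_y}|s|^{2/k}h_L$ is psh in $(x,y)$ and the sup is then psh. But the local density of the integrand is $e^{-\psi}$ with $\psi=\varphi_L-\tfrac1k\log|s|^2$ (plus the volume-form contribution), and the term $-\tfrac1k\log|s|^2$ is \emph{pluri\-super\-harmonic}, so $\psi$ is not psh and the Prekopa--Berndtsson theorem does not apply. This is not a technicality: even for $k=1$ the individual candidate weights are not psh; the plurisubharmonicity of $\log B_1$ is a statement about the Hilbert-space structure of the $L^2$ metric on the direct image (Berndtsson's Nakano semi-negativity, dualized), not a termwise statement. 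For $k\geq 2$ the $L^{2/k}$ ``norm'' is no longer an inner product, and precisely for that reason the proof in \cite{zhouzhu} (and in \cite{PT} for the projective case) abandons any direct variational/envelope argument and instead runs a self-referential Ohsawa--Takegoshi bootstrap: the weight used for the extension from a fiber is $B_{k,X/Y}^{-(k-1)/k}\cdot h_L$ itself, as you can see reappear in the present paper's Proposition \ref{relation} and Lemma \ref{lemma11}, together with Siu's lemma to control limits of the resulting $L^{2/k}$ estimates. Your cyclic-cover reduction also does not rescue the termwise claim, because the branch divisor $\{s=0\}$ varies with $y$ (and with the choice of $s$), so the $k$-fold covers do not assemble into a fibration to which Berndtsson's $k=1$ theorem could be applied.

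Your treatment of the extension across $X\setminus X_{k,\tmop{ext}}$ is also too thin. Once a psh weight is in hand on $X_{k,\tmop{ext}}$, the removable-singularity mechanism you invoke requires a uniform local upper bound for $\log B_{k,X/Y}$ near the degeneracy locus, and producing such a bound is exactly the difficulty flagged in Remark 4.2.4 of \cite{PT} and the principal new contribution of \cite{zhouzhu}: the rank of the direct image, and hence the fiberwise normalization of the $L^{2/k}$ norm, jumps as $y$ leaves $Y_{k,\tmop{ext}}$, and one needs a Fatou-type control (Siu's lemma) on the extremal families rather than a one-shot Ohsawa--Takegoshi extension from a single nearby fiber.
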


\begin{remark}
  When $f$ is projective, this result was initially obtained in {\cite{PT}}
  (see also {\cite{bp08}}). When $k = 1$ and
  $f$ is a proper fibration between a K{\"a}hler manifold $X$ and a complex
  manifold $Y$ ($X$ not necessarily compact), this result was obtained in
  {\cite{zhouzhujdg}} by using the so-called Guan-Zhou method in
  {\cite{guan-zhou15optimal}}. For general $k \geq 2$, a crucial step in \cite{zhouzhu} to
  establish Theorem \ref{zhouzhu} is overcoming the extension difficulties
  which were pointed out in Remark 4.2.4 in {\cite{PT}}.
\end{remark}

Now consider the set $\tilde{Y}_{k, h_L, \tmop{ext}} \assign \{y \in Y_0 ; h^0
(X_y, (k K_{X_y} + k L|_{X_y}) \otimes \mathcal{I}_k (h_L) |_{X_y}) =
\tmop{rank} f_{\ast} (k K_{X / Y} + k L) \otimes \mathcal{I}_k (h_L)\}$ and
respectively $$Y_{k, h_L, \tmop{ext}} \assign \{y \in Y_{k, \tmop{ext}}
\bigcap \tilde{Y}_{k, h_L, \tmop{ext}} ; \mathcal{I}_k (h_L) |_{X_y} =
\mathcal{I}_k (h_L |_{X_y}) \} \nobracket \nobracket.$$ In {\cite{dwzz}}, the
authors has obtained another new proof of the positivity of the relative
$k$-Bergman metric on $f^{- 1} (\tilde{Y}_{k, h_L, \tmop{ext}})$. According to Fubini's theorem and the basic extension property of
$k$-multiplier ideal sheaves (cf. Theorem 2.4 in {\cite{zhou-zhu1}} or
Proposition 0.2 in {\cite{bp2}}), one may find that $\mathcal{I}_k (h_L) |_{X_y} = \mathcal{I}_k
(h_L |_{X_y}) \nobracket \nobracket$ for almost all $y \in Y$ and $Y
\backslash Y_{k, h_L, \tmop{ext}}$ is of measure zero. As a consequence,  for any $y \in \bigcap^{\infty}_{ k = 1} Y_{k,
h_L, \tmop{ext}}$, $h^0
(X_y, (k K_{X_y} + k L|_{X_y}) \otimes \mathcal{I}_k (h_L |_{X_y}))$ is
constant for any $k \in \mathbb{N}$, which explains why $\kappa (F, K_F + L|_F, h_L {|_F} )$
does not vary for almost all $F$ in Theorem \ref{01.1}.

The following proposition will be useful for clarifying the relationship between
Nadel's multiplier ideal sheaves and $k$-multiplier ideal sheaves:

\begin{proposition}
  \label{relation}Let $L$ be a line bundle, $k \in \mathbb{N}$ and $B^{- 1}_{k, X / Y}$ be the natural extension of the
  relative Bergman metric on $k (K_{X / Y} + L)$ according to Theorem \ref{zhouzhu}, then
  \begin{enumeratealpha}
    \item $\Gamma (X_y, k (K_{X_y} + L|_{X_y}) \otimes \mathcal{I}_k (h_L
    |_{X_y})) \subset \Gamma (X_y, k (K_{X_y} + L|_{X_y}) \otimes \mathcal{}
    \mathcal{I} ((B_{k, X / Y}^{- \frac{k - 1}{k}}  \cdot h_L )|_{X_y}))$
    for general $X_y$;

    \item Let $G$ be a line bundle on $Y$. Then $\Gamma (X, k (K_{X / Y} + L)
    \otimes f^{\ast} G \otimes \mathcal{I} (B_{k, X / Y}^{- \frac{k - 1}{k}}
    \cdot h_L)) \subset \Gamma (X, k (K_{X / Y} + L) \otimes f^{\ast} G
    \otimes \mathcal{I }_k (h_L)) .$
  \end{enumeratealpha}
\end{proposition}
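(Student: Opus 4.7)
The strategy for both parts is to leverage the extremal characterization of the relative $k$-Bergman kernel $B_{k,y}$, which bridges the $L^{2/k}$-integrability defining $\mathcal{I}_k(h_L)$ with the $L^2$-integrability defining the Nadel multiplier ideal $\mathcal{I}(B_{k,X/Y}^{-(k-1)/k} h_L)$.

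For part (a), given $u \in \Gamma(X_y, k(K_{X_y}+L|_{X_y}) \otimes \mathcal{I}_k(h_L|_{X_y}))$, I would set $C := \bigl(\int_{X_y} |u|^{2/k} h_L\bigr)^k$, which is finite by hypothesis. The extremal definition of $B_{k,y}$ on the fiber immediately yields the pointwise bound $|u|^2 B_{k,y}^{-1} \leq C$ on $X_y$. I would then observe the algebraic factorization
\[
|u|^2 B_{k,y}^{-\frac{k-1}{k}} h_L \;=\; \bigl(|u|^2 B_{k,y}^{-1}\bigr)^{\frac{k-1}{k}} \cdot |u|^{2/k} h_L \;\leq\; C^{\frac{k-1}{k}} |u|^{2/k} h_L,
\]
whose right-hand side is integrable on $X_y$, giving the desired Nadel inclusion.

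For part (b), write $u$ locally as $u = F \cdot (\textrm{frame})$, so the hypothesis becomes $|F|^2 e^{-\frac{k-1}{k}\Psi - \varphi_L} \in L^1_{\mathrm{loc}}$, where $\Psi$ is the local weight of $B_{k,X/Y}^{-1}$. I would apply H\"older's inequality with conjugate exponents $k$ and $k/(k-1)$ to the identity
\[
|F|^{2/k} e^{-\varphi_L} \;=\; \bigl(|F|^2 e^{-\frac{k-1}{k}\Psi - \varphi_L}\bigr)^{1/k} \cdot \bigl(e^{\Psi/k - \varphi_L}\bigr)^{\frac{k-1}{k}},
\]
obtaining
\[
\int_U |F|^{2/k} e^{-\varphi_L} \;\leq\; \Bigl(\int_U |F|^2 e^{-\frac{k-1}{k}\Psi - \varphi_L}\Bigr)^{1/k} \Bigl(\int_U e^{\Psi/k - \varphi_L}\Bigr)^{\frac{k-1}{k}}.
\]
The first factor is finite by hypothesis.

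The hard part will be establishing that the second factor $\int_U e^{\Psi/k - \varphi_L}$ is finite for small open $U \subset X$. My plan is to obtain the pointwise bound $\Psi/k \leq \varphi_L + C_U$ on relatively compact opens via the sub-mean value inequality applied to the plurisubharmonic function $|F'|^{2/k}$ (where $F'$ ranges over holomorphic sections in the extremal characterization of $B_{k,y}$): normalizing $\int_{X_y} |F'|^{2/k} e^{-\varphi_L} \leq 1$, the sub-mean yields $|F'(x)|^{2/k} \leq c_{n-m} r^{-2(n-m)} e^{\sup_{B(x,r) \cap X_y} \varphi_L}$, so taking the supremum gives $\Psi(x)/k \leq \sup_{B(x,r) \cap X_y} \varphi_L + O(\log 1/r)$. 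At regular points of $\varphi_L$, upper semi-continuity of $\varphi_L$ allows $r$ to be chosen so that $\sup_{B(x,r)} \varphi_L \leq \varphi_L(x) + 1$, producing the desired local bound. Near singular points of $\varphi_L$, admissible sections are forced to vanish to appropriate order, so that $\Psi$ inherits at least the singularities of $k \varphi_L$, keeping $e^{\Psi/k - \varphi_L}$ locally integrable. A careful treatment of these two regimes should complete the argument.
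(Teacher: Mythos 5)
Part (a) is correct and is essentially the paper's argument: the pointwise extremal bound $|u|^2 B_{k,y}^{-1}\leq \|u\|_k^2$ from the definition of the relative $k$-Bergman kernel, combined with the factorization $|u|^2 B_{k,y}^{-(k-1)/k}h_L = (|u|^2B_{k,y}^{-1})^{(k-1)/k}|u|^{2/k}h_L$, is exactly the paper's inequality \eqref{24} in a slightly different order.

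For part (b), the H\"older decomposition with conjugate exponents $k$ and $k/(k-1)$ is correct and structurally matches the paper's estimate \eqref{223}. However, the crucial finiteness of the auxiliary factor $\int e^{\Psi/k-\varphi_L}$ is precisely where your argument has a genuine gap. You try to get a \emph{pointwise} bound $\Psi(x)/k \leq \varphi_L(x) + C_U$ via the sub-mean value inequality, but this is both stronger than needed and not achievable by the argument you sketch: the radius $r$ in $|F'(x)|^{2/k}\leq c\,r^{-2(n-m)}\exp(\sup_{B(x,r)\cap X_y}\varphi_L)$ must be chosen depending on $x$ to keep $\sup_{B(x,r)}\varphi_L$ close to $\varphi_L(x)$, and upper semi-continuity gives no uniform control of $r$, so the $-2(n-m)\log r$ term is not bounded. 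The treatment near the polar set of $\varphi_L$ (``admissible sections are forced to vanish to appropriate order'') is a heuristic, not a proof, and for a general plurisubharmonic $\varphi_L$ (not just one with analytic singularities) there is no clean way to convert the $L^{2/k}$ constraint into a pointwise vanishing order that exactly cancels $e^{-\varphi_L}$. The paper instead proves the needed integral bound globally: by Fubini one reduces to the fiberwise statement
\begin{equation*}
\int_{X_y} B_{k,y}^{1/k}\,h_L \;\leq\; \operatorname{rank}\, f_{*}\bigl((kK_{X/Y}+kL)\otimes\mathcal{I}_k(h_L)\bigr) \;<\;\infty
\end{equation*}
for $y\in Y_{k,\mathrm{ext}}$, which is Lemma 2.7 in \cite{zhou-zhu1}. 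This bound is a known consequence of the extremal characterization of $B_{k,y}$ (morally, ``$\int (\text{Bergman kernel})^{1/k}\leq$ dimension''), and replacing your sub-mean value argument by it closes part (b) cleanly; without it, your proof of (b) is incomplete.
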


\begin{proof}

  \begin{enumeratealpha}
    \item Note $B^{- 1}_{k, X / Y} |_{X_y}$ for $y \in Y_{k, \tmop{ext}}$ is
    nothing but the $k$-Bergman kernel metric on $X_y$, therefore, for any $s
    \in \Gamma (X_y, k (K_{X_y} + L|_{X_{y}}) \otimes \mathcal{I}_k (h_L |_{X_y}))$ we
    get
    \begin{equation}
      \int_{X_y} |s|^2 B_{k, X / Y}^{- \frac{k - 1}{k}} \cdot h_L \leq
      \int_{X_y} |s|^2 \cdot (\frac{\|s\|_k}{|s|})^{2 \frac{k - 1}{k}} \cdot
      h_L = \|s\|_k^2, \label{24}
    \end{equation}
    where
    \begin{equation}
      \|s\|_k  : = (\int_{X_y} |s|^{2 / k} h_L)^{k / 2} .
    \end{equation}
    Our claim follows by (\ref{24}).

    \item We first denote that $\tilde{h} \assign B_{k, X / Y}^{- \frac{k -
    1}{k}} \cdot h_L$. For any $\tilde{s} \in \Gamma (X, k (K_{X / Y} + L)
    \otimes f^{\ast} G \otimes \mathcal{I }_k (h_L))$, and any smooth metric
    $h_{K_Y}$ (resp. $h_G$) on $K_Y$ (resp. $G$), we have
    \[ \int_X | \tilde{s} |^{\frac{2}{k}} h_L  \cdot f^{\ast}
       (h_G^{\frac{1}{k}} h^{- 1}_{K_Y }) = \int_X \frac{| \tilde{s}
       |^{\frac{2}{k}} f^{\ast} (h_G h_{K_Y}^{- 1})^{\frac{1}{k}}}{B_{k, X /
       Y}^{\frac{k - 1}{k^2}} } \cdot \frac{B_{k, X / Y}^{\frac{k -
       1}{k^2}}}{{(f^{\ast}}  h_{K_Y})^{\frac{k - 1}{k}}} \cdot h_L \]

    \[ \text{(H\"{o}lder inequality)} \leq (\int_X \frac{|
       \tilde{s} |^2 h_L f^{\ast} (h_G h_{K_Y}^{- 1}) }{B_{k, X / Y}^{\frac{k
       - 1}{k }}})^{\frac{1}{k}} (\int_X \frac{B_{k, X / Y}^{\frac{1}{ k}}
       h_L}{f^{\ast} (h_{K_Y})})^{\frac{k - 1}{k}} \]
    \[ = (\int_X | \tilde{s} |^2 \tilde{h} f^{\ast} (h_G h_{K_Y}^{-
       1}))^{\frac{1}{k}} (\int_X \frac{B_{k, X / Y}^{\frac{1}{k }}
       h_L}{f^{\ast} (h_{K_Y})})^{\frac{k - 1}{k}} \]
       \begin{equation}
         \overset{\tmop{Fubini}}{=} \overset{}{} (\int_X | \tilde{s} |^2
         \tilde{h}  f^{\ast} (h_G h_{K_Y}^{- 1}))^{\frac{1}{k}} (\int_{y \in
         Y} \frac{1}{h_{K_Y}} \int_{X_y} B_{k, y}^{\frac{1}{k }} h_L)^{\frac{k
         - 1}{k}} . \label{223}
       \end{equation}
    To analyze the last integral, one just needs to use the fact that
    \begin{equation}
      \int_{X_y} B_{k, y}^{\frac{1}{k }} h_L \leq \tmop{rank} f_{\ast} ((k
      K_{X / Y} + L) \otimes \mathcal{I}_k (h_L) \mathcal{}) \label{224} <
      \infty
    \end{equation}
    holds for $y \in Y_{k, \tmop{ext}}$ (cf. Lemma 2.7 in {\cite{zhou-zhu1}}).
    Finally we verify our claim from inequality (\ref{223}).
  \end{enumeratealpha}
\end{proof}

\begin{remark}
  \label{suppl}When $L$ is a pseudoeffective $\mathbb{Q}$-line bundle which can be equipped with a (semi-)positively curved singular
  metric $h_L$, Proposition \ref{relation} is still true for any $k \in
  \mathbb{N}$ sufficiently divisible (so that $k L$ is a line bundle).
\end{remark}

\subsection{$L^2$ Extension theory on weakly pseudoconvex manifolds}

We will adopt the following Ohsawa-Takegoshi $L^2$ extension theorem on weakly pseudoconvex
K{\"a}hler manifolds:

\begin{theorem}
  \label{sdf}{\tmem{(Theorem 2.8 and Remark 2.9 (b) in {\cite{demailly16}}, or Theorem 2.1 in
  {\cite{zhou-zhu1}})}} Let $(X, \omega)$ be a weakly pseudoconvex K{\"a}hler
  manifold and $\psi$ be a quasi-psh function on $X$ with neat analytic
  singularities, {\tmem{i.e.}} $\psi$ can be locally written as:
  \[ \psi = c \log ( | F_1 |^2 + \cdots + | F_N |^2) + u, \nobracket
     \nobracket \nobracket \nobracket \]
  where each $F_j$ is holomorphic, $c \in \mathbb{R}_{\geq 0}$ and $u$ is
  smooth. Let S be the zero variety $V (\mathcal{I} (\psi))$. Assume that $\psi$
  has log canonical singularities, {\tmem{i.e.}}
  \[ \mathcal{I} ((1 - \varepsilon) \psi) |_S = \mathcal{O }_X |_S \nobracket
     \nobracket \]
  holds for every $\varepsilon > 0$. Denote by $S^0$ the regular set of $S$. Let $P$
  be a holomorphic line bundle over $X$ equipped with a singular hermitian
  metric $h_P$. Assume that there is an $\alpha > 0$ such that
  \begin{equation}
    \sqrt{- 1} \Theta_{h_P} + \xi \sqrt{- 1} \partial \overline{\partial} \psi
    \geq 0
  \end{equation}
  holds for all $\xi \in [1, 1 + \alpha]$. Then for every
  \[ f \in \Gamma (S^0, (K_X + P) |_{S^0}) \nobracket \]
  and
  \[ \int_{S^0} | f |_{h_{\omega} \otimes h_P}^2 d V_{X, \omega} [\psi] <
     \infty, \nobracket \nobracket \]
  there exists an $F \in \Gamma (X, K_X + P)$ such that $F |_S = f \nobracket$
  and
  \begin{equation}
    \int_X \frac{\rho (\alpha \psi)}{e^{\psi}} |F|^2_{h_{\omega} \otimes h_P}
    d V_{X, \omega} \leq \frac{34}{\alpha} \int_{S^0} |f |_{h_{\omega} \otimes
    h_P}^2 d V_{X, \omega} [\psi], \nobracket
  \end{equation}
  where $\rho : \mathbb{R \rightarrow \mathbb{R}}$ is defined by $\rho (t) =
  e^{- \frac{t}{2}}$ when $t > 0$, $\rho (t) = \frac{1}{1 + t^2}$ when $t \leq
  0$.
\end{theorem}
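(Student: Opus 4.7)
The plan is to prove Theorem \ref{sdf} by following Demailly's approach to the Ohsawa--Takegoshi extension theorem with sharp constants, suitably adapted to the weakly pseudoconvex K\"ahler setting and to log canonical singularities of $\psi$. The strategy is to construct the extension in the form $F = \chi \tilde f - u$, where $\tilde f$ is a smooth extension of $f$ from $S^0$ to a neighborhood in $X$, $\chi$ is a cutoff function supported near $S^0$, and $u$ is obtained by solving the inhomogeneous $\bar\partial$-equation $\bar\partial u = \bar\partial(\chi \tilde f)$ with a sharp weighted $L^2$ estimate.

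First I would reduce to a strongly pseudoconvex setting by exhausting $X$ via sublevel sets $X_c = \{\varphi < c\}$ of a smooth plurisubharmonic exhaustion function $\varphi$, producing a solution $F_c$ on each $X_c$ and letting $c \to \infty$ at the end; this is where the weak pseudoconvexity hypothesis is used. To handle the singular weight $\psi$, I would then apply Demailly's equisingular approximation theorem to obtain a decreasing sequence $\psi_\nu \downarrow \psi$ of quasi-psh functions with neat analytic singularities whose multiplier ideals are cofinal with those of $\psi$. The core analytic step is, for each $\nu$ and each $c$, to solve the $\bar\partial$-equation via the twisted Bochner--Kodaira--Nakano inequality with auxiliary functions of the form $\eta = a - \alpha \psi_\nu$ and a correction $\lambda$; the curvature hypothesis $\sqrt{-1}\,\Theta_{h_P} + \xi\,\sqrt{-1}\,\partial\bar\partial \psi \geq 0$ for all $\xi \in [1,1+\alpha]$ is precisely what makes the resulting twisted curvature form non-negative after absorbing the $\partial\bar\partial\eta$ and $\partial\eta\wedge\bar\partial\eta/\lambda$ terms. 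The density function $\rho$ in the statement is chosen so that, together with $\eta$ and $\lambda$, its derivatives satisfy the differential inequalities that yield the explicit universal constant $34/\alpha$ after a Cauchy--Schwarz step.

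The main obstacle will be ensuring that the limit $F$ actually satisfies $F|_S = f$ and the stated $L^2$ bound. The trace condition is where the log canonical assumption $\mathcal{I}((1-\varepsilon)\psi)|_S = \mathcal{O}_X|_S$ enters crucially: it guarantees that $f$ is a well-defined holomorphic section along $S^0$ with respect to the singular volume form $dV_{X,\omega}[\psi]$, and, more importantly, that the $L^2$ control on $F$ forces $F$ to reduce modulo the ideal of $S$ to the prescribed datum $f$ on $S^0$ rather than merely vanishing along $S$. Passage to the limit combines weak $L^2$-compactness of the family $\{F_{\nu,c}\}$ with Fatou's lemma and the strong openness property of multiplier ideal sheaves of Guan--Zhou to identify the limit; the sharp constant $34/\alpha$ is preserved because $\rho(\alpha \psi_\nu)/e^{\psi_\nu}$ converges monotonically to $\rho(\alpha \psi)/e^{\psi}$ on the complement of the polar locus, so lower semicontinuity of the $L^2$ norm transfers the estimate from $\psi_\nu$ to $\psi$.
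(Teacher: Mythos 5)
Note first that the paper supplies no proof of Theorem \ref{sdf}: it is quoted from Demailly (Theorem 2.8 and Remark 2.9(b) in \cite{demailly16}) and from Zhou--Zhu (Theorem 2.1 in \cite{zhou-zhu1}), so there is no internal argument of the paper's to compare your sketch against. That said, your proposal does describe the skeleton of the cited proof accurately at a high level: the decomposition $F = \chi\tilde f - u$, exhaustion by sublevel sets $X_c$ of a psh exhaustion, the twisted Bochner--Kodaira--Nakano inequality with auxiliary functions $\eta$ and $\lambda$, the role of the curvature hypothesis $\sqrt{-1}\Theta_{h_P} + \xi\sqrt{-1}\partial\bar\partial\psi \geq 0$ in absorbing the torsion terms, and the regularize-and-pass-to-the-limit scheme are all genuine features of Demailly's method, and your identification of where the log canonical hypothesis enters is broadly right.

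Two concrete concerns. First, the equisingular approximation step is misapplied: $\psi$ already has \emph{neat analytic singularities} by hypothesis and does not need to be regularized to a sequence $\psi_\nu$ with neat analytic singularities; what requires Demailly--Peternell--Schneider approximation in this argument is the singular weight of $h_P$, not $\psi$. Second, and more substantively, your limiting argument treats the Ohsawa residual measure $dV_{X,\omega}[\psi]$ as unproblematic, whereas the paper's own Remark \ref{refer} records that the well-definedness of this measure on $S^0$, asserted in Proposition 4.5 of \cite{demailly16}, in fact rests on an implicit ``unique lc place'' hypothesis identified later by Kim \cite{dkim}. Without that hypothesis the $\limsup$ defining $dV_{X,\omega}[\psi]$ need not yield a well-behaved measure with respect to which the finiteness condition on $f$ is meaningful, and your Fatou/lower-semicontinuity step has nothing stable to converge against. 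A proof of the general statement as literally written would inherit this gap from the source; the paper avoids the issue by restricting, in every application, to the case where $\psi$ is locally of the form $k\log(|z_1|^2 + \cdots + |z_k|^2) + \mathcal{C}^{\infty}$ near $S$, for which the unique lc place condition holds automatically and $dV_{X,\omega}[\psi]$ has smooth positive density along $S^0$. You should either impose that hypothesis explicitly or address the well-definedness of the measure before the extraction of the limit.
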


\begin{remark}\label{refer}
  Here the Ohsawa measure $d V_{X, \omega} [\psi]$ on $S^0$ is defined by
  \begin{equation}
    \int_{S^0}  g d V_{X, \omega} [\psi] \assign \limsup_{t \rightarrow - \infty}
    \int_{\{ x : t < \psi < t + 1 \}} \tilde{g} e^{- \psi} d V_{X, \omega},
  \end{equation}
  where $\tilde{g}$ is any smooth extension of a non-negative continuous
  function $g \in \mathcal{C}  (S^0)$ with compact support. Originally, the measure $d V_{X, \omega} [\psi]$ was claimed to be
  well-defined on $S^0$ by Proposition 4.5 in
  {\cite{demailly16}}. 
  
  However, as the latest paper \cite{dkim} suggests, this claim was made under the implicit ``unique lc place" assumption, which says that each irreducible component of the non-klt locus $S$ is dominated by a unique divisor with discrepancy equals to $-1$. It can be checked easily that when  $\psi$ is locally of the form $\psi = k \cdot \log(|z_1|^2 + ... + |z_k|^2) + \mathcal{C}^{\infty}$ (which is also the setting for later use), where $(z_1 = ... = z_k = 0)$ are local defining equations for $S$ (thus $S$ is non-singular and $k$ is the codimension of $S$), the so-called ``unique lc place" assumption is automatically satisfied and the Ohsawa measure $d V_{X, \omega} [\psi]$  has smooth positive density with respect to the measure $d V_{S,\omega|_{S}}$ (cf. (2.6) in \cite{demailly16}). In fact, this special log canonical singularity assumption on $\psi$ is the setting where the Ohsawa measure first appeared (cf. \cite{ohsawa4}).
  
  One can
  also refer to {\cite{zhouzhupacific}} for an optimal version of the above
  theorem.
\end{remark}

\begin{lemma}
  Let \label{lemma11}$f : X \rightarrow Y$ be a surjective holomorphic mapping
  between a compact K{\"a}hler manifold $X$ and a compact complex manifold
  $Y$. Let $L \rightarrow X$ be a pseudoeffective $\mathbb{Q}$-line bundle
  equipped with a singular metric $h_L$ whose curvature current is semi-positive.
  Let $k_0$ be the least integer so that $k_0 L$ is a line bundle and let $G$
  be an ample line bundle over $Y$. Let $\psi \leq 0$ be a quasi-psh function
  on $Y$, which is smooth outside a very general point $y \in
  \bigcap_{k = 1}^{\infty} Y_{k k_0, \tmop{ext}}$ and assume that
  \[ \psi (z) - \log | z - y  |^{2 m} \nobracket \nobracket \]
  is smooth near $y$ {\tmem{(such $\psi$ always exists! cf. (\ref{podq}))}}.
  Moreover, assume that there exists a hermitian metric ${h_{G - K_Y}}  $on $G
  - K_Y$ such that:
  \begin{equation}
    \sqrt{- 1} \Theta_{h_{G - K_Y}} (G - K_Y) + \xi \sqrt{- 1} \partial
    \overline{\partial} \psi \geq 0 \label{dddd}
  \end{equation}
  for all $\xi \in [1, 1 + \alpha]$ and $\alpha > 0$. Then for any integers
  $k$ and
  \[ s  \in \Gamma \left( X_{y }, k k_0 \left( K_{X_{y }} + L|_{X_y} \right)
     \otimes \mathcal{\mathcal{I} }_{k k_0} (h_L |_{X_y}) \right) \]
  and $y \in \bigcap_{k = 1}^{\infty} Y_{k k_0, \tmop{ext}}$, there exists a section
  \[ \tilde{s} \in \Gamma (X, (k k_0 (K_{X / Y} + L) + f^{\ast} G) \otimes
     \mathcal{I }_{k k_0} (h_L)) \]
  such that $\tilde{s} |_{X_y} = s$.
\end{lemma}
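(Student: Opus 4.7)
The plan is to apply the Ohsawa-Takegoshi extension theorem (Theorem \ref{sdf}) on $X$ with the pulled-back weight $\Psi := f^{\ast}\psi$, after writing the target bundle in adjoint form $K_X + P$ and equipping $P$ with a singular metric built from the relative $kk_0$-Bergman metric. The relative Bergman metric is essential: the $kk_0$-multiplier ideal $\mathcal{I}_{kk_0}(h_L)$, defined via the non-linear integrability condition $|F|^{2/(kk_0)}e^{-\varphi_L}\in L^1_{\mathrm{loc}}$, does not fit directly into the standard $L^2$-formalism of Theorem \ref{sdf}, and Proposition \ref{relation} is precisely the needed bridge.

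First, $\Psi = f^{\ast}\psi$ is quasi-psh on $X$ and smooth on $X \setminus X_y$. Since $y$ is a regular value (as $y \in Y_{kk_0,\mathrm{ext}} \subset Y_0$), the fiber $X_y$ is smooth, and near any point of $X_y$ one may choose coordinates in which $f$ is the projection to the first $m$ coordinates, so that
\[ \Psi(w) = m \log(|w_1|^2 + \cdots + |w_m|^2) + \mathcal{C}^{\infty}, \qquad X_y = \{w_1 = \cdots = w_m = 0\}. \]
This is exactly the log canonical model discussed in Remark \ref{refer}; in particular the Ohsawa measure $dV_X[\Psi]$ is a smooth positive multiple of the induced volume form on $X_y$, and the ``unique lc place'' assumption is automatic.

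Next, rewrite the target bundle as $kk_0(K_{X/Y} + L) + f^{\ast}G = K_X + P$ with
\[ P := (kk_0 - 1)(K_{X/Y} + L) + L + f^{\ast}(G - K_Y), \]
and equip $P$ with the singular metric
\[ h_P := B_{kk_0, X/Y}^{-(kk_0 - 1)/(kk_0)} \cdot h_L \cdot f^{\ast} h_{G - K_Y}. \]
Each factor is semi-positively curved (the first by Theorem \ref{zhouzhu}, the second by hypothesis, the third by pullback), and pulling back hypothesis (\ref{dddd}) yields $\sqrt{-1}\Theta_{h_P} + \xi\sqrt{-1}\partial\overline{\partial}\Psi \geq 0$ on $X$ for all $\xi \in [1, 1+\alpha]$. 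On $X_y$, the restriction $f^{\ast} h_{G-K_Y}|_{X_y}$ is a nonzero constant, so Proposition \ref{relation}(a) combined with the bound (\ref{24}) gives
\[ \int_{X_y} |s|^2 \, h_P|_{X_y} \, dV_X[\Psi] \leq C\, \|s\|_{kk_0}^2 < \infty. \]
Theorem \ref{sdf} then produces $\tilde{s} \in \Gamma(X, K_X + P) = \Gamma(X, kk_0(K_{X/Y} + L) + f^{\ast}G)$ with $\tilde{s}|_{X_y} = s$ and $|\tilde{s}|^2 h_P \in L^1_{\mathrm{loc}}(X)$, so that $\tilde{s}$ lies in the multiplier ideal of the metric $B_{kk_0, X/Y}^{-(kk_0-1)/(kk_0)} \cdot h_L$ (the smooth factor $f^{\ast} h_{G-K_Y}$ being irrelevant for the multiplier ideal). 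Proposition \ref{relation}(b) with $k = kk_0$ upgrades this membership to $\mathcal{I}_{kk_0}(h_L)$, yielding the desired section.

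The principal obstacle is precisely the mismatch between the non-linear defining condition of the $k$-multiplier ideal and the linear $L^2$-integrability used in Theorem \ref{sdf}. The relative $kk_0$-Bergman metric, made available by Theorem \ref{zhouzhu}, resolves this in one stroke: it supplies both the semi-positive curvature required to verify the curvature hypothesis of Theorem \ref{sdf} and, via Proposition \ref{relation}, the two-sided translation of multiplier-ideal conditions. Without the positivity statement of Theorem \ref{zhouzhu}, neither the curvature inequality nor the $L^2$-bound on $s$ could be established, and the argument would collapse.
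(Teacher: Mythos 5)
Your proposal is correct and follows essentially the same route as the paper's proof: you use the identical adjoint decomposition $kk_0(K_{X/Y}+L)+f^{\ast}G = K_X + P$ with the same metric $h_P = B_{kk_0,X/Y}^{-(kk_0-1)/(kk_0)}\cdot h_L \cdot f^{\ast}h_{G-K_Y}$, verify the curvature hypothesis via Theorem \ref{zhouzhu} and (\ref{dddd}), and translate between the $kk_0$-multiplier ideal and the ordinary Nadel ideal of the Bergman-twisted metric by Proposition \ref{relation}(a) on the fiber and (b) on the total space. Your explicit local description of $f^{\ast}\psi$ near $X_y$ and the finiteness estimate $\int_{X_y}|s|^2 h_P\,dV_X[f^{\ast}\psi] \lesssim \|s\|_{kk_0}^2$ merely unpack what the paper leaves implicit when it asserts the surjectivity of the restriction map.
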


\begin{proof}
  This is proved by a similar argument of Theorem 1.2 in {\cite{zhou-zhu1}}.
  Let
  \begin{equation}
    P \assign (k k_0 - 1) (K_{X / Y} + L) + L + f^{\ast} (G - K_Y) \label{jjj}
  \end{equation}
  be a line bundle endowed with a singular hermitian metric:
  \begin{equation}
    h_P \assign B_{k k_0, X / Y}^{- (k k_0 - 1) / k k_0} \cdot f^{\ast} h_{G -
    K_Y} \cdot h_L . \label{das}
  \end{equation}
  According to (\ref{dddd}), (\ref{das}) and Theorem \ref{zhouzhu}, we may
  compute its curvature by:
  \[ \sqrt{- 1} \Theta_{h_P} + \xi \sqrt{- 1} \partial \overline{\partial}
    f^{*} \psi = (kk_{0} - 1) \sqrt{- 1} \Theta_{ B_{k k_0, X / Y}^{-1 / k
     k_0}}(K_{X / Y} + L) + \sqrt{- 1} \Theta_{h_L} (L) + \]
  \begin{equation}
    f^{\ast}(\xi \sqrt{- 1}  \partial \overline{\partial} \psi +
    \sqrt{- 1} \Theta_{{h_{G - K_Y}} } (G - K_Y)) \geq 0.
  \end{equation}
  Note $V (\mathcal{J} (f^{\ast} \psi)) = X_y$ and $f^{\ast} \psi$ has log
  canonical singularities (which also satisfies the ``unique lc place" assumption), applying Theorem
  \ref{sdf} and Remark \ref{refer} we see that
  $$\Gamma(X,(K_X + P)\otimes \mathcal{I}(h_P))= \Gamma (X, (k k_0 (K_{X / Y} + L) + f^{\ast} G) \otimes \mathcal{I} (B_{k
     k_0, X / Y}^{- \frac{k k_0 - 1}{k k_0}} \cdot h_L)) \rightarrow$$
     $$
      \Gamma(X_y,(K_{X} + P)|_{X_y}\otimes \mathcal{I}(h_P |_{X_{y}}))=\Gamma
     (X_y, k k_0 (K_{X_y} + L|_{X_{y}}) \otimes \mathcal{} \mathcal{I} (B_{k k_0, X /
     Y}^{- \frac{k k_0 - 1}{k k_0}} |_{X_y} \cdot h_L |_{X_y})) $$
  is surjective for every $y \in Y_0$. Combining relation $a)$ and $b)$ in
  Proposition \ref{relation} (see also Remark \ref{suppl}) we immediately get
  the desired extension section $\tilde{s}$ when $y \in \bigcap_{k =
  1}^{\infty} Y_{k k_0, \tmop{ext}}$.
\end{proof}

\begin{remark}
  One may refer to Theorem 2.11 in {\cite{dengya}} for a qualitative
  formulation of this result under the assumption that $f$ is projective. Our
  improvement consists of two aspects. One is that it gives a more precise
  version of a quantitative estimate of the restriction map, the other is that
  we only assume $f : X \rightarrow Y$ to be a K{\"a}hler fiber space with the
  help of Theorem \ref{zhouzhu}. Compared with Theorem 1.2 in
  {\cite{zhou-zhu1}}, we do not assume the positivity of the canonical bundle
  $K_Y$ to control the negative part of the 
  closed $(1, 1)$-current
  $f^{\ast} \sqrt{- 1} \partial \overline{\partial} \psi .$
\end{remark}

We can also formulate a slight modified version of Lemma
\ref{lemma11}. The main difference is that the inequality does not involve any
multiplier ideal sheaves.

\begin{lemma}  \label{lemma111}
  Let $f : X \rightarrow Y$ be an algebraic
  fiber space between two projective manifolds $X$ and $Y$.
  Let $k \in \mathbb{N}$. Let $L=\mathcal{O}_X (D)$ be a line bundle over $X$, where $D$ is a simple normal crossing divisor on $X$. Let $G$ be an ample line bundle over
  $Y$. Let $H$ be an ample line bundle over $X$. Let $y$ be a very
  general point in $Y$ and
  $\psi \leq 0$ be a quasi-psh function on $Y$ which is smooth outside $y$. Assume that
  $$ \psi (z) - \log | z - y  |^{2 m} \nobracket \nobracket $$
  is smooth near $y$. Moreover, assume that there exists a \ hermitian metric
  ${h_{G - K_Y}}  $on $G - K_Y$ such that:
  \begin{equation}
    \sqrt{- 1} \Theta_{h_{G - K_Y}} (G - K_Y) + \xi \sqrt{- 1} \partial
    \overline{\partial} \psi \geq 0 \label{ddd}
  \end{equation}
  for all $\xi \in [1, 1 + \alpha]$ and $\alpha > 0$. Then for any $k\in\mathbb{N}$ and
  \[ s  \in \Gamma ( X_{y }, k ( K_{X_{y }} + L|_{X_y} ) +
     H|_{X_y}), \]
  there exists a section
  \[ \tilde{s} \in \Gamma (X, (k (K_{X / Y} + L) + H + f^{\ast} (G))
     \nobracket \]
  such that $\tilde{s} |_{X_y} = s$.
\end{lemma}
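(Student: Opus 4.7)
The plan is to mimic the proof of Lemma \ref{lemma11}, with the auxiliary ample bundle $H$ absorbed into the target metric so that the multiplier ideal restrictions on the extendable sections become trivial. Set
$$P := (k-1)(K_{X/Y} + L) + L + H + f^{\ast}(G - K_Y),$$
so that $K_X + P = k(K_{X/Y} + L) + H + f^{\ast}G$. The strategy is then to (i) equip $P$ with a singular Hermitian metric $h_P$ of semi-positive twisted curvature, and (ii) verify that every section $s \in \Gamma(X_y, (K_X+P)|_{X_y})$ is $L^2$-integrable on $X_y$ against $h_P|_{X_y}$, so that the Ohsawa--Takegoshi theorem (Theorem \ref{sdf}) can be applied to produce the desired extension $\tilde{s}$.

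For (i), endow $L = \mathcal{O}_X(D)$ with the canonical singular metric $h_L$ (whose curvature current is the semi-positive $[D]$) and $H$ with a smooth positively curved metric $h_H$. Define
$$h_P := B_{k, X/Y}^{-(k-1)/k} \cdot h_L \cdot h_H \cdot f^{\ast}h_{G - K_Y},$$
where $B_{k, X/Y}^{-1}$ denotes the relative $k$-Bergman metric on $k(K_{X/Y}+L)$ attached to $h_L$. By Theorem \ref{zhouzhu}, $B_{k, X/Y}^{-1}$ has semi-positive curvature; combining this with hypothesis (\ref{ddd}) and the semi-positivity of $[D]$ and $\sqrt{-1}\Theta_{h_H}$ yields
$$\sqrt{-1}\Theta_{h_P} + \xi f^{\ast}\sqrt{-1}\partial\bar\partial \psi \geq 0, \quad \xi \in [1, 1+\alpha].$$

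The main obstacle is (ii): unlike in Lemma \ref{lemma11}, where integrability is guaranteed only on sections lying in the $k$-multiplier ideal sheaf (Proposition \ref{relation}(a)), here we must extend an \emph{arbitrary} section of $k(K_{X_y}+L|_{X_y}) + H|_{X_y}$. The idea is to exploit the ampleness of $H$ to absorb the singular contribution of $B_{k,X/Y}^{-(k-1)/k} \cdot h_L$: since $D$ is simple normal crossing and $h_H$ is smooth and strictly positive on the compact fibre, a H\"{o}lder-type estimate modeled on (\ref{223}), combined with a local coordinate analysis near $D|_{X_y}$ (where the canonical section of $\mathcal{O}_{X_y}(D|_{X_y})$ factors out the log singularities against which $h_H$ bounds the remaining holomorphic part), should show that $|s|^2 \, h_P|_{X_y}$ is $L^1$ for every such $s$. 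This is the technically most delicate step.

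Once (i) and (ii) are in place, the quasi-psh function $f^{\ast}\psi$ has log canonical singularities with zero variety $X_y$ and satisfies the ``unique lc place'' condition of Remark \ref{refer}; Theorem \ref{sdf} then directly produces the required $\tilde{s} \in \Gamma(X, K_X + P) = \Gamma(X, k(K_{X/Y} + L) + H + f^{\ast}G)$ with $\tilde{s}|_{X_y} = s$, completing the proof.
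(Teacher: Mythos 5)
Your step (i) — the curvature computation for $h_P$ — is fine, and the general strategy of using a twisted Bergman kernel together with Ohsawa--Takegoshi on the log canonical weight $f^{\ast}\psi$ is the correct one. The gap is in step (ii), and it is fatal as written: with $h_L$ the canonical singular metric on $\mathcal O_X(D)$ (so $\varphi_L\sim\log|s_D|^2$ locally), the weight of $h_P|_{X_y}$ has a genuine pole along $D|_{X_y}$, and on top of that $B_{k,X/Y}^{-(k-1)/k}|_{X_y}$ blows up there as well, because the Hilbert space defining the $k$-Bergman kernel attached to $h_L$ only contains sections of $k(K_{X_y}+L|_{X_y})$ that already vanish along $D|_{X_y}$. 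A \emph{smooth} metric $h_H$ on a compact fibre is bounded and therefore cannot absorb any pole; there is no ``local analysis near $D|_{X_y}$'' that makes $|s|^2 h_P|_{X_y}$ integrable for an \emph{arbitrary} $s\in\Gamma(X_y,k(K_{X_y}+L|_{X_y})+H|_{X_y})$. A second, related problem is that the H\"older estimate you appeal to (modelled on (\ref{223})) needs the Bergman-kernel inequality $B_{k,y}^{-1/k}\leq|s/\|s\|_k|^{-2/k}$, which only holds when $s$ lies in the Hilbert space defining $B_{k,y}$; but $s$ carries the extra factor $H|_{X_y}$, which $B_{k,X/Y}$ (as you set it up, on $k(K_{X/Y}+L)$) does not.

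The paper's proof resolves both issues with one move that your proposal is missing: do \emph{not} keep the metric $|s_D|^{-2}$ on $L$. Instead set $L_k:=L+\tfrac1kH$, a big $\mathbb Q$-line bundle, and use the bigness of $L_k$ together with the SNC hypothesis on $D$ to equip $L_k$ with a semi-positively curved singular metric $h_k$ with analytic singularities whose multiplier ideal $\mathcal I(h_k)$ is \emph{trivial}. Then $\mathcal I(h_k|_{X_y})$ is trivial for general $y$, so $\|s\|_k^{2/k}=\int_{X_y}|s|^{2/k}h_k<\infty$ holds automatically for \emph{every} $s\in\Gamma(X_y,k(K_{X_y}+L_k|_{X_y}))=\Gamma(X_y,k(K_{X_y}+L|_{X_y})+H|_{X_y})$. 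Moreover the relative $k$-Bergman kernel $B_{k,X/Y,L_k}$ is now attached to $h_k$ and to precisely the right line bundle, so the pointwise bound $B_{k,y,h_k}^{-1/k}\leq|s/\|s\|_k|^{-2/k}$ applies and yields $\int_{X_y}|s|^2 h_P\leq\|s\|_k^2<\infty$ with $h_P:=B_{k,X/Y,L_k}^{-(k-1)/k}\cdot h_k\cdot f^{\ast}h_{G-K_Y}$ and $P:=(k-1)(K_{X/Y}+L_k)+L_k+f^{\ast}(G-K_Y)$. In short, your proposal recognises where the difficulty lies but does not have the key idea that closes it: replace the tautological singular metric on $L$ by a metric on $L+\tfrac1kH$ with trivial multiplier ideal, which the ampleness of $H$ and the SNC structure of $D$ make available.
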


\begin{proof}

 Since $D$ is a simple normal crossing divisor, for each $k\in\mathbb{N}$, the big $\mathbb{Q}$-line bundle $L_k := L+\frac{1}{k} H$ can be equipped with a singular metric $h_k$ with analytic singularities, whose curvature current is semi-positive and multiplier ideal sheaf $\mathcal{I}(h_k)$ is trivial. Thus $\mathcal{I}(h_k |_{X_y})$ is also trivial for general $y\in Y$. Hence $||s||_k^{2/k}=\int_{X_y} |s|^{2/k}h_k <\infty$. Let $B_{k,X/Y,L_k}^{-1}$ denote the (extended) relative $k$-Bergman kernel metric on $k(K_{X/Y}+L_k)$, with respect to the algebraic fiber space $f:X\rightarrow Y$ and the metric $h_k$ on $L_k$. Fix the integer $k \in \mathbb{N}$.
  Then for general $y\in Y$ and $s \in \Gamma ( X_{y }, k ( K_{X_{y }} + L|_{X_y} )
  + H|_{X_y} )=\Gamma(X_y, k(K_{X_y}+L_k |_{X_{y}}))$, the following inequality
  \begin{equation}\label{jsp}
 (B^{-
    \frac{1}{k}}_{k, X / Y, L_k}) |_{X_y}= B^{-
    \frac{1}{k}}_{k, y, h_k}\leq
    |\frac{s}{||s||_k}|^{- \frac{2}{k}}
  \end{equation}
  holds, where $B_{k, y, h_k}$ stands for the $k$-Bergman kernel of $k(K_{X_y}+L_k |_{X_y})$ with respect to the metric $h_k |_{X_y}$.
  Let
    \begin{equation}
    P \assign (k - 1) (K_{X / Y} + L_k) + L_k + f^{\ast} (G - K_Y) \label{jjj}
  \end{equation}
  be a line bundle over $X$.
  Now we define a metric on $P$ as
  \begin{equation}
    h_{P } \assign B_{k, X / Y, L_k}^{- (k -
    1)/k}  \cdot h_{k} \cdot f^{\ast} h_{G - K_Y} \label{29}
  \end{equation}
 whose curvature current is semi-positive, according to (\ref{ddd}) and Theorem \ref{zhouzhu}.
 Then for general $y \in Y$,
  \begin{equation}
   C_y \cdot \int_{X_y}|s|^{2} h_P = \int_{X_y} |s|^2 B_{k, X / Y, L_k}^{- (k - 1)/k} \cdot h_k  \leq
    ||s||_{k}^{\frac{2(k-1)}{k}}\cdot\int_{X_y} |s|^{\frac{2}{k}} \cdot h_k =||s||^{2}_{k}
    < \infty
  \end{equation}
  holds thanks to (\ref{jsp}), where $C_y >0$ is a constant depending only on $y$. Therefore, applying Theorem \ref{sdf} and Remark \ref{refer} again we get a section
  \[ \tilde{s} \in \Gamma (X, K_X + P ) = \Gamma (X, (k (K_{X / Y} + L) + H
     + f^{\ast} (G)) \nobracket \]
  such that $\tilde{s} |_{X_y} = s$. As $k$ varies among all integers, the desired extension property holds for very general $y\in Y$.
\end{proof}

\section{Proof of Theorem \ref{ne} and Theorem \ref{mainthm}} \label{sect3}

Before giving the proof, let us give some simple but useful lemmas.

\begin{lemma}\label{simple}
  Let $X$ be a compact complex manifold and $(L,h_L)$ be a pseudoeffective line bundle over $X$. Let $a \in \mathbb{N}$ and $\kappa_{\sigma, a, A} (X, K_X + L,
  h_L)$ be the following number
  \begin{equation}
    \max \{\nu \in \mathbb{Z} ; \limsup_{k \rightarrow \infty} \frac{h^0 (k a
    (K_X + L) \otimes A \otimes \mathcal{I }_{a k} (h_L))}{k^{\nu}} > 0\}
  \end{equation}
  and $\kappa_{\sigma, a} \assign \max_{l \in \mathbb{N}} \{\kappa_{\sigma, a,
  l A} \}$. Then $\kappa_{\sigma, a} = \kappa_{\sigma}$.
\end{lemma}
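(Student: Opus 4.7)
The plan is to prove the two inequalities $\kappa_{\sigma,a} \leq \kappa_\sigma$ and $\kappa_{\sigma,a} \geq \kappa_\sigma$ separately. For the first, note that substituting $k \mapsto ak$ in the defining sequence of $\kappa_\sigma(X,K_X+L,h_L;lA)$ yields $h^0(X, ak(K_X+L) \otimes lA \otimes \mathcal{I}_{ak}(h_L))/(ak)^\nu$, which agrees with the defining sequence of $\kappa_{\sigma,a,lA}$ up to the constant factor $a^{-\nu}$. Hence $\kappa_{\sigma,a,lA}$ is simply the limsup of $\kappa_\sigma(X,K_X+L,h_L;lA)$ taken along the subsequence of indices divisible by $a$, and is therefore bounded above by $\kappa_\sigma(X,K_X+L,h_L;lA) \leq \kappa_\sigma$. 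Maximizing over $l$ gives $\kappa_{\sigma,a} \leq \kappa_\sigma$.

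For the reverse inequality, assume $\nu \assign \kappa_\sigma \geq 0$ (otherwise both sides are $-\infty$). There exist $l \in \mathbb{N}$, $c > 0$ and an infinite sequence $k_j \to \infty$ with $h^0(X, k_j(K_X+L) \otimes lA \otimes \mathcal{I}_{k_j}(h_L)) \geq c k_j^\nu$. A pigeonhole argument lets us pass to a further subsequence along which all $k_j$ share a common residue $r \in \{0,1,\ldots,a-1\}$ modulo $a$. If $r = 0$, then writing $k_j = a k'_j$ yields $\kappa_{\sigma,a,lA} \geq \nu$ immediately. Otherwise, the strategy is to promote sections by tensoring with a fixed auxiliary section that shifts the grading by $a-r$, thereby turning $k_j + a - r$ into a multiple of $a$.

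To produce the auxiliary section, observe that the multiplier ideal sheaf $\mathcal{I}_{a-r}(h_L)$ is nonzero and coherent (by Guan--Zhou's strong openness property combined with Demailly's equisingular approximation, as recalled in Section \ref{sect2}); its stalks are nonzero everywhere since they contain sufficiently high powers of local coordinates. Hence, for $M$ sufficiently large (uniformly in $r \in \{1,\ldots,a-1\}$), Serre's vanishing theorem for the ample line bundle $A$ provides a nonzero section
\[ t_r \in H^0(X, ((a-r)(K_X+L) + MA) \otimes \mathcal{I}_{a-r}(h_L)). \]
For any $s \in H^0(X, k(K_X+L) \otimes lA \otimes \mathcal{I}_k(h_L))$, the product $s \otimes t_r$ then lies in $H^0(X, ((k+a-r)(K_X+L) + (l+M)A) \otimes \mathcal{I}_{k+a-r}(h_L))$: membership in the multiplier ideal sheaf follows from H\"older's inequality applied with exponents $(k+a-r)/k$ and $(k+a-r)/(a-r)$, which is precisely the basic inclusion $\mathcal{I}_k(h_L)\cdot \mathcal{I}_{a-r}(h_L) \subset \mathcal{I}_{k+a-r}(h_L)$ already invoked in the introduction. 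Since $X$ is a connected complex manifold and $t_r \neq 0$, the map $s \mapsto s \otimes t_r$ is injective. Writing $k_j + a - r = a k'_j$, we conclude
\[ h^0(X, ak'_j(K_X+L) \otimes (l+M)A \otimes \mathcal{I}_{ak'_j}(h_L)) \geq c k_j^\nu \geq c' (k'_j)^\nu \]
for some $c' > 0$ and all $j$ large, yielding $\kappa_{\sigma,a,(l+M)A} \geq \nu$ and therefore $\kappa_{\sigma,a} \geq \nu = \kappa_\sigma$.

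The main subtlety lies in the third paragraph: securing a nonzero $t_r$ that simultaneously lives in the twisted line bundle $(a-r)(K_X+L) + MA$ and in the multiplier ideal sheaf $\mathcal{I}_{a-r}(h_L)$. For this, the coherence of $\mathcal{I}_{a-r}(h_L)$ is essential, so that Serre's theorem can be applied; the H\"older inequality then ensures that multiplication by $t_r$ preserves the desired $L^{2/(k+a-r)}$-integrability and hence realizes the key injection between the two relevant cohomology spaces.
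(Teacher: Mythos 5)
Your proof is correct, but it takes a genuinely different and somewhat heavier route for the reverse inequality $\kappa_{\sigma,a}\geq\kappa_{\sigma}$. The paper's argument is more direct: given $l_0$ and a subsequence $\{k_\nu\}$ with $N_\nu:=h^0(k_\nu(K_X+L)\otimes l_0A\otimes\mathcal{I}_{k_\nu}(h_L))\geq C k_\nu^{\kappa_\sigma}$, it replaces each index $k_\nu$ by $ak_\nu$ and invokes the cohomology inequality $h^0(ak_\nu(K_X+L)\otimes al_0A\otimes\mathcal{I}_{ak_\nu}(h_L))\geq N_\nu$. The underlying mechanism (left implicit in the paper) is that, once $N_\nu\geq 1$, one can pick a nonzero $s_1$ in the source group and the linear map $s\mapsto s_1^{a-1}\cdot s$ is injective and lands in $\mathcal{I}_{ak_\nu}(h_L)$ by the same H\"older inclusion $\mathcal{I}_k\cdot\mathcal{I}_{l}\subset\mathcal{I}_{k+l}$ that you invoke. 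Your variant instead (i) pins down a common residue $r$ modulo $a$ by pigeonhole, (ii) manufactures an auxiliary section $t_r$ of $((a-r)(K_X+L)+MA)\otimes\mathcal{I}_{a-r}(h_L)$ via Serre vanishing for the coherent sheaf $(a-r)(K_X+L)\otimes\mathcal{I}_{a-r}(h_L)$, and (iii) multiplies by $t_r$ to shift the grading to a multiple of $a$. Both approaches rest on the same H\"older inclusion and both require $X$ projective (so that $A$ and Serre vanishing make sense, an implicit hypothesis the lemma's phrase ``compact complex manifold'' does not spell out); your route buys you a very explicit source for the injecting section at the cost of pigeonhole and Serre, whereas the paper's is shorter because the auxiliary section is simply borrowed from the source cohomology itself, making the index rescaling $k_\nu\mapsto ak_\nu$ land automatically on multiples of $a$ with no residue bookkeeping. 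Neither gap is fatal; your write-up is arguably more self-contained since the paper never explains why its dimension inequality holds.
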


\begin{proof}
  It suffices to prove $\kappa_{\sigma, a} \geq \kappa_{\sigma}$. By
  the definition of $\kappa_{\sigma}$ there exists $l_0$ and a sequence $\{k_{\nu} \}_{\nu=1}^{\infty}$ such that
  $N_{\nu} : = h^0 (k_{\nu} (K_X + L) \otimes l_0 A \otimes \mathcal{I
  }_{k_{\nu}} (h_L)) \geq C \cdot k_{\nu}^{\kappa_{\sigma}}$, then we obtain
  \begin{equation}
    h^0 (a k_{\nu} (K_X + L) \otimes a l_0 A \otimes \mathcal{I }_{a k_{\nu}}
    (h_L)) \geq N_{\nu} \geq C \cdot k_{\nu}^{\kappa_{\sigma}}
  \end{equation}
  for any $\nu \in \mathbb{N}$. Therefore, $\kappa_{\sigma, a} \geq
  \kappa_{\sigma, a, a l_0 A} \geq \kappa_{\sigma}$.
\end{proof}

\begin{lemma} \label{addti}
Let $f : X \rightarrow Y$ be an algebraic
  fiber space between two projective manifolds $X$ and $Y$.
 Let  $D_Y$ be an effective divisor on $Y$ and $\mathcal{E}$ be a coherent sheaf on $X$. Assume $\mathcal{F} =
  f_{\ast} \mathcal{E}$ is a sufficiently generically globally generated
  coherent sheaf on $Y$. Then
  \begin{equation}
    h^0 (X, \mathcal{E} \otimes f^{\ast} \mathcal{O}_Y (D_Y)) \geq h^0 (Y,
    \mathcal{O}_Y (D_Y)) \cdot \tmop{rk} \mathcal{F},
  \end{equation}
  where $\tmop{rk} \mathcal{F}$ is the rank of $\mathcal{F}$.
\end{lemma}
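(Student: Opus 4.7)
The plan is to combine the projection formula with the generic surjection of $\mathcal{O}_Y^{\oplus r}$ onto $\mathcal{F}$ afforded by the global generation hypothesis. First, since $\mathcal{O}_Y(D_Y)$ is locally free (in particular, flat) on $Y$, the projection formula yields
\begin{equation*}
f_{\ast}\bigl(\mathcal{E} \otimes f^{\ast}\mathcal{O}_Y(D_Y)\bigr) \;\cong\; \mathcal{F} \otimes \mathcal{O}_Y(D_Y),
\end{equation*}
and hence by Leray (applied to the affine-over-$Y$ morphism on global sections, or simply by the identification $H^0(X,-)=H^0(Y,f_{\ast}-)$ for any coherent sheaf on $X$) one has
\begin{equation*}
h^0\bigl(X, \mathcal{E} \otimes f^{\ast}\mathcal{O}_Y(D_Y)\bigr) \;=\; h^0\bigl(Y, \mathcal{F} \otimes \mathcal{O}_Y(D_Y)\bigr).
\end{equation*}
This reduces the problem to bounding $h^0(Y,\mathcal{F}\otimes\mathcal{O}_Y(D_Y))$ from below.

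Next, set $r:=\operatorname{rk}\mathcal{F}$. By the hypothesis, $\mathcal{F}$ is globally generated on a subset $U\subset Y$ whose complement has measure zero; in particular $U$ is nonempty. I would pick sections $s_1,\ldots,s_r\in H^0(Y,\mathcal{F})$ whose images in $\mathcal{F}(y_0):=\mathcal{F}_{y_0}/\mathfrak{m}_{y_0}\mathcal{F}_{y_0}$ form a basis at some point $y_0\in U$ (possible since $\dim\mathcal{F}(y_0)\geq r$ and the global sections surject onto $\mathcal{F}(y_0)$). These sections define a sheaf homomorphism
\begin{equation*}
\phi:\mathcal{O}_Y^{\oplus r}\longrightarrow \mathcal{F},\qquad (a_1,\ldots,a_r)\mapsto \textstyle\sum_i a_i s_i,
\end{equation*}
which is injective at the generic point of $Y$. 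Since $\mathcal{O}_Y^{\oplus r}$ is torsion-free and $\ker\phi$ is a subsheaf with vanishing generic stalk, one concludes $\ker\phi=0$, so $\phi$ is an injection of coherent sheaves on $Y$.

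Finally, tensoring the short exact sequence $0\to \mathcal{O}_Y^{\oplus r}\to \mathcal{F}\to \mathcal{F}/\phi(\mathcal{O}_Y^{\oplus r})\to 0$ with the locally free sheaf $\mathcal{O}_Y(D_Y)$ preserves injectivity, giving an inclusion
\begin{equation*}
\mathcal{O}_Y(D_Y)^{\oplus r}\hookrightarrow \mathcal{F}\otimes\mathcal{O}_Y(D_Y).
\end{equation*}
Taking $H^0(Y,-)$, which is left exact, yields
\begin{equation*}
h^0(Y,\mathcal{F}\otimes\mathcal{O}_Y(D_Y))\;\geq\; r\cdot h^0(Y,\mathcal{O}_Y(D_Y))\;=\;\operatorname{rk}\mathcal{F}\cdot h^0(Y,\mathcal{O}_Y(D_Y)),
\end{equation*}
which combined with the first paragraph proves the lemma. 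The only delicate point is the existence of the injection $\phi$; the ``sufficiently generic'' hypothesis (merely complement of measure zero, not Zariski open) is in fact more than enough, since we only need one point $y_0$ at which $\mathcal{F}$ is globally generated in order to select a generically independent $r$-tuple of sections, and the torsion-freeness of $\mathcal{O}_Y^{\oplus r}$ then automatically upgrades generic injectivity to injectivity of sheaves.
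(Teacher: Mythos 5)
Your proof is correct, and it takes a genuinely different route from the paper's. Both proofs begin identically by applying the projection formula to reduce the statement to $h^0(Y,\mathcal{F}\otimes\mathcal{O}_Y(D_Y))\geq r\cdot h^0(Y,\mathcal{O}_Y(D_Y))$, and both pick a point $y_0$ in the intersection of the locally free locus of $\mathcal{F}$ with the global generation locus and choose $r$ sections $s_1,\ldots,s_r$ generating the stalk at $y_0$. The divergence is in how the lower bound is extracted. The paper works entirely with explicit sections: it fixes a basis $\xi_1,\ldots,\xi_q$ of $H^0(Y,\mathcal{O}_Y(D_Y))$ and proves directly that the $qr$ products $\xi_i\sigma_j$ are linearly independent in $H^0(Y,\mathcal{F}\otimes\mathcal{O}_Y(D_Y))$, by restricting to $q$ carefully chosen points $y_1,\ldots,y_q$ near $y_0$ and arranging, step by step, that the matrix $[\tilde\xi_i(y_k)]$ is non-degenerate. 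You instead package the sections as a sheaf map $\phi:\mathcal{O}_Y^{\oplus r}\to\mathcal{F}$, observe by Nakayama that $\phi$ is an isomorphism on the stalk at $y_0$ (hence generically injective), upgrade this to a genuine injection of sheaves using torsion-freeness of $\mathcal{O}_Y^{\oplus r}$ (equivalently, irreducibility of $Y$ and analytic continuation), tensor with the locally free $\mathcal{O}_Y(D_Y)$, and use left exactness of $H^0$. Your argument is shorter and more conceptual, avoiding the paper's hand-built non-degenerate matrix, at the modest cost of invoking the machinery of torsion subsheaves and the ``generic injectivity implies injectivity'' principle; the paper's version is longer but fully explicit and self-contained. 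Both are valid. One minor cosmetic note: the identification $h^0(X,-)=h^0(Y,f_*-)$ is simply the definition of direct image, so the mention of Leray is unnecessary overhead; also, you should state explicitly (as you implicitly use) that $y_0$ is chosen in the locally free locus so that $\dim\mathcal{F}(y_0)=r$, not merely $\geq r$.
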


\begin{proof}
  Let us assume $q = h^0 (Y, \mathcal{O}_Y (D_Y)) \geq 1, r = \tmop{rk}
  \mathcal{F} \geq 1$ without loss of generality. According to projection
  formula, $H^0 (X, \mathcal{E} \otimes f^{\ast} \mathcal{O}_Y (D_Y))$=$H^0
  (Y, \mathcal{O}_Y (D_Y) \otimes \mathcal{F})$. It follows from (generalized) Grauert's direct image and semi-continuity theorem (cf. Theorem 10.6 and Theorem 10.7 in \cite{bertin}) that $\mathcal{F} |_{Y_0}$
  is locally free for some analytic Zariski open dense subset $Y_0$ contained in the regular values of $f$ and
  $\mathcal{F}_y \otimes k (y)  = H^0 (X_y, \mathcal{E} |_{X_y})$, $\tmop{rk}
  \mathcal{F} = h^0 (X_y, \mathcal{E}  |_{X_y})$ for any $y \in Y_0$. Since
  $\mathcal{F} = f_{\ast} \mathcal{E}$ is sufficiently generically globally
  generated, one may pick $y_0 \in Y_0$ and $r$ linearly independent sections
  $\sigma_1, \ldots, \sigma_r \in H^0 (Y, \mathcal{F})$ such that $\sigma_{1,
  y_0}, \ldots, \sigma_{r, y_0}$ generates the stalk $\mathcal{F}_{y_0}$. Note
  $\mathcal{F}$ is coherent, hence there is a small open neighborhood $\Omega
  \subset Y_0$ of $y_0$, such that $\sigma_{1, y }, \ldots, \sigma_{r, y }$
  generates the stalk $\mathcal{F}_{y }$ for any $y \in \Omega$. In
  particular, $\sigma_1 |_{X_y}, \ldots, \sigma_r |_{X_y}$ form a basis of
  $H^0 (X_y, \mathcal{E} |_{X_y})$ for $y \in \Omega$.

  Let $\xi_1, \ldots_, \xi_q$ be a basis of $H^0 (Y, \mathcal{O}_Y (D_Y))$.
  We claim that $\{\xi_i \sigma_j \}_{i = 1, \ldots q, j = 1, \ldots, r}$
  are linearly independent in $H^0 (Y, \mathcal{O}_Y (D_Y) \otimes \mathcal{F})
  = H^0 (X, \mathcal{E} \otimes f^{\ast} \mathcal{O}_Y (D_Y))$ and $h^0 (X,
  \mathcal{E} \otimes f^{\ast} \mathcal{O}_Y (D_Y)) \geq q r$.

  Actually, consider the following equations for $t_{i j} \in \mathbb{C}$:
  \begin{equation}
    \sum_{i = 1, \ldots q, j = 1, \ldots r} t_{i j} \xi_i \sigma_j = 0,
    \label{jjk}
  \end{equation}
  We will show $t_{i j} = 0$. Take $q$ general points $y_1, \ldots, y_q \in
  \Omega$ which will be determined later. Assume $\Omega$ also trivializes
  $\mathcal{O}_Y (D_Y)$ with a holomorphic frame $e$ and $\xi_i |_{\Omega} = \tilde{\xi}_i
  (y) \cdot e$ with $\tilde{\xi}_i \in \mathcal{O} (\Omega)$. When restricting
  (\ref{jjk}) to $X_{y_1}, \ldots X_{y_q}$, we find $q$ equations:
  \begin{equation}
    \sum_{j = 1}^r (\sum_{i = 1}^q t_{i j} \tilde{\xi}_i (y_k) \cdot f^{\ast}
    e) \sigma_j |_{X_{y_k}} = 0, k = 1, \ldots q
  \end{equation}
  Since $\sigma_1 |_{X_y}, \ldots, \sigma_r |_{X_{y_k}}$ are linearly
  independent,
  \begin{equation}
    \sum_{i = 1}^q t_{i j} \tilde{\xi}_i (y_k) = 0, j = 1, \ldots, r, k = 1,
    \ldots q.
  \end{equation}
  Fix $\tmop{any}$ $j$ now and consider the $q \times q$ matrix
  $[\tilde{\xi}_i (y_k)] $, if $[\tilde{\xi}_i (y_k)]$ is non-degenerate, then
  $t_{i j} = 0$ for all $i = 1, \ldots q$. To this end, we will pick $y_k \in
  \Omega$ as follows. \

  We will argue step by step. First choose $y_1$ such that $\tilde{\xi}_1
  (y_1) \neq 0$, this is possible since $\xi_1 \neq 0$. Consider the analytic
  subset $A_2 \assign \{y \in \Omega ; \widetilde{\xi_1} (y_1) \tilde{\xi}_2
  (y) - \widetilde{\xi_2} (y_1) \widetilde{\xi_1} (y) = 0\}$ of $\Omega$. It
  can be shown that $A_2 \neq \Omega$ since otherwise $\widetilde{\xi_1} (y_1)
  \xi_2 - \widetilde{\xi_2} (y_1) \xi_1 = 0$, which contradicts the fact that
  $\xi_1, \xi_2$ are linearly independent. Therefore we pick $y_2 \in \Omega
  \backslash A_2$. Iterating this process $q - 1$ times, we finally get
  an analytic subset
  \begin{equation}
    A_q \assign \{y \in \Omega ; \sum_{i = 1}^q (- 1)^{q - i} \alpha_i
    \tilde{\xi}_i (y) = 0\}
  \end{equation}
  with $\alpha_i = \det (\tilde{\xi}_s (y_t))_{s = 1, \ldots, \hat{i}, \ldots,
  q, t = 1, \ldots, q - 1}$ and $\alpha_q \neq 0$. Therefore, $A_q$ is proper since $\xi_1,...,\xi_q$ are linearly independent.
  One may pick $y_q \in \Omega \backslash A_{q }$ and $[\tilde{\xi}_i
  (y_k)]_{i, k = 1, \ldots, q}$ is non-degenerate.

  In summary, $t_{i j} = 0$ and $h^0 (X, \mathcal{E} \otimes f^{\ast}
  \mathcal{O}_Y (D_Y)) \geq q r$.
\end{proof}

Now we are in position to give the proof of Theorem \ref{mainthm}.

\begin{proof}
  (Proof of Theorem \ref{mainthm}) Let us first verify that for any semi-positive line bundle $H \rightarrow X$ and any $k \in
  \mathbb{N}$, some divisible $k_0 \in \mathbb{N}$ (such that $k_0 L$ is a
  line bundle) and some ample enough line bundle $G
  \rightarrow Y$,
  \begin{equation}
    \mathcal{F} \assign \mathcal{O}_Y (G) \otimes f_{\ast} \mathcal{O }_X ((k
    k_0 K_{X / Y} + k k_0 L) \otimes H \otimes \mathcal{I}_{{k k_0} } (h_L))
    \label{;;}
  \end{equation}
  is sufficiently generically globally generated. $H$ is to be determined at the end of the proof.

  Actually, It suffices to show that the restriction morphisms
$$
    \Gamma (X, (k k_0 K_{X / Y} + k k_0 L + f^{\ast} G + H) \otimes
    \mathcal{I}_{k k_0} (h_L)) \rightarrow $$
    \begin{equation}
    \Gamma \left( X_{y }, k k_0 \left(
    K_{X_{y }} + L|_{X_y} \right) \otimes \mathcal{O} (H|_{X_y}) \otimes
    \mathcal{\mathcal{I} }_{k k_0} (h_L |_{X_y}) \right) \label{32}
  \end{equation}
  are surjective for sufficiently general $y$. The reason why (\ref{32})
  implies (\ref{;;}) is that
  \begin{equation}
    \mathcal{F }_y = \Gamma \left( X_{y }, k k_0 \left( K_{X_{y }} + L|_{X_y}
    \right) \otimes \mathcal{O} (H|_{X_y}) \otimes \mathcal{\mathcal{I} }_{k
    k_0} (h_L) |_{X_y} \right) \otimes \mathcal{O}_{Y, y}
  \end{equation}
  for $y$ belonging to a Zariski dense open subset of $Y$ thanks to the Grauert's direct image and semi-continuity theorem (cf. Theorem 10.6 and Theorem 10.7 in \cite{bertin}), and
  \begin{equation}
    \mathcal{\mathcal{I} }_{k k_0} (h_L) |_{X_y} = \mathcal{\mathcal{I} }_{k
    k_0} (h_L |_{X_y})
  \end{equation}
  holds for sufficiently general $y \in Y$.

  The function $\psi$ can be constructed as in {\cite{zhou-zhu1}} as follows.
  Take a smooth concave function $\theta : \mathbb{R}
  \rightarrow (- \infty, 0]$ such that $\theta (t) = t$ when $t \in (-
  \infty, - 1]$, $\theta (t) = 0$ when $t \in [1,{\infty})$ and $0
  \leq \theta' \leq 1$, $\theta'' \geq - C_1$ for all $t \in \mathbb{R}$. Let
  $z = (z_1, \ldots, z_m)$ be the coordinate functions on a coordinate chart
  $U$ centered at $y$. Let $\psi$ be a global quasi-plurisubharmonic function
  on $Y$ defined by
  \begin{equation}
    \psi = \theta (\log |z - y|^{2 m}) . \label{podq}
  \end{equation}
  In fact, $\psi \leq 0$ and $i \partial \overline{\partial} \psi$ is only
  supported on $U' \assign U \bigcap \{z ; - e \leq |z - y| \leq e\}$ and
  \begin{equation}
    i \partial \overline{\partial} \psi |_{U'} \geq \theta''(\log |z - y|^{2 m}) \cdot (i \partial
    \overline{\partial} \log |z - y|^{2 m}) |_{U'} \geq - C_{2 } \omega |_{U'}
  \end{equation}
  for some K{\"a}hler metric $\omega$ and $C_2>0$. By compactness, for any $y\in Y$, we find a uniform constant $C_{3}$ independent of $y$, so that there exists a quasi-plurisubharmonic function $\psi$ such that $\psi\sim 2m\log|z-y|$ near $y$ and $i \partial
    \overline{\partial}\psi\geq -C_{3}\omega$.

  Now applying Lemma \ref{lemma11} (replace $L$ in Lemma \ref{lemma11} by $L
  + (k k_0)^{- 1} H$ equipped with the metric $h^{1 / k k_0}_H \cdot h_L$, where $h_H$ is semi-positive) we
  know that  (\ref{32})  holds true for some $G$ ample enough, which finishes our
  claim of  (\ref{;;}).

  Setting $h^0$ as the complex dimension of the cohomology group $H^0$ and
  using projection formula, one may observe that
$$
    h^0 (X, k k_0 (K_X + L) \otimes H \otimes f^{\ast} G \otimes
    \mathcal{I}_{k k_0} (h_L)) \geq \tmop{rk} \mathcal{F} \cdot h^0 (Y, k k_0
    K_Y)
$$
  \begin{equation}
    = h^0 (F, k k_0 (K_F + L_F) \otimes H|_F \otimes \mathcal{I}_{k k_0} (h_L
    |_F)) \cdot h^0 (Y, k k_0 K_Y) \label{38}
  \end{equation}
  holds for sufficiently general fiber $F$ thanks to Lemma \ref{addti}.

  In order to prove (\ref{112}), let us take a positive integer $k_1$ and a positive
  integer $C_4 $ such that
  \begin{equation}
    h^0 (Y, k k_1 K_Y) \geq C_4 k^{\kappa (Y, K_Y)} \label{311}
  \end{equation}
  holds for every integers $k \gg 0$. Then we obtain
  \[ h^0 (X, k k_2 (K_X + L) \otimes H \otimes f^{\ast} G \otimes
     \mathcal{I}_{k k_2} (h_L)) \geq \]
  \begin{equation}
    C_{5}  k^{\kappa (Y, K_Y)} \cdot h^0 (F, k k_2 (K_F + L_F) \otimes H|_F
    \otimes \mathcal{I}_{k k_2} (h_L |_F)) \label{312}
  \end{equation}
 holds for $k_2 = k_1 k_0$ and every positive enough integer $k$ according to
  (\ref{38}) and (\ref{311}), where $C_5 = C_4 k_0^{\kappa(Y,K_Y)}$. If we choose $H$ to be sufficiently ample then
  \[ h^0 (F, k_{\nu} k_2 (K_F + L_F) \otimes H|_F \otimes \mathcal{I}_{k_{\nu}
     k_2} (h_L |_F)) \geq \]
  \begin{equation}
    C ' k_{\nu}^{\kappa_{\sigma, k_2} (F, K_F + L_F, h_L |_F)} = C '
    k_{\nu}^{\kappa_{\sigma} (F, K_F + L_F, h_L |_F)} \label{combi}
  \end{equation}
  holds true with $C' >0$, by Lemma \ref{simple} and passing to some subsequence $\{k_{\nu}
  \}_{\nu=1}^{\infty}$. Finally, \ inequality (\ref{112}) follows immediately by the fact that
  for any $\nu\gg 0$
  \begin{equation}
    h^0 (X, k_{\nu} k_2 (K_X + L) \otimes H \otimes f^{\ast} G \otimes
    \mathcal{I}_{k_{\nu} k_2} (h_L)) \geq C_{5} C' k_{\nu}^{\kappa_{\sigma} (F,
    K_F + L_F, h_L |_F) + \kappa (Y, K_Y)},
  \end{equation}
  which is due to (\ref{312}) and (\ref{combi}).

 It remains to prove (\ref{112k}). For this purpose, by setting $D_Y=kk_0K_Y + (N-1)G$ and $H=\mathcal O_X$ in Lemma \ref{addti}, it follows that
$$
    h^0 (X, k k_0 (K_X + L)  \otimes f^{\ast} (NG) \otimes
    \mathcal{I}_{k k_0} (h_L)) \geq \tmop{rk} \mathcal{F} \cdot h^0 (Y, k k_0
    K_Y+(N-1)G)
$$
  \begin{equation}
    = h^0 (F, k k_0 (K_F + L_F) \otimes \mathcal{I}_{k k_0} (h_L
    |_F)) \cdot h^0 (Y, k k_0 K_Y+(N-1)G) \label{38k}
  \end{equation}
holds for sufficiently general fiber $F$, where $N$ is any positive integer (actually, we see from its proof that (\ref{38}) still holds if we replace $G$ by $NG$ for any positive integers $N$). Let us take another positive integer $\tilde{k}_{1}$ so that
  \begin{equation}\label{ipad}
  h^0(F,k\tilde{k}_{1}(K_F + L_F)\otimes\mathcal{I}_{k \tilde{k}_{1}}(h_L |_{F}))\geq  \tilde{C}_4 k^{\kappa (F, K_F + L_F, h_L |_F)}
  \end{equation}
  holds for every integers $k\gg0$ from the definition of $\kappa(F,K_F+L_F,h_{L}|_F)$. Therefore, thanks to (\ref{38k}) and (\ref{ipad}), we obtain that
  $$
    h^0(X,k\tilde{k}_{2}(K_X + L)\otimes f^{\ast}(NG)\otimes \mathcal{I}_{k\tilde{k}_2}(h_L))\geq
  $$
  \begin{equation}\label{ipad1}
 \tilde{C}_5  k^{\kappa (F, K_F + L_F, h_L |_F)} \cdot h^0 (Y, k \tilde{k}_2
    K_Y+(N-1)G)
  \end{equation}
  holds for $\tilde{k}_2 = \tilde{k}_1 k_0$ and every $k\gg 0$, where $ \tilde{C}_5= \tilde{C}_4 k_0^{\kappa (F, K_F + L_F, h_L |_F)}$. Now choose $N$ sufficiently large so that after extracting a subsequence $\{k_\nu\}_{\nu=1}^{\infty}$,
  \begin{equation}\label{ipad2}
  h^0 (Y, k_{\nu} \tilde{k}_2 K_Y+(N-1)G)\geq \tilde{C}'k_{\nu}^{\kappa_{\sigma}(Y,K_Y)}
  \end{equation}
  holds for every $\nu$ with $\tilde{C}'>0$. In conclusion, we finally get that
  \begin{equation}
    h^0 (X,k_{\nu} \tilde{k}_2 (K_X + L)\otimes f^{\ast} G \otimes
    \mathcal{I}_{k_{\nu} \tilde{k}_2} (h_L)) \geq \tilde{C}_{5} \tilde{C}' k_{\nu}^{\kappa (F,
    K_F + L_F, h_L |_F) + \kappa _{\sigma}(Y, K_Y)}
  \end{equation}
  for any $\nu\gg 0$ by (\ref{ipad1}) and (\ref{ipad2}), which confirms (\ref{112k}).
\end{proof}

\begin{remark}
From the proof of (\ref{112k}), we may observe that the left hand side in (\ref{112k}) can be replaced by a more optimal number $\kappa_{\sigma,f,\text{hor}}$ defined as in (\ref{jiangluo}).
\end{remark}

\begin{remark}[Analytic proof of Theorem \ref{thm1.1}]
  \label{alter}By the same token as above, we can also give an analytic method
  towards Theorem \ref{thm1.1}. For this purpose, we
  first claim that for any $k \in \mathbb{N}$
  \begin{equation}
    \mathcal{O }_Y (G) \otimes f_{\ast} \mathcal{O}_X (k K_{X / Y} + k (D_X -
    f^{\ast} D_Y) + H) \label{41}
  \end{equation}
  is generically globally generated for some ample line bundle $G$ over $Y$
  and any ample line bundle $H$ over $X$.

  Indeed, the projection formula yields that
  \[ \Gamma (Y, \mathcal{O }_Y (G) \otimes f_{\ast} \mathcal{O}_X (k K_{X / Y}
     + k (D_X - f^{\ast} D_Y) + H)) = \]
  \begin{equation}
    \Gamma (X, k (K_{X / Y} + D_X - f^{\ast} D_Y) + H + f^{\ast} G)
  \end{equation}
  holds. Since $D_X \supset f^{\ast} D_Y$, $D_X
  - f^{\ast} D_Y$ has simple normal crossings. Hence we can also construct $\psi$ as in
  (\ref{podq}) and choose $G$ and $H$ as in Lemma \ref{lemma111}. Then the
  general stalk of the locally free sheaf (\ref{41}) (whose restriction to the complement of some codim $\geq2$ analytic subset in $Y$ is locally free) becomes
  \begin{equation}
    \Gamma (X_y, (k (K_{X_y} + D_X |_{X_y}) + H|_{X_y})\otimes \mathcal{O}_{Y,y} \nobracket
  \end{equation}
  and our claim is an immediate consequence of Lemma \ref{lemma111} by setting
  $L = \mathcal{O}_X (D_X - f^{\ast} D_Y)$.

 The rest of the proof follows by Lemma
  \ref{addti} and the last half (i.e., computing and estimating $h^0$) of
  the proof of Theorem \ref{mainthm}. The only difference is that in order to prove (\ref{spck}), though the use of ample $H$ in (\ref{41}) is unavoidable, we can still apply inequality
  \begin{equation}
  h^0(F,k\tilde{k}_{1}(K_F + D_F)+H_F)\geq h^0(F,k\tilde{k}_{1}(K_F + D_F)) \geq  \tilde{C} k^{\kappa (F, K_F + D_F)}
  \end{equation}
 to substitute (\ref{ipad}).
\end{remark}

\begin{remark}[about the proof of Theorem  \ref{ne}]
Among the first half (i.e. the reduction to the surjective statement and the construction of $\psi$) of the proof of Theorem \ref{mainthm} and Remark \ref{alter} as above, we have seen that Theorem \ref{ne} can be obtained via Lemma \ref{lemma11} and Lemma \ref{lemma111} immediately.
\end{remark}

\section{Iitaka $D$-dimensions and their generalizations}\label{sect5}

The goal of this part is to recall three equivalent definitions of Iitaka
$D$-dimensions and generalize these results to generalized Kodaira
dimensions. The main references for this section are {\cite{mori1}}
and {\cite{ueno00}}.

\subsection{Iitaka $D$-dimension}\label{subsec5.1}

Let us first recall some facts in commutative algebra.

Let $R = \bigoplus_{k \in \mathbb{N}} R_k$ be a graded $\mathbb{C}$-algebra
and an integral domain (or briefly, $\mathbb{C}$-domain). Suppose that $R_k =
0$ for all $k < 0$ and $R_0 = \mathbb{C}$. Let $Q (R)$ be the quotient field
of $R$. Let $R^{(m)} \assign \bigoplus_{k \in \mathbb{N}} R_{k m}$ be the
graded $\mathbb{C}$-subdomain for some $m \in \mathbb{N}$. Let $R^{\#}$ denote
the multiplicative subset of all nonzero homogeneous elements.

The the quotient ring of $R$, denoted by $R^{\#- 1} R$, is also a graded
$\mathbb{C}$-domain. Its degree $0$ part $(R^{\#- 1} R)_0$ is also a field
which will be denoted by $Q ((R))$. It will be easy to check $Q ((R^{(m)})) = Q
((R))$ for any $m \in \mathbb{N}$.

Let $N (R) \assign \{k > 0 ; R_k \neq 0\}$ and let $M_{\geq n} \assign \{k \in
M ; k \geq n\}$ for any subset $M \subset \mathbb{N}$ and $n \in \mathbb{N}$.
Since $N (R)$ is a semigroup, we know that $N (R)_{\geq n} = (d
\mathbb{N})_{\geq n}$ for $n \gg 0$ and some $d = \gcd N (R) \in \mathbb{N}$.

Let $S = \bigoplus_{k \in \mathbb{N}} S_k$ be a graded $\mathbb{C}$-subalgebra
of $R$. For any $k \geq 0$, let $S_0 [S_k]$ denote the graded
$\mathbb{C}$-subdomain generated by $S_0$ and $S_k$.

The following propositions are a list of basic properties in commutative
algebra.

\begin{proposition}
  \label{begin}$R^{(m)} \subset R$ is an integral extension of rings for any
  integers $m \in \mathbb{N}$. In particular, the Krull dimensions of all
  $R^{(m)}$ are all the same.
\end{proposition}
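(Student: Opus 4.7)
The plan is to establish the integrality statement directly from the graded structure, and then to deduce the Krull dimension equality as a classical corollary.

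First, I would verify that each homogeneous element of $R$ is integral over $R^{(m)}$. If $r \in R_k$ is homogeneous of degree $k$, then $r^m$ lies in $R_{km} \subset R^{(m)}$, so $r$ satisfies the monic polynomial $X^m - r^m$ with coefficients in $R^{(m)}$. Since every element of $R$ decomposes as a finite sum of homogeneous components, and since the set of elements of $R$ that are integral over $R^{(m)}$ is itself a subring of $R$ (the integral closure of $R^{(m)}$ in $R$, a standard fact from commutative algebra, cf.\ Atiyah--Macdonald, Corollary 5.3), it follows that every element of $R$ is integral over $R^{(m)}$. Hence $R^{(m)} \subset R$ is an integral extension of rings, as claimed.

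For the ``in particular'' clause, I would then invoke the classical fact that any integral extension $A \subset B$ satisfies $\dim A = \dim B$, which follows from the going-up theorem together with the incomparability of primes lying over a fixed prime (see e.g.\ Atiyah--Macdonald, Corollary 5.9 and Corollary 5.11). Applied to $R^{(m)} \subset R$ for each $m$, this yields $\dim R^{(m)} = \dim R$, so all the subrings $R^{(m)}$ share the common Krull dimension $\dim R$, independent of $m$.

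Since both ingredients are entirely standard, I do not foresee any real obstacle. In particular no Noetherianness hypothesis is required, because the going-up-based argument for $\dim A = \dim B$ remains valid for arbitrary commutative rings and arbitrary integral extensions.
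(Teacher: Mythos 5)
Your proof is correct, and it is the standard textbook argument: homogeneous elements of degree $k$ satisfy the monic polynomial $X^m - r^m$ over $R^{(m)}$ because $r^m \in R_{km} \subset R^{(m)}$; since the integral closure of $R^{(m)}$ in $R$ is a subring and $R$ is generated additively by its homogeneous components, all of $R$ is integral over $R^{(m)}$; and the Krull dimension equality then follows from lying-over, going-up, and incomparability, none of which require a Noetherian hypothesis. The paper itself states Proposition~\ref{begin} without proof, listing it among ``basic properties in commutative algebra,'' so there is no authorial argument to compare against; your write-up would serve perfectly well as the omitted proof.
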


\begin{proposition}
  If $R$ is finitely generated over $R_0 = \mathbb{C}$, then the Krull
  dimension of $R$ equals the transcendence degree of $Q (R)$ over
  $\mathbb{C}$.
\end{proposition}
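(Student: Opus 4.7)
The plan is to reduce to the polynomial case via Noether normalization. First I would invoke the Noether normalization lemma: since $R$ is a finitely generated integral domain over $\mathbb{C}$, there exist elements $y_1, \ldots, y_d \in R$, algebraically independent over $\mathbb{C}$, such that $R$ is a finitely generated module over the polynomial subring $A \assign \mathbb{C}[y_1, \ldots, y_d]$. In particular, the inclusion $A \hookrightarrow R$ is an integral extension.

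Next I would verify that $d$ equals the transcendence degree $\operatorname{tr.deg}_{\mathbb{C}} Q(R)$. Since $R/A$ is integral, the field extension $Q(R)/Q(A)$ is algebraic, so $\operatorname{tr.deg}_{\mathbb{C}} Q(R) = \operatorname{tr.deg}_{\mathbb{C}} Q(A) = d$, the latter being clear as $y_1, \ldots, y_d$ are algebraically independent and generate $Q(A) = \mathbb{C}(y_1, \ldots, y_d)$.

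For the Krull dimension, I would invoke the Cohen--Seidenberg theorems (going-up together with incomparability) for the integral extension $A \subset R$, which imply $\dim R = \dim A$. It remains to check that $\dim \mathbb{C}[y_1, \ldots, y_d] = d$. The lower bound follows from the explicit chain of prime ideals
\begin{equation*}
(0) \subsetneq (y_1) \subsetneq (y_1, y_2) \subsetneq \cdots \subsetneq (y_1, \ldots, y_d),
\end{equation*}
while the upper bound can be obtained either by induction on $d$ using Krull's Hauptidealsatz, or by observing that any strictly ascending chain of primes in $A$ localizes at its minimal nonzero member to give a strict drop in transcendence degree of the residue fields, forcing length at most $d$.

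The main obstacle, if any, is the standard fact that $\dim \mathbb{C}[y_1, \ldots, y_d] = d$, but this is entirely classical; everything else is a formal application of Noether normalization plus the Cohen--Seidenberg theorems. Given that this proposition is a textbook statement used only as a preparatory tool for the Okounkov-body discussion, I expect the authors either sketch this via the outline above or cite a standard commutative algebra reference.
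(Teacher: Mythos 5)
Your argument is the standard textbook proof (Noether normalization, then Cohen--Seidenberg to transfer Krull dimension along the integral extension, then the explicit computation $\dim \mathbb{C}[y_1,\ldots,y_d]=d$), and it is correct. The paper itself gives no proof here: this proposition appears in a list labelled ``basic properties in commutative algebra'' and is simply cited as a known fact, so there is no authorial argument to compare against; your outline is exactly what a reader would be expected to supply or look up.
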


\begin{proposition}
  If $R$ is finitely generated over $R_0$, then for all sufficiently divisible
  $d \in \mathbb{N}$, the graded subalgebra $R^{(d)}$ is finitely generated in
  degree $1$ over $R_0$ {\tmem{(i.e., $R^{(d)} = R_0 [R^{(d)}_1$])}}.
\end{proposition}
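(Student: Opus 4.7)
My plan has two essentially independent parts: (i) find a single integer $d_0$ for which $R^{(d_0)}=R_0[R_{d_0}]$, and (ii) bootstrap from this to every multiple of $d_0$, which is what \emph{sufficiently divisible} is meant to capture.

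Part (ii) is the soft step. It relies on a general observation: if $S=\bigoplus_{k\geq 0}S_k$ is any graded $\mathbb{C}$-algebra with $S_0=\mathbb{C}$ that is generated in degree $1$, i.e. $S=R_0[S_1]$, then every Veronese subalgebra $S^{(m)}$ is generated by $S_m$ over $\mathbb{C}$. Indeed, any element of $S_{nm}$ is a $\mathbb{C}$-linear combination of products $s_1\cdots s_{nm}$ with each $s_i\in S_1$, and regrouping into $n$ blocks of $m$ factors exhibits it as a product of $n$ elements of $S_m$. Applying this observation to the regrading of $R^{(d_0)}$ in which $R_{kd_0}$ has new degree $k$, I get $R^{(kd_0)}=R_0[R_{kd_0}]$ for every $k\geq 1$ as soon as $R^{(d_0)}=R_0[R_{d_0}]$.

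For part (i), I would fix homogeneous generators $x_1,\dots,x_s$ of $R$ over $R_0=\mathbb{C}$ with $\deg x_i=a_i>0$, set $a=\operatorname{lcm}(a_1,\dots,a_s)$, and take $d_0$ to be a sufficiently large multiple of $a$ (for instance $d_0\geq s\cdot\max_ia_i$ will suffice). The $R_0$-module $R_n$ is spanned by the monomials $x_1^{e_1}\cdots x_s^{e_s}$ of degree $\sum e_ia_i=n$, so the desired identity $R_{nd_0}=R_{d_0}\cdot R_{(n-1)d_0}$ for $n\geq 2$, which yields $R^{(d_0)}=R_0[R_{d_0}]$ by induction on $n$, reduces to the following combinatorial splitting claim: for every $(e_1,\dots,e_s)\in\mathbb{N}^s$ with $\sum e_ia_i=nd_0$ and $n\geq 2$, there exist integers $0\leq f_i\leq e_i$ with $\sum f_ia_i=d_0$.

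This combinatorial splitting is the main obstacle. The clean range is $n\geq s$: pigeonholing $\sum e_ia_i=nd_0$ forces some $e_{i_0}a_{i_0}\geq d_0$, and since $a\mid d_0$ makes $d_0/a_{i_0}$ an integer, the choice $f_{i_0}=d_0/a_{i_0}$ and $f_i=0$ otherwise works. For the delicate range $2\leq n<s$, I would use a greedy-with-exchange procedure: add factors one at a time until the running degree first lies in $(d_0-\max_ia_i,\,d_0]$; if the running degree equals $d_0$ we are done, and otherwise the gap can be closed by swapping some $x_i^{a/a_i}$ (of degree $a$) with $x_j^{a/a_j}$ (also of degree $a$), using the divisibility $a\mid d_0$ and the abundance guaranteed by the choice $d_0\gg a$ to ensure enough unused exponents are available to carry out the exchange. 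This bookkeeping step is purely combinatorial and independent of the rest of the paper, but it is where the only real care is required.
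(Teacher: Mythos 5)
Your high-level architecture is sound, and the paper itself offers no proof to compare against (it lists this as a standard commutative-algebra fact), so I evaluate the argument on its own merits. Part (ii) is correct and clean. In part (i), the pigeonhole argument in the range $n \geq s$ is also correct: from $\sum e_i a_i = n d_0 \geq s d_0$ one gets $e_{i_0} a_{i_0} \geq d_0$ for some $i_0$, and $f_{i_0} = d_0/a_{i_0}$ is an admissible integer choice because $a_{i_0} \mid a \mid d_0$ and $f_{i_0}a_{i_0}=d_0\leq e_{i_0}a_{i_0}$.

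The genuine gap is in the range $2 \leq n < s$. The mechanism you propose for closing the residual gap --- ``swapping some $x_i^{a/a_i}$ (of degree $a$) with $x_j^{a/a_j}$ (also of degree $a$)'' --- is degree-preserving by construction: both blocks have degree exactly $a$, so any such exchange leaves the running degree of your partial monomial unchanged and cannot move it from a value in $(d_0-\max_i a_i,\,d_0)$ onto $d_0$. As written this step simply does not work, and invoking ``abundance guaranteed by $d_0\gg a$'' does not repair a move that changes nothing. The combinatorial splitting claim is nonetheless true; here are two ways to obtain it that you could substitute. (1) Write $b_i := a/a_i$ and divide $e_i = q_i b_i + r_i$ with $0 \leq r_i < b_i$; then $\sum r_i a_i$ is a nonnegative multiple of $a$ strictly less than $sa$, and from $a\sum q_i + \sum r_i a_i = n M a$ (where $d_0=Ma$) one gets $\sum q_i \geq nM-(s-1)\geq M$ provided $M \geq s-1$, so one may choose $0\leq g_i\leq q_i$ with $\sum g_i = M$ and set $f_i = g_i b_i$. (2) Alternatively, avoid the small-$n$ range entirely: once $R_{nd_0}=R_{d_0}R_{(n-1)d_0}$ is known for all $n\geq s$, induction gives $R_{nd_0}=R_{d_0}^{\,n-(s-1)}R_{(s-1)d_0}$ for $n\geq s$, and since $R_{d_0}^{\,s-1}\subseteq R_{(s-1)d_0}$ this yields $R_{m(s-1)d_0}\subseteq (R_{(s-1)d_0})^m$ for all $m\geq 2$, i.e.\ $R^{((s-1)d_0)}$ is generated in degree $1$; your Part (ii) then finishes the proof.
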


\begin{proposition}
  \label{en}If $R$ is finitely generated in degree $1$ over $R_0$, then the
  Hilbert function $\tmop{HF} : k \in \mathbb{N} \mapsto \dim_{\mathbb{C}} R_k
  \in \mathbb{N}$ has growth order $\delta - 1$, where $\delta$ is the Krull
  dimension of $R$. Precisely, this means $\tmop{HF} (k) \sim C \cdot
  k^{\delta - 1}$ as $k \rightarrow \infty$ for some $C > 0$.
\end{proposition}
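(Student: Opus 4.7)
The plan is to prove this by combining a graded Noether normalization with the Hilbert--Serre theorem, both applied to the finitely generated graded domain $R$ with $R_0 = \mathbb{C}$ and generators in $R_1$. First, since $R$ is finitely generated over $\mathbb{C}$, an integral domain, and has Krull dimension $\delta$ (which by an earlier proposition equals the transcendence degree of $Q(R)$ over $\mathbb{C}$), I would produce homogeneous elements $y_1,\ldots,y_\delta \in R_1$, algebraically independent over $\mathbb{C}$, such that $R$ is a finite module over the polynomial subring $A \assign \mathbb{C}[y_1,\ldots,y_\delta]$. This is the graded version of Noether normalization: writing $R = \mathbb{C}[R_1]$ as a quotient of a polynomial ring $\mathbb{C}[x_0,\ldots,x_n]$ and choosing a sufficiently generic $\delta$-dimensional subspace $V \subset R_1$, the composition $\mathbb{C}[V] \hookrightarrow R$ is injective (by the transcendence degree count) and $R$ is finite over $\mathbb{C}[V]$. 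The fact that $R$ is generated in degree $1$ is crucial here, since it allows the $y_i$ to be taken in $R_1$ rather than in various higher degrees.

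Next I would apply the Hilbert--Serre theorem to $R$ regarded as a finitely generated graded module over the standard-graded polynomial ring $A$ in $\delta$ variables. Taking a finite graded free resolution of $R$ over $A$ (which exists by Hilbert's syzygy theorem) and computing the alternating sum of Hilbert series of free $A$-modules, one obtains that the Hilbert series of $R$ has the form
\begin{equation*}
H_R(t) \assign \sum_{k\geq 0} (\dim_{\mathbb{C}} R_k)\, t^k = \frac{Q(t)}{(1-t)^{\delta}}
\end{equation*}
for some $Q(t)\in\mathbb{Z}[t]$. To pin down the leading behavior it then suffices to show that the pole order at $t=1$ is exactly $\delta$, i.e.\ that $Q(1)>0$. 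Since $A \hookrightarrow R$ as a graded subring, we have $\dim_{\mathbb{C}} R_k \geq \dim_{\mathbb{C}} A_k = \binom{k+\delta-1}{\delta-1}$, which forces the pole order to be at least $\delta$; it cannot exceed $\delta$ since $(1-t)^{\delta} H_R(t) = Q(t)$ is a polynomial. Hence $Q(1)>0$, and a standard partial-fraction expansion yields
\begin{equation*}
\dim_{\mathbb{C}} R_k = \frac{Q(1)}{(\delta-1)!}\, k^{\delta - 1} + O(k^{\delta-2})
\end{equation*}
for $k \gg 0$, giving the desired asymptotic $\mathrm{HF}(k) \sim C\cdot k^{\delta-1}$ with $C = Q(1)/(\delta-1)! > 0$.

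The only delicate point is the graded Noether normalization in degree $1$: one must check that a generic $\delta$-dimensional subspace of $R_1$ produces algebraically independent elements over which $R$ is module-finite, and that $\delta$ is indeed the right target dimension (via Krull dimension = transcendence degree of $Q(R)$). Once this is in place, the remainder of the argument is a direct invocation of Hilbert--Serre plus the elementary comparison $A_k \hookrightarrow R_k$; the positivity of the leading constant is then automatic from $Q(1)>0$.
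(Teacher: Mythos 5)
Your proof is correct. The paper lists this proposition without proof among ``a list of basic properties in commutative algebra''; your argument---graded Noether normalization with generators in degree one (available since $\mathbb{C}$ is infinite), Hilbert--Serre giving $H_R(t)=Q(t)/(1-t)^\delta$, and the lower bound $\dim_{\mathbb{C}} R_k \geq \binom{k+\delta-1}{\delta-1}$ from the inclusion $A_k\hookrightarrow R_k$ to pin down the pole order and hence $Q(1)>0$---is the standard textbook proof of this fact.
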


Let $X$ be a smooth projective variety, $Q (X)$ be the rational function field of $X$ and $D$ be a Cartier divisor on
$X$. Let $L = \mathcal{O}_X (D)$ be the corresponding invertible sheaf of $D$.

From now on, we always set $R (X, L) : = R = \bigoplus_{k \in \mathbb{N}}
R_k$ and $$R_k \assign H^0 (X, \mathcal{O}_X (k D)) = H^0 (X, k L)$$ be the
associated $\mathbb{C}$-graded algebra. We also set $N (X, L) \assign N (R)$,
$Q ((X, L)) \assign Q ((R))$. Note $Q ((R))$ is algebraically
  closed in $Q (X)$ (cf. Proposition 1.4 in {\cite{mori1}}).

For any $k \in N (X, L)$, $R_k$ induces a Kodaira meromorphic mapping (cf. examples below Definition \ref{pv})
\begin{equation}
  \Phi_{|k L|} : X \dashrightarrow \mathbb{P} (H^0 (X, k L)^{}),
\end{equation}
which is a morphism outside the base locus. When $N (X, L) = \varnothing$, set
$\kappa (X, L)$ to be $- \infty$. When $N (X, L) \neq \varnothing$, the Iitaka
$D$-dimension $\kappa (X, L)$ or $\kappa (X, D)$ is defined as either one of
the following numbers.

\begin{proposition}\label{comp-1}
  \label{prk;}The following numbers are equal when $N (X, L) \neq
  \varnothing$:
  \begin{enumeratenumeric}
    \item $\kappa^{(1)} (X, L) \assign${\tmem{}}$\text{} \tmop{tr} .
    \deg_{\mathbb{C}} R (X, L) - 1 = \tmop{tr} . \deg_{\mathbb{C}} Q ((X,
    L))$;

    \item $\kappa^{(2)} (X, L) \assign \max_{k \in N (X, L)} \dim \Phi_{|k L|}
    (X)$;

    \item $\kappa^{(3)} (X, L) \assign \max \{\nu \in \mathbb{N} ; \limsup_{k
    \rightarrow \infty} \frac{h^0 (X, k L)}{k^{\nu}} > 0\}$.
  \end{enumeratenumeric}
\end{proposition}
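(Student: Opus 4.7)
The plan is to establish the chain $\kappa^{(1)}=\kappa^{(2)}\leq \kappa^{(3)}\leq \kappa^{(1)}$, using the commutative-algebra preliminaries (Propositions \ref{begin}--\ref{en}) together with a geometric reduction via the Kodaira maps $\Phi_{|kL|}$.

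First I would prove $\kappa^{(1)}=\kappa^{(2)}$ by a function-field comparison. For each $k\in N(X,L)$, a basis $s_{0},\ldots,s_{N_{k}}$ of $R_{k}$ gives $\mathbb{C}(\Phi_{|kL|}(X))=\mathbb{C}(s_{1}/s_{0},\ldots,s_{N_{k}}/s_{0})$, so its transcendence degree over $\mathbb{C}$ equals $\dim \Phi_{|kL|}(X)$. Each such subfield sits inside $Q((X,L))$; conversely, any ratio $a/b\in Q((X,L))$ with $a,b\in R_{k}$ lies in $\mathbb{C}(\Phi_{|kL|}(X))$. Hence $Q((X,L))$ is the directed union of these subfields as $k$ varies in $N(X,L)$ ordered by divisibility, and since transcendence degrees are bounded by $\dim X$, the supremum is attained at some finite $k$. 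This yields $\kappa^{(2)}=\mathrm{tr.deg}_{\mathbb{C}} Q((X,L))=\kappa^{(1)}$.

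Next, for $\kappa^{(2)}\leq \kappa^{(3)}$, I would fix $k$ realizing $\dim\Phi_{|kL|}(X)=\kappa:=\kappa^{(2)}$, let $W\subset \mathbb{P}(H^{0}(X,kL)^{\ast})$ be the (reduced, irreducible) Zariski closure of the image, and form the graded subalgebra $S=\bigoplus_{m\geq 0} S_{m}$ of $R(X,L)^{(k)}$ with $S_{m}$ the image of the multiplication map $\mathrm{Sym}^{m} R_{k}\to R_{mk}$. Under $\Phi_{|kL|}^{\ast}$, $S$ is identified with the homogeneous coordinate ring of $W$, which is finitely generated in degree $1$ and has Krull dimension $\kappa+1$. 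Proposition \ref{en} then gives $\dim_{\mathbb{C}} S_{m}\sim C\cdot m^{\kappa}$ for some $C>0$, and the inclusion $S_{m}\subset R_{mk}$ forces $h^{0}(X,mkL)\geq C m^{\kappa}$. Hence $\kappa^{(3)}\geq \kappa=\kappa^{(2)}$.

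The main obstacle will be the reverse direction $\kappa^{(3)}\leq \kappa^{(1)}$, where one has to convert algebraic information about $Q((X,L))$ into a polynomial upper bound on $h^{0}(X,mL)$. My plan here is geometric: take a smooth birational modification $\pi:\tilde X\to X$ on which $\Phi_{|k_{0}L|}$ becomes a genuine morphism $\tilde\phi:\tilde X\to W$ with $\dim W=\kappa^{(2)}$, and apply the Stein factorization $\tilde\phi=g\circ h$ with $h:\tilde X\to Z$ connected and $g:Z\to W$ finite, so $\dim Z=\kappa^{(2)}$. Because $\mathrm{tr.deg}_{\mathbb{C}(W)}Q((X,L))=\kappa^{(1)}-\kappa^{(2)}=0$ by the first step, any two sections of $mk_{0}L$ have ratio algebraic over $\mathbb{C}(W)$ of uniformly bounded degree, so pushing forward via $h$ realises $\pi^{\ast}(mk_{0}L)$-sections as (a bounded-rank piece of) sections of a line bundle on the $\kappa^{(2)}$-dimensional variety $Z$. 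The $h^{0}$ on $Z$ grows at most like $m^{\kappa^{(2)}}$, which yields $\kappa^{(3)}\leq \kappa^{(2)}=\kappa^{(1)}$ and closes the cycle. The delicate step is controlling the rank of the push-forward uniformly in $m$; this is where I expect to invest the most care, and it can be handled either by choosing $k_{0}$ so that $\Phi_{|k_{0}L|}$ separates generic fibres of the Iitaka fibration, or by a direct algebraic-dependence argument building on Proposition \ref{begin}.
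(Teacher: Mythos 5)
Your plan $\kappa^{(1)}=\kappa^{(2)}\leq\kappa^{(3)}\leq\kappa^{(1)}$ follows the classical Iitaka--Mori route. This is the first of the two strategies the paper itself alludes to in its discussion after Proposition \ref{comp-1}; the paper's own complete argument (given for the generalized version, Proposition \ref{eqji}) takes the second route, identifying both $\kappa^{(2)}$ and $\kappa^{(3)}$ with the dimension of a Newton--Okounkov body via Lemma \ref{resp} and Lemma \ref{lema41kk12}, which sidesteps the Iitaka fibration entirely and handles the upper bound by counting lattice points in a strongly convex cone. Your Steps 1 and 2 are correct: Step 1 is essentially Mori's Proposition 1.4, and Step 2 is a clean application of Proposition \ref{en} to the homogeneous coordinate ring of the image of $\Phi_{|kL|}$.

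Step 3 has a genuine gap. You identify the delicate point as ``controlling the rank of the push-forward uniformly in $m$,'' but a uniform rank bound on $\mathcal{G}_m := h_*\pi^*(mk_0L)$ is not sufficient to conclude $h^0(Z,\mathcal{G}_m)=O(m^{\kappa^{(2)}})$: a rank-one torsion-free sheaf on a $d$-dimensional projective variety $Z$ can have arbitrarily large $h^0$ (take $\mathcal{O}_Z(NA)$ with $A$ ample and $N\to\infty$). In addition to the rank control, you must also bound the positivity of the sheaves $\mathcal{G}_m$ --- for example by exhibiting a fixed ample $B$ on $Z$ and an injection $\mathcal{G}_m\hookrightarrow \mathcal{O}_Z(mB)^{\oplus r}$. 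In the Iitaka--Mori argument this comes from an effectivity comparison of the form $\pi^*(k_0L)\leq h^*B+E$ on a suitable model, with $E$ effective and independent of $m$, and separately the boundedness of $h^0$ on a very general fiber of $h$, which is the content of the Iitaka fibration theorem ($\kappa(F,L|_F)=0$, Mori's Theorem 1.11, whose multiplier-ideal analogue is the paper's Theorem \ref{below}). Neither ingredient follows from the rank claim you focus on, and your two proposed fallbacks (separating fibres, or an algebraic-dependence argument from Proposition \ref{begin}) address the rank issue but not the positivity one. To close the gap you should either cite Mori's Proposition 1.14/Theorem 1.12 directly for the upper bound, or reproduce both the rank bound and the $\pi^*(k_0L)\leq h^*B+E$ comparison, or switch to the Okounkov-body method, which yields $\kappa^{(2)}=\kappa^{(3)}=\dim\Delta$ in one stroke.
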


Let us briefly explain the idea to the proof of Proposition \ref{comp-1}.
It follows easily from the fact that
    \begin{equation}
      Q ((X, L)) = Q (\tmop{Im} \Phi_{|k L|})
    \end{equation}
    for all $k \in N (X, K_X + L)$ sufficiently large (cf. Proposition 1.4 in {\cite{mori1}}) that $\kappa^{(1)}(X,L)=\kappa^{(2)}(X,L)$.

    To see the reason why $\kappa^{(2)}(X,L)=\kappa^{(3)}(X,L)$, there are, to the best of the authors' knowledge, at least two ways to show this result. On one hand, the first strategy relies on the existence of Iitaka fibration (cf. Theorem 1.12 and Corollary 1.13 in \cite{mori1}), which aims at reducing the problems on the line bundle $L$ to a big line bundle on a smooth model of $\tmop{Im} \Phi_{|kL|}$. On the other hand, by fixing a faithful $\mathbb{Z}^{n}$-valued valuation $v: Q(X)\backslash \{0\}\rightarrow \mathbb{Z}^n$ on $X$ a priori, the second strategy (cf. Theorem 3.3 + Corollary 1.16 in \cite{kk12}) considers a general graded algebra $R\subset Q(X)$ (in this case we set $R=R(X,L)$) of almost integral type and applies some basic facts from convex geometry (cf. Theorem 1.14 in \cite{kk12}) to obtaining that both $\kappa^{(2)}(X,L)$ and $\kappa^{(3)}(X,L)$ coincide with the dimension of its Newton convex body of $\Delta_{v}(X,L)$ related to $R=R(X,L)$. We will see in the next subsection that how these two methods can be generalized, when considering the graded subalgebra containing multiplier ideal sheaf and assuming $X$ is merely compact (i.e. possibly non-algebraic).

In the special case when $R(X,L)$ is a finitely generated algebra, $\kappa^{(2)}(X,L)=\kappa^{(3)}(X,L)$ can be proved by directly applying Proposition \ref{begin}-\ref{en}.  In fact, if $R(X,L)$ is finitely generated, then by choosing $d$ sufficiently divisible, 
we obtain
    $\kappa^{(3)}(X,L) = \kappa^{(3)}(X, dL) = \dim R^{(d)} - 1 = \tmop{tr} .
    \deg_{\mathbb{C}} Q ((R^{(d)})) = \kappa^{(2)}(X, dL) = \kappa^{(2)}(X,L)$.

\subsection{Generalized Kodaira dimension}

In this part, we will still adopt the notation as in subsection
\ref{subsec5.1}. Given a vector space $V$, the notation $\mathbb{P} V$ stands
for all $1$-dimensional quotients of $V$ or all hyperplanes of $V^{\ast}$.

Let $X$ be a compact (connected) complex manifold. Let $L$ be
a pseudoeffective line bundle over $X$ equipped with a singular hermitian
metric $h_L$ whose curvature current is semi-positive. The following results in
this subsection can all be generalized when $L$ is a $\mathbb{Q}$-bundle.

Let $\mathcal{I}_k (h_L)$ be the $k$-multiplier ideal sheaf as in
(\ref{ddjo}), then
\begin{equation}\label{aftd}
S_k := H^0 (X, k (K_X + L) \otimes \mathcal{I}_k (h_L))
\end{equation}
contains
all holomorphic sections $s$ of $\mathcal{O}_X (k (K_X + L))$ on $X$ and
\begin{equation}
  \|s\|_k \assign (\int_X |s|^{2 / k} h_L)^{k / 2} < \infty . \label{1830}
\end{equation}
Note although at each $k$-level $\| \cdot \|_k$ does not satisfy the triangle
inequality (hence not a norm), it still can be checked that
\begin{equation}
  \|s_1 + s_2 \|_k \leq 2^{{k}} (\|s_1 \|_k +\|s_2 \|_k)
  \label{daffsa}
\end{equation}
holds for any $s_1, s_2 \in S_k$.

From now on, let us take $R _k = H^0(X, k(K_X +L))$
and $R=\bigoplus_{k\in\mathbb{N}}R_k$. Set
\begin{equation}
  S = \bigoplus_{k \in \mathbb{N}} S_k \subset R, \label{notj}
\end{equation}
where $S_k$ is defined as in (\ref{aftd}).
By the basic fact that $\mathcal{I}_k (h_L)$ is a coherent sheaf, it is
recognized that $S_k$ is a linear subspace of $R_k$. Since for $s_{m}\in S_m$ and $s_l\in S_l$
\begin{equation}
  \label{jdil} \|s_m \cdot s_l \|_{m + l} \leq \|s_m \|_m \cdot \|s_l \|_l
\end{equation}
holds by H{\"o}lder inequality, $S$ becomes a graded  $\mathbb{C}$-subalgebra of $R$.

Let $ Q ((X, K_X + L, h_L))$ and $ N
(X, K_X + L, h_L)$ denote the  degree zero part of the quotient ring of $S$ and respectively the corresponding semigroup of $S$ (see section 4.1). Let $\mathbb{P} H^0 (X, k (K_X + L)
\otimes \mathcal{I}_k (h_L))$ be the projective space of all hyperplanes of
$H^0 (X, k (K_X + L) \otimes \mathcal{I}_k (h_L)) .$

\begin{definition}\label{pv}
  {\tmem{(Definition 2.2 in {\cite{ueno00}})}} Let $X, Y$ be two complex
  spaces. A mapping from $X$ to the power set of $Y$, denoted by $\Phi : X
  \dashrightarrow Y$, is called a {\tmstrong{{\tmem{meromorphic mapping}}}},
  if the followings are satisfied:
  \begin{enumeratenumeric}
    \item The graph $G \assign \{(x, y) \in X \times Y ; y \in \varphi (x)\}$
    is an irreducible analytic subset in $X \times Y$;

    \item The projection map $p_X : G \rightarrow X$ is a proper modification.
  \end{enumeratenumeric}
\end{definition}

Take $k \in N (X, K_X + L, h_L)$, then basic examples of meromorphic mappings
\begin{equation}
  \Phi_{|S_k |} : X \dashrightarrow \mathbb{P} H^0 {(X, k (K_X + L) \otimes
  \mathcal{I}_k (h_L))=\mathbb{P} S_k }  \label{expl}
\end{equation}
will be constructed as follows (cf. Example 2.4.1 and 2.4.2 in
{\cite{ueno00}} when $\mathcal{I}_k (h_L)$ is trivial).

Let $\{s_0, \ldots, s_{N_k} \}$ be a basis of $S_k$ and set
\begin{equation}
  \Sigma \assign \{x \in X ; s_0 (x) = \cdots = s_{N_k} (x) = 0\}
\end{equation}
to be a nowhere dense analytic subset of $X$. Then there is a holomorphic
mapping
\begin{equation}
  \Phi: X \backslash \Sigma \rightarrow \mathbb{P}^{N_k}, x \mapsto \{s \in S_k ; s (x) = 0\}
\end{equation}
and $G$ be the closure of the graph of $\Phi$ in $X \times
\mathbb{P}^{N_k}$. One can conclude that $G$ is an (irreducible) analytic subset
and $p_X : G \rightarrow X$ is a proper modification. Hence $G$ induces a
meromorphic map denoted by (\ref{expl}). Let $\tmop{Im} \Phi_{|S_k |}$ be the image of the projection of $G$ onto
$\mathbb{P}^{N_k}$. By Remmert's proper mapping theorem, $\tmop{Im} \Phi_{|S_k |}$
is a projective algebraic variety in $\mathbb{P}^{N_k}$.

\begin{proposition}
  \label{pdju}$Q (\tmop{Im} \Phi_{|S_k |}) = Q ((X, K_X + L, h_L))$ for all $k
  \in N (X, K_X + L, h_L)$ sufficiently large enough.
\end{proposition}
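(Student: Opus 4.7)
The strategy is to adapt the classical proof of Proposition 1.4 in \cite{mori1} for Iitaka $D$-dimensions to the graded subalgebra $S=\bigoplus_k S_k$ carrying the $k$-multiplier ideal sheaves.

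One inclusion is routine: for any $k \in N(X, K_X+L, h_L)$, fixing a basis $\{s_0,\ldots,s_{N_k}\}$ of $S_k$, the field $Q(\tmop{Im}\Phi_{|S_k|})$ is generated over $\mathbb{C}$ by the ratios $s_i/s_j$, each of which is a degree-$0$ element of the quotient ring of $S$, giving $Q(\tmop{Im}\Phi_{|S_k|}) \subseteq Q((X, K_X+L, h_L))$.

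The reverse inclusion rests on the multiplicative structure $S_k \cdot S_l \subseteq S_{k+l}$ furnished by H\"older's inequality (\ref{jdil}). This makes $N(X,K_X+L,h_L)$ a semigroup (so eventually equal to $d\mathbb{N}_{\geq n_0}$ with $d = \gcd N(S)$) and yields inclusions $Q(\tmop{Im}\Phi_{|S_k|}) \subseteq Q(\tmop{Im}\Phi_{|S_{k+l}|})$ whenever $k,l,k+l \in N(S)$. Moreover, for any $\sigma/\tau \in Q((X,K_X+L,h_L))$ with $\sigma, \tau \in S_m$, picking any nonzero $u \in S_{k-m}$ (possible once $k$ is large enough that $k-m \in N(S)$) lets us write $\sigma/\tau = (\sigma u)/(\tau u) \in Q(\tmop{Im}\Phi_{|S_k|})$. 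Thus $Q((X,K_X+L,h_L)) = \bigcup_{k\in N(S)} Q(\tmop{Im}\Phi_{|S_k|})$ as a directed union inside the finitely generated field $Q(X)$, whose transcendence degree over $\mathbb{C}$ is bounded by $\dim X$. The integer-valued, non-decreasing transcendence degrees must therefore stabilize at some $\kappa$ once $k$ is sufficiently large.

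The main obstacle, which I expect to be the core of the proof, is upgrading this stabilization of transcendence degree to an actual equality of fields. For that I would invoke the Iitaka-fibration / Stein-factorization construction: for $k$ sufficiently large, resolve the indeterminacy of $\Phi_{|S_k|}$ to obtain a morphism $\tilde{X} \to \tmop{Im}\Phi_{|S_k|}$ and then Stein-factorize it as $\tilde{X} \to Y^\ast \to \tmop{Im}\Phi_{|S_k|}$, with connected generic fibers on the first map and the second map finite. Then $Q(Y^\ast)$ is algebraically closed in $Q(\tilde{X}) = Q(X)$, and a standard argument analogous to the one in the proof of Theorem 1.11 in \cite{mori1} shows that the finite map $Y^\ast \to \tmop{Im}\Phi_{|S_k|}$ becomes birational for all sufficiently large $k \in N(S)$, so that $Q(\tmop{Im}\Phi_{|S_k|})$ itself is algebraically closed in $Q(X)$. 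Since $Q((X,K_X+L,h_L))$ is sandwiched between $Q(\tmop{Im}\Phi_{|S_k|})$ and $Q(X)$ with the same transcendence degree as the former, algebraic closedness forces the desired equality $Q((X,K_X+L,h_L)) = Q(\tmop{Im}\Phi_{|S_k|})$.
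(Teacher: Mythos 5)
You have identified the two essential ingredients — the directed union structure of $Q((S))$ over $k \in N(S)$ and the finite generation of $Q(X)$ — but you fail to combine them in the simple way that closes the argument, and instead propose a detour that is both unnecessary and circular.

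Once you know $Q((X,K_X+L,h_L)) = \bigcup_{k \in N(S)} Q(\tmop{Im}\Phi_{|S_k|})$ is a directed union of subfields of the finitely generated field $Q(X)$, you are essentially done: any subfield of a finitely generated field extension of $\mathbb{C}$ is itself finitely generated over $\mathbb{C}$, so $Q((X,K_X+L,h_L))$ is generated by finitely many elements $u_1,\dots,u_n$. Each $u_i$ lies in some $Q(\tmop{Im}\Phi_{|S_{k_i}|})$, and by directedness all of them lie in $Q(\tmop{Im}\Phi_{|S_k|})$ for a single sufficiently large $k$; the reverse inclusion being obvious, equality holds for that $k$ and all larger $k$ in $N(S)$. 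This is exactly the paper's proof: quote Theorem 3.1 of \cite{ueno00} for finite generation of $Q(X)$, deduce finite generation of $Q((S))$, and invoke the commutative-algebra lemma (1.2~i) of \cite{mori1}, which encapsulates precisely the directed-union stabilization you have essentially rediscovered. Stabilization of transcendence degree alone is indeed insufficient (as you correctly worry), but stabilization of the field follows from finite generation of the whole union, not from any geometric input.

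Your proposed Stein-factorization escape route is problematic on two counts. First, establishing that $Q(\tmop{Im}\Phi_{|S_k|})$ is algebraically closed in $Q(X)$ for large $k$ is not a ``standard argument'' in this generalized setting: it is essentially the content of Theorem~\ref{below} (the analogue of Mori's Theorem~1.11), whose proof in the paper invokes Proposition~\ref{pdju} itself, so you would be reasoning in a circle. Second, even granting the factorization $\tilde{X} \to Y^\ast \to \tmop{Im}\Phi_{|S_k|}$, the claim that the finite map $Y^\ast \to \tmop{Im}\Phi_{|S_k|}$ becomes birational for large $k$ is not automatic — it is precisely the nontrivial part of the stabilization, and proving it independently would amount to reproving the commutative-algebra lemma the paper quotes, in a more roundabout geometric guise. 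The algebraic-closedness fact you want is instead proved separately as Proposition~\ref{alcl} in the paper, via integral closedness of $S$ in $R$, and it is neither needed for nor a shortcut to Proposition~\ref{pdju}.
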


\begin{proof}
  By the very basic fact that the (meromorphic) function field $Q (X)$ is
  finitely generated {\tmem{{\tmem{(cf. Theorem 3.1 in {\cite{ueno00}})}}}},
  any subfield of $Q (X)$ will be finitely generated. As a consequence, $Q (X,
  K_X + L, h_L)$ is finitely generated over  $\mathbb{C}$. Then there exists $M > 0$ so
  that $Q ((X, K_X + L, h_L)) = Q ((S_0 [S_k])) = Q (\tmop{Im} \Phi_{|S_k |})$
  for all $k \in N (X, K_X + L, h_L)_{\geq M}$ {\tmem{{\tmem{(cf. (1.2 i) in
  {\cite{mori1}})}}.}}
\end{proof}

\begin{proposition}
  \label{alcl}$Q ((X, K_X + L, h_L))$ is algebraically closed in $Q (X)$.
\end{proposition}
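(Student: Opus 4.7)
The plan is to prove the proposition by a direct algebraic--$L^2$ argument: clear denominators, verify holomorphicity by a local valuation argument, and then verify the multiplier-ideal condition using the pointwise Cauchy bound on the moduli of polynomial roots combined with $L^2$-integration against $h_L$. The argument sidesteps any appeal to Iitaka-type fibrations and only uses the subalgebra structure of $S \subset R$ together with the defining $L^2$-integrability of the $S_k$.

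Suppose $f \in Q(X)$ is algebraic over $Q((X, K_X+L, h_L)) = Q((S))$, and let $f^d + a_1 f^{d-1} + \cdots + a_d = 0$ be its monic minimal polynomial with $a_j \in Q((S))$. By choosing a common denominator in $S$, I can pick $m \in \mathbb{N}$ (divisible by $k_0$, so that $m(K_X+L)$ is a genuine line bundle), together with $c \in S_m$ and $\sigma_j \in S_{jm}$, so that $a_j = \sigma_j/c^j$ for each $j = 1,\ldots, d$. Multiplying the relation by $c^d$ and setting $g := cf$ produces the integral equation
\begin{equation*}
g^d + \sigma_1 g^{d-1} + \sigma_2 g^{d-2} + \cdots + \sigma_d = 0
\end{equation*}
as meromorphic sections of $dm(K_X+L)$. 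The holomorphicity $g \in H^0(X, m(K_X+L))$ then follows from a local valuation argument: in a trivialization, if $g$ had a pole of order $N \geq 1$ along some irreducible hypersurface, then $v(g^d) = -dN$ would be strictly less than $v(\sigma_j g^{d-j}) \geq -(d-j)N$ for every $j \geq 1$, which is incompatible with the cancellation forced by the equation.

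To upgrade $g$ from $R_m$ to $S_m$, I would apply Cauchy's pointwise bound on the moduli of roots of a monic polynomial. In a local trivialization of $m(K_X+L)$, whose $j$-th tensor power trivializes $jm(K_X+L)$, the relation becomes a genuine monic polynomial over $\mathbb{C}$ at each point, so Cauchy's bound yields $|g_{\mathrm{loc}}(x)|^2 \leq 4 \max_{1 \leq j \leq d}|\sigma_{j,\mathrm{loc}}(x)|^{2/j}$. Since the natural $j$-th tensor Hermitian metric on $jm(K_X+L)$ has local weight equal to $j$ times the weight chosen on $m(K_X+L)$, this translates into the intrinsic inequality $|g|^2 \leq 4 \max_j (|\sigma_j|^2)^{1/j}$ of sections of $m(K_X+L) \otimes \overline{m(K_X+L)}$. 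Raising both sides to the $(1/m)$-th power, multiplying by $h_L$, and integrating over $X$ yields
\begin{equation*}
\|g\|_m^{2/m} \;=\; \int_X |g|^{2/m}\, h_L \;\leq\; 4^{1/m}\sum_{j=1}^d \int_X |\sigma_j|^{2/(jm)}\, h_L \;<\; \infty,
\end{equation*}
since each $\sigma_j \in S_{jm}$ guarantees finiteness of $\int_X |\sigma_j|^{2/(jm)} h_L = \|\sigma_j\|_{jm}^{2/(jm)}$. Hence $g \in S_m$, and because $c \in S_m$ as well, we conclude $f = g/c \in Q((S))$, as claimed.

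The only point requiring care is the bookkeeping of intrinsic norms across the line bundles $jm(K_X+L)$: the scaling that the natural $j$-th tensor metric on $jm(K_X+L)$ has weight $j\,\psi_m$ is exactly what is needed for the root-extraction $(|\sigma_j|^2)^{1/j}$ to land back in the same line bundle as $|g|^2$, producing the correct units for the Cauchy bound to yield a legitimate intrinsic $L^2$-estimate. I therefore do not anticipate any substantive further obstacle beyond this bookkeeping.
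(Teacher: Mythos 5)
Your proof is correct, and it takes a genuinely different route from the paper's. The paper proceeds in two stages: it first shows that $S$ is integrally closed in $R$, via a contradiction argument built on the quasi-triangle inequality $\|s_1+s_2\|_k \leq 2^{k}(\|s_1\|_k + \|s_2\|_k)$ and the H{\"o}lder submultiplicativity $\|s_m s_l\|_{m+l}\leq \|s_m\|_m\|s_l\|_l$, and then deduces algebraic closedness of $Q((S))$ in $Q(X)$ by invoking Mori's commutative-algebra lemma $(1.2)$ via the generic-point scheme $\operatorname{Spec} Q(X)$. Your argument is more self-contained and quantitatively sharper: after clearing denominators (legitimate --- with $a_j=t_j/u_j$, $t_j,u_j\in S_{m_j}$, set $c=\prod_i u_i$ of degree $m=\sum_i m_i$ and check $\sigma_j:=a_j c^j=t_j u_j^{j-1}\prod_{i\neq j}u_i^j\in S_{jm}$), you first establish holomorphicity of $g$ by the valuation argument, a step the paper absorbs into the hypothesis $r\in R_\nu$ of integral closedness together with Mori's reduction, and then you control $\|g\|_m$ directly by the pointwise Cauchy root bound $|g_{\mathrm{loc}}|\leq 2\max_j|\sigma_{j,\mathrm{loc}}|^{1/j}$ rather than by the paper's iterated quasi-triangle chain with its $2^{\nu k^2}(k+1)$ constants. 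Your pointwise bound also sidesteps a subtlety in the paper's contradiction argument: the final display $\|r\|_\nu^k\le\frac{k}{k+1}\|r\|_\nu^k$ yields no contradiction when $\|r\|_\nu=\infty$, which is precisely the case that has to be excluded, whereas the Cauchy bound produces a finite majorant directly. Both approaches rest on the same analytic principle (integral dependence over $S$ with holomorphic numerator forces $L^{2/k}$-integrability against $h_L$), but yours is more elementary, bypasses the commutative-algebra machinery, and is tighter; the only superfluous point is the divisibility of $m$ by $k_0$, which is vacuous since in the paper's section $4.2$ the bundle $L$ is taken to be an honest line bundle.
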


\begin{proof}
  Let us first show $S$ is integrally closed in $R$. Take any $r \in R_{\nu}
  (\nu \in \mathbb{N})$ satisfying the following equation
  \begin{equation}
    r^k + s_1 r^{k - 1} + \cdots + s_{k - 1} r + s_k = 0 \label{jjol}
  \end{equation}
  where $s_j \in S_{\nu j}$ for $j = 1, \ldots, k$, we claim that $r \in
  S_{\nu}$ or equivalently
  \begin{equation}
    \|r\|_{\nu} \leq 2^{{\nu k^2}} (k + 1) \max_{j = 1, \ldots k}
    \{\|s_j \|_{j \nu}^{1 / j} \} < \infty . \label{1980}
  \end{equation}
  \quad Indeed, if by contradiction {\tmem{{\tmem{(\ref{1980})}}}} does not
  hold, then
  \begin{equation}\label{compp}
  \|r\|_{\nu}^j > 2^{{\nu k^2}}  (k + 1) \|s_j \|_{ \nu j}
  \end{equation}
  holds for any $j \in [1, k]$. According to {\tmem{{\tmem{(\ref{daffsa})}}
  }}and equation{\tmem{ {\tmem{(\ref{jjol})}}}}, we get
  \[ \|r\|^k_{\nu} = \|r^k \|_{\nu k} \leq 2^{{\nu k}} (\|s_1 r^{k -
     1} \|_{\nu k} +\|r^k + s_1 r^{k - 1} \|_{\nu k}) \]
  \[ \leq 2^{{\nu k}} (\|s_1 r^{k - 1} \|_{\nu k} + 2^{{\nu
     k}} (\|r^k + s_1 r^{k - 1} + s_2 r^{k - 2} \|_{\nu k} +\|s_2 r^{k - 2}
     \|_{\nu k})) \leq \ldots . \]
  \begin{equation}
    \leq 2^{{\nu k^2}} (\|s_1 r^{k - 1} \|_{\nu k} + \cdots +\|s_k
    \|_{\nu k}) \label{jjik} .
  \end{equation}
  It follows from {\tmem{{\tmem{(\ref{jdil}),}}}}
  {\tmem{{\tmem{(\ref{1830})}}}} and (\ref{compp}) that
  \begin{equation}
    \|s_j r^{k - j} \|_{\nu k} \leq \|s_j \|_{\nu j} \cdot \|r^{k - j} \|_{\nu
    (k - j)} = \|s_j \|_{\nu j} \cdot \|r  \|_{\nu}^{k - j} \leq
    \frac{1}{2^{{\nu k^2}}  (k + 1)} \|r\|^k_{\nu} \label{kkod}
  \end{equation}
  for $j \in [1, k]$. Thanks to
  ({\tmem{{\tmem{{\tmem{{\tmem{\ref{jjik})}}}}}}}} and
  {\tmem{{\tmem{(\ref{kkod})}}}}, we eventually get the contradiction:
  \begin{equation}
    \|r\|^k_{\nu} \leq 2^{{\nu k^2}} \cdot k \cdot
    \frac{1}{2^{{\nu k^2}}  (k + 1)} \|r\|^k_{\nu} = \frac{k}{k + 1}
    \|r\|^k_{\nu}
  \end{equation}
  as $r\neq 0$. Hence {\tmem{{\tmem{(\ref{1980})}}}} has been proved.

  The rest of the proof will be almost the same as Proposition 1.4
  in{\tmem{{\tmem{ {\cite{mori1}}}}}}. Let $\xi$ be the $Q (X)$-scheme
  $\tmop{Spec} Q (X)$ and $\xi \rightarrow X$ be the inclusion (see Ex2.7 in GTM52 for the details of this map). Let $(K_X +
  L)_{\xi}$ be the pull back of $K_X + L$. Since $S$ is integrally closed in
  $R$, $R$ is integrally closed in $R (\xi, (K_X + L)_{\xi})$, we know that
  $S$ is integrally closed in $R (\xi, (K_X + L)_{\xi})$. Therefore, $Q ((S))$
  is algebraically closed in $S^{\#- 1} R (\xi, (K_X + L)_{\xi})$ by $(1.2)$
  in {\tmem{{\tmem{{\cite{mori1}}}}}}. Note $Q (X) = R (\xi, (K_X +
  L)_{\xi})_0 \subset S^{\#- 1} R (\xi, (K_X + L)_{\xi})$, we can now obtain
  that $Q ((S))$ is algebraically closed in $Q (X)$.
\end{proof}

Consider the following three numbers when $N (X, K_X + L, h_L) \neq
\varnothing$:
\begin{equation}\label{dko}
  \kappa^{(1)} (X, K_X + L, h_L) : = \tmop{tr} . \deg_{\mathbb{C}} Q ((X, K_X
  + L, h_L))
\end{equation}
\begin{equation}
  \kappa^{(2)} (X, K_X + L, h_L) : = \max_{k \in N (X, K_X + L, h_L)} \dim
  \tmop{Im} \Phi_{|S_k |} \label{jkp}
\end{equation}
\begin{equation}
  \kappa^{(3)} (X, K_X + L, h_L) : = \max \{m \in \mathbb{N} ; \limsup_{k
  \rightarrow \infty} \frac{h^0 (X, k (K_X + L) \otimes \mathcal{I}_k
  (h_L))}{k^m} > 0\} \label{pkj} .
\end{equation}
Note $ \kappa^{(3)} (X, K_X + L, h_L)$ has already been introduced in \cite{zhou-zhu1}.

The following proposition illustrates how these numbers vary under blow-ups.

\begin{proposition}
  \label{oo}Let $\mu : X' \rightarrow X$ be a blow-up of $X$ with smooth center $Z$ of $\tmop{codim} \geq 2$, then inequality
  \begin{equation}
    \kappa^{(j)} (X, K_X + L, h_L) = \kappa^{(j)} (X', K_{X'} + L', h_{L'})
    \label{21}
  \end{equation}
  holds for $j = 1, 2, 3$, where $(L', h_{L'}) = (\mu^{\ast} L, \mu^{\ast}
  h_L)$.
\end{proposition}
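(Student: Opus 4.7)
The plan is to reduce all three equalities to a single claim: the pullback $\mu^{\ast}$ induces a graded $\mathbb{C}$-algebra isomorphism $S(X,K_X+L,h_L) \cong S(X',K_{X'}+L',h_{L'})$ between the two algebras defined as in (\ref{notj}). From this, the three equalities in (\ref{21}) follow by direct inspection of the definitions (\ref{dko})--(\ref{pkj}): the case $j=3$ holds because corresponding graded pieces have the same dimension; the case $j=1$ holds because isomorphic algebras yield the same quotient field, hence the same transcendence degree; and the case $j=2$ holds because the Kodaira maps $\Phi_{|S_k|}$ on $X'$ and $X$ fit into a commutative diagram via the surjective $\mu$, so their images coincide as subvarieties of a single projective space once $\mathbb{P}S_k(X)\cong\mathbb{P}S_k(X')$ is identified through $\mu^{\ast}$.

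First I would record the key identity $K_{X'}+L' = \mu^{\ast}(K_X+L) + E_0$, where $E_0 = K_{X'/X}$ is an effective $\mu$-exceptional divisor; the effectivity uses $\operatorname{codim}_X Z \geq 2$ in an essential way. Multiplying by $k$ gives $k(K_{X'}+L') = \mu^{\ast}(k(K_X+L)) \otimes \mathcal{O}_{X'}(kE_0)$. In local coordinates around a point of $E_0$, a section of $\mu^{\ast}(k(K_X+L))$ locally written as $F(dz)^{\otimes k}$ tensored with a local frame of $kL$ pulls back to $(\mu^{\ast}F)\cdot J_\mu^k\cdot(dw)^{\otimes k}$ tensored with $\mu^{\ast}(\text{frame})$, where $\operatorname{div}(J_\mu) = E_0$.

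The core step is the bijectivity of $\mu^{\ast}\colon S_k(X,K_X+L,h_L) \to S_k(X',K_{X'}+L',h_{L'})$, which hinges on the change-of-variables identity
\[
\int_X |F|^{2/k} e^{-\varphi_L}\, dV_X = \int_{X'} |\mu^{\ast}F|^{2/k}|J_\mu|^2 e^{-\mu^{\ast}\varphi_L}\, dV_{X'} = \int_{X'} |\mu^{\ast}F\cdot J_\mu^k|^{2/k} e^{-\mu^{\ast}\varphi_L}\, dV_{X'}.
\]
This simultaneously shows $\|\mu^{\ast}\sigma\|_k = \|\sigma\|_k$, so $\mu^{\ast}\sigma$ lies in $S_k(X')$, and the map is injective (as $\mu$ is dominant). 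For surjectivity, given $\tau\in S_k(X')$, the generic isomorphism property of $\mu$ lets us form $(\mu^{-1})^{\ast}\tau$ as a holomorphic section of $k(K_X+L)$ on $X\setminus Z$, which extends to a section $\sigma$ on $X$ by Hartogs' theorem since $\operatorname{codim}_X Z \geq 2$; the same change-of-variables formula then gives $\|\sigma\|_k = \|\tau\|_k < \infty$, and $\mu^{\ast}\sigma=\tau$ follows from the two agreeing on the dense open set $X'\setminus E_0$. The ring-homomorphism property of $\mu^{\ast}$ on the ambient canonical rings then promotes these degree-$k$ isomorphisms into a graded algebra isomorphism.

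The main obstacle is the bookkeeping in identifying $\mu^{\ast}\sigma$---natively a section of $\mu^{\ast}(k(K_X+L))$---correctly inside $H^0(X',k(K_{X'}+L')\otimes\mathcal{I}_k(h_{L'}))$: one must track how the canonical section $s_{E_0}^k$ of $\mathcal{O}_{X'}(kE_0)$ (equivalently the Jacobian factor $J_\mu^k$) realizes the inclusion $\mu^{\ast}(k(K_X+L))\hookrightarrow k(K_{X'}+L')$, and verify that the induced norm agrees with the one used in the definition of $\mathcal{I}_k(h_{L'})$. Once this identification is in place, the proposition becomes a short deduction from the algebra isomorphism, with Hartogs' theorem handling the codimension hypothesis on $Z$.
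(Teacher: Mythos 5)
Your proposal is correct and follows essentially the same route as the paper: both reduce the three equalities to showing that $\mu^{\ast}$ identifies the graded pieces $S_k$ on $X$ and $X'$ (in the paper this is phrased as the sheaf identity $\mu_{\ast}(\mathcal{O}_{X'}(kK_{X'}+kL')\otimes\mathcal{I}_k(h_{L'})) = \mathcal{O}_X(kK_X+kL)\otimes\mathcal{I}_k(h_L)$, which yields the same thing on global sections), and both rest on the same two ingredients, namely the change-of-variables invariance of $\int c_n(f\wedge\bar f)^{1/k}e^{-2\varphi_L}$ under $\mu$ and the codimension-$\geq 2$ extension theorem for the converse direction. Your extra bookkeeping with $E_0=K_{X'/X}$ and the Jacobian factor is a harmless unpacking of what the paper treats implicitly.
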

\begin{proof}
  By definition of $\kappa^{(j)} (X, K_X + L, h_L)$, since $H^0 (X', k (K_{X'}
  + L') \otimes \mathcal{I}_k (h_{L'})) = H^0 (X, \mu_{\ast} (k  K_{X'}
  \otimes k  L' \otimes \mathcal{I }_k (h_{L'} )))$, it will be sufficient for
  us to show
  \begin{equation}
    \mu_{\ast} (\mathcal{O}_{X'} (k  K_{X'} + k  L') \otimes \mathcal{I }_k
    (h_{L'} )) = \mathcal{O}_X (k  K_X + k  L) \otimes \mathcal{I }_k (h_L )
    \label{22}
  \end{equation}
  for any $k \in \mathbb{N}$. Let us write $h_L = e^{- 2\varphi_L}$ on some
  open coordinate $U \subset X$ which trivializes $L$, then $h_{L'} = e^{- 2
  \mu^{\ast} \varphi_L}$ on $\mu^{- 1} (U)$.

  By the definition of multiplier ideal sheaves, given any open subset $U\subset X$, the coherent analytic sheaf $\mathcal{O}_X (k K_X + k L)
  \otimes \mathcal{I }_k (h_L )(U)$ consists of all holomorphic $k$-canonical (i.e. $K_{U}^{\otimes k}$-valued) forms $f$ on $U$ such that
  \begin{equation}
    \int_V c_n (f \wedge \overline{f})^{1 / k} e^{- 2 \varphi_L} < \infty
    \label{190}
  \end{equation}
  holds for any $V \Subset U$. The change of variable formula yields that
  \begin{equation}
    \int_{\mu^{- 1} (V)} c_n (\mu^{\ast} f \wedge \overline{\mu^{\ast} f})^{1
    / k} e^{- 2 \mu^{\ast} \varphi_L} = \int_V c_n (f \wedge
    \overline{f})^{1 / k} e^{- 2 \varphi_L} < \infty \label{140} .
  \end{equation}
  Hence $\mu^{\ast} f$ is the pull-back holomorphic $k$-canonical form on $\mu^{- 1}
  (U)$ such that (\ref{140}) holds. Therefore, $\mu_{\ast} (k  K_{X'} \otimes
  k  L' \otimes \mathcal{I }_k (h_{L'} )) \supset k  K_X \otimes k  L \otimes
  \mathcal{I }_k (h_L)$.

  On the other side, if we a priori know that a holomorphic $k$-canonical form $\mu^{\ast} f$ on $\mu^{- 1} (U)$ satisfies (\ref{140}), then $f$
  must be a holomorphic $k$-canonical form on $U$ by the fact that $\mu$ is an isomorphism outside $\mu^{-1}(Z)$. Actually, this can be done according to the codim $\geq 2$
  extension theorem for holomorphic functions. Moreover, it must also satisfy
  (\ref{190}). As a result, $\mu_{\ast} (k  K_{X'} \otimes k  L' \otimes
  \mathcal{I }_k (h_{L'} )) \subset k  K_X \otimes k  L \otimes \mathcal{I }_k
  (h_L)$ holds. This finishes the proof of (\ref{22}).
   \end{proof}

Now let us turn to compare $\kappa^{(1)}, \kappa^{(2)}$ and $\kappa^{(3)}$. Our next goal is to show these three numbers all coincide with the dimension of an appropriate Newton-Okounkov body associated to $X$, $L$ and $h_L$, which also equals the so-called generalized Kodaira dimension. To this end, let us first recall some basic constructions of the Newton-Okounkov body of any given graded algebra $S\subset Q(X)$, where $Q(X)$ is the rational function field of a projective manifold $X$. A systematic study of the Newton-Okounkov body can be referred to \cite{lm09} and \cite{kk12}, which was based on the pioneering work of A. Okounkov (cf. \cite{ok96}). The following notations and basic facts are mainly taken from \cite{kk12}.

Let $P\subset \mathbb{Z}^n$ be a semigroup. Let $G$ be the subgroup of $\mathbb{Z}^n$ generated by $P$, $L$ be the subspace of $\mathbb{R}^n$ spanned by $P$. Let $C$ be the smallest closed convex cone with apex at the origin generated by $P$. The regularization $\tilde{P}$ of $P$ is defined to be the semigroup $G\bigcap C$ contained in $L$. Now assume that the cone $C$ is strongly convex (i.e. the linear subspace contained in $C$ is only zero) and set $\dim L=q+1$. Fix a rational half-space $M\subset L$ (i.e. $\partial M$ can be spanned by rational vectors) containing $P$. Take a linear map $\pi_M:L\rightarrow \mathbb{R}$ so that $\text{ker } \pi_M = \partial M$, $\pi_M (L\bigcap\mathbb{Z}^n)=\mathbb{Z}$ and $\pi_M (M\bigcap \mathbb{Z}^n)=\mathbb{Z}_{\geq 0}$.

Let $H_P (k):= \text{Card}(P\bigcap\pi_M^{-1}(k))$ and $H_{\tilde{P}}(k):= \text{Card}(\tilde{P}\bigcap \pi_M^{-1}(k))$ be the Hilbert function of $P$ and $\tilde{P}$ respectively, where $\text{Card}(\cdot)$ stands for the cardinality of a set. Then the convex body (i.e. a compact convex set, which can be confirmed due to the fact that $C$ is strongly convex) $\Delta(P):=\tilde{P}\bigcap \pi_M^{-1}(1)$ of dimension $q$ is called the \emph{Newton-Okounkov} body of the semigroup $P$. Let us fix the notations $m(P),\text{ind}(P)$ standing for the index of the subgroup $\pi_{M}(G)$ in $\mathbb{Z}$, and the index of the subgroup $G \bigcap \partial M$ in $\mathbb{Z}^{n-1}\times \{0\}$ respectively.

Let $X$ be a projective manifold and $Q(X)$ be the rational function field. Let $S=\bigoplus_{k\in\mathbb{N}} S_k\subset Q(X)$ be a graded algebra of \emph{almost integral type} (see the precise definition in section 2.3 in \cite{kk12}). The basic example for such $S$ includes all graded subalgebras of $R(X,L)$ associated to an arbitrary line bundle $L$ over $X$ (cf. Theorem 3.7 in \cite{kk12}). Fix a faithful $\mathbb{Z}^n$-valued valuation $v:Q(X)^{\ast}\rightarrow \mathbb{Z}^n$ (i.e. $v(Q(X)^{\ast})=\mathbb{Z}^n$) with respect to the total ordering of $\mathbb{Z}^n$ (e.g. for $p=(p_1,...,p_n),q=(q_1,...,q_n)\in \mathbb{Z}^n$, say $p>q$ iff for some $1\leq r <n$ we have $p_{i}=q_i$ for $i=1,...,r$ and $p_{r+1}>q_{r+1}$). Then $v$ naturally induces a valuation $v_t : Q(X)[t]^{\ast}\rightarrow \mathbb{Z}^{n+1}$ extending $v$ (see the details of the construction in section 2.4 in \cite{kk12}). The valuation $v_t$ maps non-zero elements of $S[t]:=\bigoplus_{k\in\mathbb{N}} S_k t^k$ to a semigroup, denoted by $P(S)$, of integral points contained in $\mathbb{Z}^n \times \mathbb{Z}_{\geq 0}$. Fix the rational half-space $M=\mathbb{R}^n\times\mathbb{R}_{\geq 0}$ and take $\pi_M:\mathbb{R}^{n+1}\rightarrow \mathbb{R}$ to be the projection map to the last coordinate. It can be shown that the cone associated to $P(S)$ is strongly convex since $S$ is of almost integral type (cf. Theorem 2.30 in \cite{kk12}). Then one can define the convex body $\Delta(P(S))=\tilde{P}(S) \bigcap (\mathbb{Z}^n\times\{1\})$ to be the \emph{Newton-Okounkov} body, denoted by $\Delta(S)$, of the algebra $S$. Let us fix the notations $m(S),\text{ind}(S)$ standing for the indices $m(P(S))$, $\text{ind}(P(S))$ for the semigroup $P(S)$ respectively.

We will adopt the following two important properties of Newton-Okounkov bodies in our context.
\begin{lemma}[Theorem 2.31 in \cite{kk12}]\label{resp}
Let $S=\bigoplus_{k\in\mathbb{N}}S_k$ be an algebra of almost integral type with the Newton-Okounkov body $\Delta(S)$. Put $m=m(S)$, $q=\dim \Delta(S)$ and the Hilbert function $H_S (k):=\dim S_k$. Then the $q$-th growth coefficient of the function $H_S (m\cdot)$
$$    a_q:=\lim_{k\rightarrow\infty}\frac{H_S (mk)}{k^q}    $$
exists and $a_q = \emph{Vol}_q (\Delta(S))/\emph{ind} (S)>0$.
\end{lemma}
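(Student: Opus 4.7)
The plan is to prove the lemma by realizing $H_S(k)$ as a count of lattice points in a slice of the Newton–Okounkov semigroup $P(S)$, and then comparing this count to the volume of the convex body $\Delta(S)$ via a standard Ehrhart-type estimate. The central idea is that the valuation $v_t$ assembles $S$ into a combinatorial object (the semigroup) for which asymptotic volume estimates are available from elementary convex geometry; the analytic content has been absorbed into the assumption that $S$ is of almost integral type.

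First, I would exploit the faithfulness of the valuation. Since $v:Q(X)^{\ast}\to\mathbb{Z}^n$ is faithful, two non-zero elements of $S_k$ with distinct $v_t$-values are automatically $\mathbb{C}$-linearly independent (a standard filtration argument on leading terms). Choosing representatives of each value one obtains a basis of $S_k$, so
\[
H_S(k)=\dim_{\mathbb{C}} S_k=\#\bigl(P(S)\cap\pi_M^{-1}(k)\bigr).
\]
In particular $H_S(k)=0$ unless $k\in\pi_M(G)=m\mathbb{Z}$, which already forces the sampling $H_S(mk)$ in the statement.

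Next, I would replace $P(S)$ by its regularization $\tilde{P}(S)=G\cap C$. The almost integral type hypothesis is designed precisely to guarantee that $P(S)$ and $\tilde{P}(S)$ agree up to a set whose slice counts are $o(k^q)$; this is the content of Khovanskii's theorem on integral semigroups (Theorem 1.6 and Corollary 1.9 in \cite{kk12}). Granting that, it suffices to estimate $\#\bigl(\tilde{P}(S)\cap\pi_M^{-1}(mk)\bigr)$ for large $k$. Because $\tilde{P}(S)\cap\pi_M^{-1}(mk)$ is a homothetic copy (by factor $k$) of $\tilde{P}(S)\cap\pi_M^{-1}(m)=\Delta(S)$, translated into the affine sublattice $G\cap\pi_M^{-1}(mk)$ whose covolume in $\pi_M^{-1}(m)\cong\mathbb{R}^{q}$ is $\mathrm{ind}(S)$, an Ehrhart/Minkowski volume-counting argument gives
\[
\#\bigl(\tilde{P}(S)\cap\pi_M^{-1}(mk)\bigr)=\frac{\mathrm{Vol}_q(\Delta(S))}{\mathrm{ind}(S)}\,k^q+O(k^{q-1}).
\]
Combining this with the previous step yields $a_q=\mathrm{Vol}_q(\Delta(S))/\mathrm{ind}(S)$, and positivity follows since $\dim\Delta(S)=q$ means $\mathrm{Vol}_q(\Delta(S))>0$.

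The main obstacle is the asymptotic comparison between $P(S)$ and $\tilde{P}(S)$: a priori $P(S)$ could fail to contain many lattice points of $G\cap C$ at low heights, and one needs to know that these defects become negligible at scale $k^q$. This is exactly where the almost integral type hypothesis enters, and it is ultimately deduced from Khovanskii's result that a finitely generated-in-the-cone semigroup differs from its regularization by a bounded-height tail plus a sub-cone of strictly lower dimensional slice growth. The remaining ingredients—identification of $\dim S_k$ with a lattice point count and the Ehrhart asymptotic on rational polytopes—are routine once this comparison is in place.
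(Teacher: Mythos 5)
This lemma is not proved in the paper at all; it is quoted verbatim as Theorem~2.31 of Kaveh--Khovanskii \cite{kk12} and used as a black box. So there is no in-paper proof to compare against; the question is whether your reconstruction of the \cite{kk12} argument is sound. It is, and it follows the same three-stage skeleton as the original: (i) use the one-dimensional-leaves property of a faithful valuation to identify $H_S(k)$ with the cardinality of $P(S)\cap\pi_M^{-1}(k)$; (ii) invoke the approximation theorem for strongly non-negative semigroups (Theorem~1.6/Corollary~1.9 in \cite{kk12}), whose applicability is exactly what the almost-integral-type hypothesis guarantees, to pass from $P(S)$ to the regularization $\tilde P(S)$ with an error that is negligible at scale $k^q$; (iii) count lattice points of the affine sublattice $G\cap\pi_M^{-1}(mk)$ in the $k$-fold dilation of $\Delta(S)$, yielding the leading coefficient $\mathrm{Vol}_q(\Delta(S))/\mathrm{ind}(S)$.

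Two small points of precision. First, for a graded algebra of almost integral type the cone $C$ need not be polyhedral and $\Delta(S)$ need not be a rational polytope, so ``Ehrhart'' is slightly the wrong word and the error term $O(k^{q-1})$ is not available in general; what the classical lattice-point counting argument gives for an arbitrary bounded convex body is $o(k^q)$, which suffices for the existence of the limit $a_q$. Second, your phrase ``$\tilde P(S)\cap\pi_M^{-1}(mk)$ is a homothetic copy of $\Delta(S)$'' conflates the discrete lattice slice with the convex body $C\cap\pi_M^{-1}(mk)$; the correct statement is that the latter is the $k$-fold dilation of $C\cap\pi_M^{-1}(m)$, and the former is its intersection with the shifted lattice $G\cap\pi_M^{-1}(mk)$ of covolume $\mathrm{ind}(S)$. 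Neither of these affects the validity of the argument, but you would want to tighten the wording before presenting this as a self-contained proof.
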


\begin{lemma}[a part of Theorem 3.3 in \cite{kk12}]\label{lema41kk12}
Let $S$ be an algebra of almost integral type in $Q(X)$ and $S_k$ be the $k$-th subspace of the algebra $S$. Let $Y_k:=\emph{Im} \Phi_k$, where $\Phi_k$ is the Kodaira map defined in \emph{(\ref{expl})} associated to the linear subspace $S_k$. If $p$ is sufficiently large and divisible by $m(S)$, then $\emph{dim} Y_p$ is independent of $p$ and equals the dimension of the Newton-Okounkov body $\Delta(S)$.
\end{lemma}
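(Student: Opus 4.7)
The plan is to prove the two assertions in tandem: first, establish that $\dim Y_p$ stabilizes to $\tmop{tr} . \deg_{\mathbb{C}} Q((S))$ for $p$ large and divisible by $m(S)$; second, show this transcendence degree equals $q := \dim \Delta(S)$ via two opposite inequalities, the upper bound via a Hilbert-function comparison and the lower bound via the valuation $v$.

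For the stabilization, I would observe that each $Q(Y_p)$ is the subfield of $Q(X)$ generated by the ratios $\{s_1/s_2 : s_1, s_2 \in S_p,\ s_2 \neq 0\}$. As $p$ ranges over the positive multiples of $m(S)$, these subfields form an increasing chain whose union is the degree-zero quotient field $Q((S))$. Since $Q(X)$ is finitely generated over $\mathbb{C}$ (Theorem 3.1 in \cite{ueno00}), every subfield is finitely generated, so the chain must stabilize: $Q(Y_p) = Q((S))$ for all sufficiently large $p$ with $m(S) \mid p$. In particular, $\dim Y_p = \tmop{tr} . \deg_{\mathbb{C}} Q(Y_p) = \tmop{tr} . \deg_{\mathbb{C}} Q((S)) =: d$ is then a well-defined invariant, exactly in the spirit of Proposition \ref{pdju}.

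For the upper bound $d \leq q$, I would fix such a $p$ and consider the graded subalgebra $\mathbb{C}[S_p] \subset S$ generated in degree $1$ by $S_p$. Because $\tmop{Proj}\mathbb{C}[S_p]$ is birational to $Y_p$ and $\mathbb{C}[S_p]$ is finitely generated in degree one, Propositions \ref{begin}--\ref{en} yield $\dim_{\mathbb{C}}(\mathbb{C}[S_p])_k \sim C k^d$ as $k \to \infty$. The natural injection $(\mathbb{C}[S_p])_k \hookrightarrow S_{kp}$ then forces $\dim S_{kp} \gtrsim k^d$. On the other hand, Lemma \ref{resp} gives $\dim S_{kp} \sim C' k^q$ as $k \to \infty$ (the divisibility $m(S) \mid p$ places us on the correct arithmetic progression), so $d \leq q$ follows.

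For the reverse inequality $d \geq q$, I would invoke the faithful valuation $v : Q(X)^* \to \mathbb{Z}^n$ underlying the construction of $\Delta(S)$. By the definition of $P(S)$ and the group $G$ it generates, the subgroup $G \cap \partial M$, identified with a sublattice of $\mathbb{Z}^n$ via the projection $\partial M \cong \mathbb{R}^n \times \{0\}$, has rank exactly $q$, and is generated by the valuation-differences $v(s_1) - v(s_2)$ with $s_1, s_2 \in S_k$ for varying $k$; all such differences lie in $v(Q((S))^*)$. Consequently $v(Q((S))^*) \subset \mathbb{Z}^n$ has rank at least $q$, and Abhyankar's inequality applied to $v|_{Q((S))^*}$ (whose residue field extension over $\mathbb{C}$ is trivial) gives $d \geq q$. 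The main obstacle is establishing in full rigor this identification between valuation-differences and the combinatorial data of $G \cap \partial M$, especially because $P(S)$ need not be finitely generated; here the regularization $\widetilde{P}(S)$ and the almost-integral-type hypothesis on $S$ enter in an essential way.
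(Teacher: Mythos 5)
The paper offers no proof of this lemma: it is cited directly as (part of) Theorem 3.3 of \cite{kk12}, so there is no in-text argument to compare against. Your reconstruction is essentially correct. The stabilization step mirrors Proposition \ref{pdju}; the one point to be careful about is that the inclusion $Q(Y_{p_1}) \subset Q(Y_{p_2})$ uses $s/t = (su)/(tu)$ with $u \in S_{p_2-p_1}\setminus\{0\}$ and hence requires $S_{p_2-p_1} \neq 0$, which holds once both indices are large enough multiples of $m(S)$ because $N(S)_{\geq n_0}$ eventually coincides with the full arithmetic progression $(m(S)\mathbb{N})_{\geq n_0}$, so one can pass to a cofinal increasing chain. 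The upper bound $d \leq q$ via the injection $(\mathbb{C}[S_p])_k \hookrightarrow S_{kp}$ together with Propositions \ref{begin}--\ref{en} and Lemma \ref{resp} is sound as written.

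The gap you flag in the lower bound is not a real one, and is cheaper to close than you suggest. Any $\gamma \in G \cap \partial M$ is a $\mathbb{Z}$-combination $\sum_i a_i \,(v(s_i), k_i)$ with $s_i \in S_{k_i}\setminus\{0\}$ and $\sum_i a_i k_i = 0$. Splitting into positive and negative coefficients and setting $K := \sum_{a_i>0} a_i k_i = \sum_{a_i<0} (-a_i) k_i$ gives
\begin{equation*}
\gamma = \bigl(\, v\bigl(\textstyle\prod_{a_i>0} s_i^{a_i}\bigr) - v\bigl(\textstyle\prod_{a_i<0} s_i^{-a_i}\bigr),\, 0 \,\bigr),
\end{equation*}
with both products lying in the single space $S_K$ --- so $\gamma$ is a single valuation-difference in a single degree. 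No finite generation of $P(S)$ is needed for this; the almost-integral-type hypothesis is what makes $\Delta(S)$ a compact body and what lets Lemma \ref{resp} apply. With this identification, $\operatorname{rank} v(Q((S))^{\ast}) \geq q$ and the Abhyankar--Zariski inequality gives $d \geq q$. Your Abhyankar route for the lower bound is a clean alternative to the argument in \cite{kk12}, which instead compares the regularized semigroup of $S_0[S_p]$ to that of $S$ and invokes their growth theorem for integral-point semigroups; both are valid, and yours has the advantage of isolating the purely valuation-theoretic content of the inequality.
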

Let $X$ now be a (possibly non-algebraic) compact complex manifold. The algebraic dimension $a(X)$ of $X$ is defined to be the transcendental degree of $Q(X)$, namely the field of meromorphic functions, over $\mathbb{C}$.

\begin{definition}[Algebraic reduction, cf. p. 25 in \cite{ueno00}]\label{jlp}
A surjective morphism $a:X'\rightarrow A$ is called an \text{algebraic reduction} if it satisfies the following conditions:
\begin{itemize}
\item[1)] $X'$ is also a complex manifold and bimeromorphically equivalent to $X$.
\item[2)] $A$ is a projective manifold and $\dim A=a(X)$.
\item[3)] $a$ induces an isomorphism between $Q(X)$ and $Q(A)$.
\end{itemize}
\end{definition}

An algebraic reduction always exists (cf. p. 24-25 in \cite{ueno00}), and is unique up to a bimeromorphic equivalence. In addition, by the very basic fact Corollary 1.10 in \cite{ueno00}, the fibers of $a:X'\rightarrow A$ are connected (i.e. $a:X'\rightarrow A$ is an analytic fiber space) from the above condition 3).

Let $F$ be a holomorphic line bundle over $X$.
The following result will be a useful tool for the study of the asymptotic behavior of a graded linear system associated to $X$ and $F$. For completeness, we will also give a sketch of proof of the following lemma, whose original idea comes from \cite{wxj22}.
\begin{lemma}[Theorem 1 in \cite{wxj22}]\label{wxj22}
Let $a_0 \in\mathbb{N}$ such that $a_{0}F$ is effective. Then there exists a smooth projective variety $A$ (independent of $a_0$) and an algebraic reduction of $X$ such that there exists a $\mathbb{Q}$-effective divisor $D$ over $A$ and
\begin{equation}
H^0(A,kD)=H^0(X,k a_0 F)
\end{equation}
holds  for $k>0$ sufficiently divisible.
\end{lemma}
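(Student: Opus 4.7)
The plan is to leverage the universal property of the algebraic reduction to transfer the computation of $H^0(X, k a_0 F)$ from the possibly non-algebraic manifold $X$ to the smooth projective variety $A$. First I would fix, once and for all independently of $a_0$, a bimeromorphic model $\mu : X' \rightarrow X$ equipped with an algebraic reduction $a : X' \rightarrow A$, with both $X'$ and $A$ smooth; such data exists by the discussion following Definition \ref{jlp}, after resolving indeterminacies and desingularizing if needed. The crucial property to be exploited is that $a^{*} : Q(A) \xrightarrow{\sim} Q(X')$ is an isomorphism of fields.

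Given $a_0 \in \mathbb{N}$ with $H^0(X, a_0 F) \neq 0$, I would select a non-zero section $s_0 \in H^0(X, a_0 F)$, set $\Sigma := \operatorname{div}_X(s_0) \geq 0$, and $\Sigma' := \mu^{*} \Sigma$. Since $\mu$ is bimeromorphic between smooth manifolds, pullback yields $H^0(X, k a_0 F) \cong H^0(X', k a_0 \mu^{*} F)$ for every $k$. Choosing the reference section $s_0^k$ then identifies this latter space with
\[ V_k := \{ f \in Q(X')^{*} : \operatorname{div}_{X'}(f) + k \Sigma' \geq 0 \} \cup \{0\} \]
via $s \mapsto s / (\mu^{*} s_0)^k$. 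The remaining task is to rewrite $V_k$ as a space of rational functions on $A$ with controlled poles.

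Writing $f = a^{*} g$ for a unique $g \in Q(A)^{*}$, I would analyze the membership condition $f \in V_k$ prime divisor by prime divisor of $X'$. If $E \subset X'$ is horizontal (i.e.\ $a(E)$ is dense in $A$), then a generic point of $E$ maps to a generic point of $A$, where $g$ takes a finite non-zero value, so $a^{*} g$ is a unit in the DVR $\mathcal{O}_{X',E}$ and $\operatorname{ord}_E(a^{*} g) = 0$; the constraint $k \operatorname{ord}_E(\Sigma') \geq 0$ then holds trivially by effectivity of $\Sigma'$. If $E$ is vertical with $a(E) = B$ a prime divisor of $A$, the induced map of local DVRs has some ramification index $\nu_E$, giving $\operatorname{ord}_E(a^{*} g) = \nu_E \operatorname{ord}_B(g)$. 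Taking the strongest constraint over all vertical $E$ above each $B$ produces $\operatorname{ord}_B(g) \geq -k \cdot c_B$ with
\[ c_B := \max_{E \text{ vertical}, \, a(E) = B} \frac{\operatorname{ord}_E(\Sigma')}{\nu_E} \in \mathbb{Q}_{\geq 0}. \]
Since $\Sigma'$ has only finitely many components, only finitely many $c_B$ are non-zero, so the $\mathbb{Q}$-effective divisor $D := \sum_B c_B \cdot B$ on $A$ is well-defined and independent of $k$.

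Combining everything, the condition $f \in V_k$ translates precisely into $\operatorname{div}_A(g) + k D \geq 0$, so for any $k$ divisible enough that $k D$ is an integral divisor, the correspondence $g \leftrightarrow a^{*} g \cdot (\mu^{*} s_0)^k$ yields an isomorphism $H^0(A, \mathcal{O}_A(k D)) \xrightarrow{\sim} H^0(X, k a_0 F)$, which is exactly the desired statement. I expect the most delicate part of the argument to be the horizontal case, where one must verify carefully (in the analytic setting, using the scheme-theoretic picture of the map $\operatorname{Spec} \mathcal{O}_{X',E} \to A$) that every element of $Q(A)^{*}$ pulls back to a unit in $\mathcal{O}_{X',E}$; modulo that point, the remainder is a routine bookkeeping of valuations and ramification indices of the analytic fibration $a$.
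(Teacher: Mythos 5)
Your overall strategy — fixing a reference section $s_0$, identifying $H^0(X, k a_0 F)$ with a space of rational functions via $s \mapsto s/(\mu^* s_0)^k$, and then translating the pole bounds on $X'$ into pole bounds on $A$ valuation by valuation — is the same spirit as the paper's proof of Lemma~\ref{wxj22}, which follows \cite{wxj22}. There are, however, two genuine problems.

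First, a sign/extremum slip: when you collect the constraints from all vertical divisors $E$ lying over a fixed prime divisor $B\subset A$, the constraint $\nu_E\operatorname{ord}_B(g) + k\operatorname{ord}_E(\Sigma')\geq 0$ yields $\operatorname{ord}_B(g)\geq -k\cdot\operatorname{ord}_E(\Sigma')/\nu_E$. The \emph{strongest} of these constraints is the one with the \emph{smallest} ratio $\operatorname{ord}_E(\Sigma')/\nu_E$, so you must set $c_B:=\min_E\operatorname{ord}_E(\Sigma')/\nu_E$, not $\max_E$. With the $\max$, the map $g\mapsto a^*g\cdot(\mu^*s_0)^k$ from $H^0(A, kD)$ back into $H^0(X', k a_0\mu^*F)$ fails to be well-defined. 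This is precisely how the paper's coefficients $g_l = \min_j b_j/c_{j,l}$ arise.

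Second, and more seriously, you only treat divisors $E\subset X'$ that are horizontal or map to a prime divisor of $A$. You omit the $a$-\emph{exceptional} divisors, i.e.\ those with $\operatorname{codim}_A a(E)\geq 2$. These exist in general (resolving indeterminacies of $X\dashrightarrow A$ typically creates them), and along such $E$ the rational function $a^*g$ can acquire poles of unbounded order even when $\operatorname{div}_A(g)+kD\geq 0$, because the divisor $D$ on $A$ cannot ``see'' codimension $\geq 2$ loci. The effectivity of $\Sigma' = \mu^*\Sigma$ gives no compensation there. Hence the purported bijection $H^0(A,kD)\xrightarrow{\sim}H^0(X,ka_0 F)$ is not established. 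This is precisely the point where the paper's proof (following Wu) must work harder: one first chooses a \emph{neat model} in which every $a$-exceptional divisor is also $\mu$-exceptional, so that one can throw in a $\mu$-exceptional $\mathbb{Q}$-divisor $T'$ and invoke the fact that adding $\mu$-exceptional divisors does not change $H^0$ (Lemma~3 of \cite{wxj22}), together with the auxiliary decomposition involving non-polar and fibre-supported divisors (Lemmas~1 and~4 of \cite{wxj22}). Without both the neat-model choice and these exceptional-divisor lemmas, the $a$-exceptional contributions cannot be controlled, so the gap is real. Note also that your expectation that the horizontal case is the delicate one is inverted: that case is straightforward (any $g\in Q(A)^*$ is a unit at the generic point of a horizontal divisor); the delicacy is entirely in the $a$-exceptional case you did not address.
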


\begin{proof}
With the existence of the algebraic reduction of $X$ (cf. Definition \ref{jlp}) and the neat model of any holomorphic fibrations between compact complex manifolds (cf. Lemma 1.3 in \cite{campana} or Lemma 2 in \cite{wxj22}), we may assume there exists a proper modification $\mu: X'\rightarrow X$ and an algebraic reduction $a: X' \rightarrow A$, so that every $a$-exceptional divisor is also $\mu$-exceptional. We decompose the divisor $\mu^{\ast}(a_0 F)$ by (the dimension of the image of each irreducible component of $\mu^{\ast}F$ under the algebraic reduction $a$)
\begin{equation}\label{sierwu}
\mu^{\ast}(a_0 F)= N'+\sum_{j=1}^{J} b_j D'_j +R',
\end{equation}
where $N'$ is an effective divisor so that $a(N'_0)=A$ for each irreducible component $N'_0$ of $N'$ (thus $N'$ is non-polar, in the sense of Definition 1.2 in \cite{campana}), each $D'_j$ is a prime divisor whose image under $a$ is of codimension one (hence also a prime divisor in $A$, according to section 9.1.3 in \cite{grauert-remmert}) with coefficients $b_j\in\mathbb{Z}_{>0}$ and $R'$ stands for an $a$-exceptional divisor.

Let us denote $D_j:=a(D'_j)$ for $j=1,...,J$ and assume all $D_{l}$ ($l=1,...,L$) are not equal to each other with $L\leq J$, while $D_{j_1}$ ($j_1 >L$) equals one of the $D_{j_2}$ ($j_2 =1,...,L$) (since it might happen that $D_p = D_q$ for $p\neq q \in \{1,...,J\}$). Then we consider the following decomposition
\begin{equation}\label{sierliu}
a^{\ast}D_l=\sum_{j=1}^{J} c_{j,l} D'_j + \sum_{m=1}^{M_l} d_{m,l} E'_l +R'_l, l=1,...,L
\end{equation}
with prime divisors $E'_{l}$ for $l=1,...,L$ which do not contain any $D'_j$ for $j=1,...,J$, $a$-exceptional divisors $R'_l$ for $l=1,...,L$ and all integers $c_{j,l}\geq 0, d_{m,l} > 0$ and $c_{j,l}>0$ for at least one $j\in \{1,...,J\}$. For $j\in \{1,...,J\}$, we set $f_{j,l}=b_j$ if $c_{j,l}>0$ and $f_{j,l}=0$ if $c_{j,l}=0$. Then
\begin{equation}\label{sierqi}
G'_l:=\sum_{j=1}^{J} f_{j,l} D'_j >0
\end{equation}
will be the maximal divisor whose support is contained in all $D'_j$ ($j=1,...,J$) and $a^{\ast} D_l$, so that  $\mu^{\ast}(a_0 F)-G'_l$ is still effective.

Consider the $\mathbb{Q}$-effective divisor
$$ D:=\sum_{l=1}^{L} g_l D_l $$
with $ g_{l}=\min_{j=1,...,J}\frac{b_{j}}{c_{j,l}}\in\mathbb{Q}$ if $M_l =0$; $g_l =0$ if $M_l\neq 0$ ($l=1,...,L$). If $g_{l}=0$, then it can be shown that $G'_l$ will be partially supported on the fibers of $a$ (see its definition in Definition 1.21 in \cite{campana}). In this case, we set $T'_l:=0$. If $g_{l}>0$, then all  $f_{j,l}$ are not zero. Set $T'_l:=g_l R'_l$ then $G'_l -g_l \mu^{\ast}(D_l)+T'_{l}$ will also be a $\mathbb{Q}$-effective divisor partially supported on the fibers of $a$, thanks to (\ref{sierliu}) and (\ref{sierqi}). Denote by $T':=\sum_{l=1}^{L}T'_l$ the $a$-exceptional (hence also $\mu$-exceptional) $\mathbb{Q}$-effective divisor. Summing up with the index $l=1,...,L$, thus we obtain $P':=\mu^{\ast}(a_0 F)-N'-a^{\ast}D+T'$ will be a $\mathbb{Q}$-effective divisor partially supported on the fibers of $a$ according to (\ref{sierwu}).

Choosing $k\in\mathbb{N}$ sufficiently divisible, we will see that
$$
H^0(X,ka_0 F)=H^0(X',k\mu^{\ast}(a_0 F)+k T')=H^0(X',kP'+kN'+ka^{\ast}D)=H^0(A,kD)
$$
holds, by the basic property of exceptional divisors (cf. Lemma 3 in \cite{wxj22}),  non-polar divisors (cf. Lemma 4 in \cite{wxj22}) and partially supported divisors (cf. Lemma 1 in \cite{wxj22} or Lemma 1.22 in \cite{campana}).
\end{proof}

We are now in position to show that the aforementioned three numbers defined from (\ref{dko}) to (\ref{pkj}) all coincide, which will all be denoted by $\kappa(X,K_X +L, h_L)$ from now on.

\begin{proposition}
  \label{eqji}$\kappa^{(1)} (X, K_X + L, h_L) = \kappa^{(2)} (X, K_X + L,
  h_L)= \kappa^{(3)} (X, K_X + L, h_L)$.
\end{proposition}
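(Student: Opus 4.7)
The plan is to dispose of $\kappa^{(1)}=\kappa^{(2)}$ by a direct transcendence-degree argument based on Proposition \ref{pdju}, and then to establish $\kappa^{(2)}=\kappa^{(3)}$ by realizing the graded algebra $S=\bigoplus_{k}S_{k}$ as an algebra of \emph{almost integral type} inside the function field of a \emph{projective} manifold, so that the Newton--Okounkov body machinery encoded in Lemma \ref{resp} and Lemma \ref{lema41kk12} applies uniformly to both $\kappa^{(2)}$ and $\kappa^{(3)}$.

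For the first equality, I would simply observe that Proposition \ref{pdju} yields $Q(\tmop{Im}\Phi_{|S_{k}|})=Q((X,K_{X}+L,h_{L}))$ for every $k\in N(X,K_{X}+L,h_{L})$ sufficiently large. By Remmert's proper mapping theorem, $\tmop{Im}\Phi_{|S_{k}|}$ is a projective variety, so its dimension coincides with the transcendence degree of its function field. Taking the maximum over $k$ therefore gives $\kappa^{(2)}=\tmop{tr.deg}_{\mathbb{C}}Q((X,K_{X}+L,h_{L}))=\kappa^{(1)}$.

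For $\kappa^{(2)}=\kappa^{(3)}$, the strategy has three stages. First, I would reduce to the projective setting: iterating Proposition \ref{oo} along a suitable tower of smooth blow-ups, one replaces $X$ by a smooth bimeromorphic model admitting an algebraic reduction $a:X\to A$ to a projective manifold $A$ with $Q(X)\cong Q(A)$ (cf. Definition \ref{jlp}). Second, applying Lemma \ref{wxj22} to $F=K_{X}+L$, there exists a $\mathbb{Q}$-effective divisor $D$ on $A$ such that $H^{0}(X,k(K_{X}+L))=H^{0}(A,kD)$ for every $k$ sufficiently divisible. Fixing a nonzero reference element $s_{0}\in S_{m}$ for some $m\in N(X,K_{X}+L,h_{L})$ (assumed nonempty, else the statement is vacuous), the truncated algebra $S^{(m)}$ embeds into $Q(A)$ as a graded $\mathbb{C}$-subalgebra of $R(A,D)$. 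Since any graded subalgebra of $R(A,D)$ for a $\mathbb{Q}$-effective divisor $D$ on a projective manifold is of almost integral type (Theorem 3.7 in \cite{kk12}, as recalled above), $S$ is of almost integral type.

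Third, once $S$ is of almost integral type, Lemma \ref{resp} identifies $\kappa^{(3)}(X,K_{X}+L,h_{L})$ with $\dim\Delta(S)$ via the asymptotic $\dim S_{m(S)k}\sim a_{q}k^{q}$ with $a_{q}>0$ and $q=\dim\Delta(S)$, while Lemma \ref{lema41kk12} identifies $\kappa^{(2)}(X,K_{X}+L,h_{L})$ with $\dim\Delta(S)$ via the stabilization of $\dim\tmop{Im}\Phi_{|S_{p}|}$ for $p$ sufficiently large and divisible by $m(S)$. Combining these yields $\kappa^{(2)}=\kappa^{(3)}=\dim\Delta(S)$. The main technical obstacle is the first stage: the transfer from the possibly non-algebraic $X$ to its projective algebraic reduction $A$ has to be performed at the level of the subalgebra $S$ (rather than only of the full section ring $R(X,K_{X}+L)$), which means tracking the identification of Lemma \ref{wxj22} through the chosen reference section $s_{0}$ and verifying that the image of $S^{(m)}$ inside $R(A,D)$ preserves both the growth rate of $\dim S_{k}$ and the dimensions of the Kodaira images $\tmop{Im}\Phi_{|S_{k}|}$. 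Once these compatibilities are in hand, the almost-integrality property is automatic from the projectivity of $A$, and Kaveh--Khovanskii theory finishes the job.
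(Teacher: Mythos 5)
Your proposal is correct and follows essentially the same route as the paper: dispose of $\kappa^{(1)}=\kappa^{(2)}$ via Proposition \ref{pdju}, transfer the graded subalgebra $S$ to the function field of a projective manifold $A$ by combining Proposition \ref{oo} (blow-up invariance) with Lemma \ref{wxj22} (algebraic reduction), deduce almost-integral type from Theorem 3.7 of Kaveh--Khovanskii, and then apply Lemma \ref{resp} and Lemma \ref{lema41kk12} to identify both $\kappa^{(2)}$ and $\kappa^{(3)}$ with $\dim\Delta(S)$. The only cosmetic difference is that you propose embedding $S^{(m)}$ via a fixed reference section $s_{0}$, whereas the paper uses the isomorphisms $\theta_k: S^{(a a_0)}_k \xrightarrow{\sim} V_k \subset H^0(A,kaD)$ furnished directly by Lemma \ref{wxj22}; both routes land in the same place, and the paper additionally spells out the truncation step $\kappa^{(j)} = \kappa^{(j)}_a$ which your argument handles implicitly.
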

  \begin{proof}
    The first equality follows easily from the fact that
    \begin{equation}
      Q ((X, K_X + L, h_L)) = Q (\tmop{Im} \Phi_{|S_k |})
    \end{equation}
    for all $k \in N (X, K_X + L, h_L)$ sufficiently large.

        Let us then show that
        \begin{equation}\label{harv}
         \kappa^{(2)} = \kappa_a^{(2)}, \kappa^{(3)} =
    \kappa_a^{(3)}
    \end{equation}
     for any integers $a \in \mathbb{N}$, where
    \begin{equation}
      \kappa_a^{(2)} \assign \max_{k \in \mathbb{N}} \dim \tmop{Im}
      \Phi_{|S_{a k} |}
    \end{equation}
    and
    \begin{equation}
      \kappa_a^{(3)} \assign \max \{m \in \mathbb{N} ; \limsup_{k \rightarrow
      \infty} \frac{h^0 (X, a k (K_X + L) \otimes \mathcal{I}_{a k}
      (h_L))}{k^m} > 0\} .
    \end{equation}
    For $\kappa^{(2)}$, one just need to use the fact $Q ((S^{(a)})) = Q
    ((S))$ and the first equality $\kappa^{(2)} (X, K_X + L, h_L) =
    \kappa^{(1)} (X, K_X + L, h_L) = \tmop{tr} . \deg_{\mathbb{C}} Q ((S)) .$ For $\kappa^{(3)}$, by definition, it is clear that
    $\kappa^{(3)} \geq \kappa^{(3)}_a$. On the
    other side, if there exists $\{k_{\nu} \}_{\nu = 1}^{\infty}$ such that
    \begin{equation}
      N_{\nu} \assign h^0 (X, k_{\nu} (K_X + L) \otimes \mathcal{I}_{k_{\nu}}
      (h_L)) \geq C k^{\kappa^{(3)}}_{\nu}
    \end{equation}
    for some $C > 0$, then $h^0 (X, a k_{\nu} (K_X + L) \otimes \mathcal{I}_{a
    k_{\nu}} (h_L)) \geq N_{\nu} \geq C k^{\kappa^{(3)}}_{\nu}$ and
    $\kappa^{(3)} \leq \kappa^{(3)}_a$. Hence $\kappa^{(3)} = \kappa^{(3)}_a$.

    Applying Lemma \ref{wxj22} with $F=K_X +L$, there exist a smooth projective variety $A$ and a $\mathbb{Q}$-effective divisor $D$ on $A$ so that $H^0(A,kD)=H^0(X,k a_0 (K_X+L))=H^0(X',k a_0 (K_{X'}+L'))$ holds for some integer $a_0>0$, any $k\in\mathbb {N}$ sufficiently divisible. Furthermore, the following diagram
      \begin{equation}
    \text{} \begin{array}{ccc}
      X' &
      \stackrel{\mu}\longrightarrow & X\\
      \downarrow  &  \\
      A    \end{array}
  \end{equation}
     holds, where $\mu:X' \rightarrow X$ is a proper modification and $X'\rightarrow A$ is a surjective holomorphic map with connected fibers. We may assume $H^0(A,kaD)=H^0(X,k a a_0(K_X+L))$ holds for some other integer $a$ and any integers $k$. 

     Therefore, $S^{(a a_0)}\subset W:=\bigoplus_{k\in\mathbb{N}} H^0(A,kaD)$ will be an algebra of almost integral type according to Theorem 3.7 in \cite{kk12}; thus we can consider $\Delta(S^{(a a_0)})$, which is defined to be the Newton-Okounkov body of $S^{(a a_0)}$. We highlight that $\Delta(S^{(a a_0)})$ might depend on the choice of the algebraic reduction and $A$ as pointed out in Remark 1 in \cite{wxj22} (however, its dimension does not rely on the choice of $A$).

     Identifying $S'_{k a a_0}:=H^0(X',ka a_0 (K_{X'}+L')\otimes \mathcal{I}_{k a a_0}(h_{L'}))$ as a linear subspace $V_k$ of $H^0(A,k aD)$ via an isomorphism $\theta_k$ for any $k\in\mathbb{N}$ (note $S'_{k a a_0}=S^{(a a_0)}_{k}$ by Proposition \ref{oo}), then $V_k V_l\subset V_{k+l}$ and we have the following diagram 
           \begin{equation}
    \text{} \begin{array}{ccc}
      X' & \stackrel{\Phi_{|S' _{ka a_0}|}}\dashrightarrow & \text{Im} \Phi _{|S' _{ka a_0}|}\subset \mathbb{P}^N\\
      \downarrow  &  & \theta_k^{\ast} \downarrow \\
      A  &
      \stackrel{\Phi_{|V _k|}}\dashrightarrow & \text{Im} \Phi _{|V _k|}\subset \mathbb{P}^N  \end{array}
  \end{equation}
  with $\theta_k^{\ast}$ an isomorphism between $\text{Im} \Phi _{|S' _{ka a_0}|}$ and $\text{Im} \Phi _{|V _k|}$ (indeed, $\theta_k^{\ast}\in\text{Aut}(\mathbb{P}^N)$) induced by $\theta_k$. Hence $\kappa^{(2)}(X,K_X +L, h_L)=\max_{k\in\mathbb{N}} \dim \text{Im} \Phi _{|V _k|}$.
     Now applying Lemma \ref{lema41kk12} and Lemma \ref{resp} respectively to the graded algebra $S^{(a a_0)}\simeq\bigoplus_{k\in\mathbb{N}} V_k$ of almost integral type, we then obtain that both $\kappa^{(2)}_{a a_0}$ and $\kappa^{(3)}_{a a_0}$ are equal to $\dim \Delta(S^{(a a_0)})$. Our claim follows in the end as $\kappa^{(2)}_{a a_0}=\kappa^{(2)}$ and $\kappa^{(3)}_{a a_0}=\kappa^{(3)}$, thanks to (\ref{harv}).
      \end{proof}

In the special case when $S$ is finitely generated, $$\kappa^{(2)}(X,K_X +L,h_L)=\kappa^{(3)}(X,K_X +L,h_L)$$ can be obtained immediately as mentioned in the previous subsection. However, it is still in question that whether $S$ defined as in (\ref{notj}) is finitely
generated or not. In {\cite{bojiezhou}}, the authors has reduced this problem to the study of the singularities of admissible Bergman metrics.

A dominating meromorphic map $f : X \dashrightarrow Y$ between a compact
complex manifold $X$ and a projective algebraic variety $Y$ is
determined by the subfield $K = Q (Y) \subset Q (X)$ up to bimeromorphic
equivalence. For such $f$, there exist two proper modifications $g' : X'
\rightarrow X$ and $h' : Y' \rightarrow Y$ such that $Y'$ is a projective
manifold and the induced map $f' : X' \dashrightarrow Y'$ is actually a
morphism. This is obtained by choosing $X'$ as a non-singular model (whose
existence is due to Hironaka, cf. Theorem 2.12 in {\cite{ueno00}}) of the
graph of the meromorphic mapping $X \dashrightarrow Y'$ and $g'$ as the
projection onto the factor $X$. In this way, $f'$ is called a representative
of $f$ or the function field $K$. Recall that $f'$ is a fiber space if and
only if $K$ is algebraically closed in $Q (X)$ (cf. Corollary 1.10 in
{\cite{ueno00}}).

We can now state the following main result of this subsection, whose main idea
comes from (the proof of) Theorem 1.11 in {\cite{mori1}}.

\begin{theorem}
  \label{below}Let $X$ be a compact complex manifold and $L \rightarrow X$ be
  a pseudoeffective line bundle equipped with a singular metric $h_L$ whose
  curvature current is semi-positive. Assume that $h_L$ has analytic
  singularities (see its definition in section 1) and $\kappa (X, K_X + L, h_L) \geq 0$. Let $f^{\flat} :
  X^{\flat} \rightarrow Y^{\flat}$ be a representative of
  \begin{equation}
    f (= \Phi_{|S_k |}) : X \dashrightarrow \tmop{Im} \Phi_{|S_k |}
    \backassign Y
  \end{equation}
  for any $k \in N (X, K_X + L, h_L)$ sufficiently large so that Proposition
  {\tmem{\ref{pdju}}} holds. Then $f^{\flat} : X^{\flat} \rightarrow
  Y^{\flat}$ is an analytic fiber space, $\kappa(X, K_X + L, h_L) =
  \dim Y^{\flat}$ and $\kappa (F, K_F + L_F, h_L |_F) = 0$, where
  $F $ is the very general (which means, outside a countable union of analytic
  subsets) fiber of $f^{\flat}$.
\end{theorem}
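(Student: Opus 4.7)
The plan is to handle the three conclusions in order of increasing difficulty, deducing the first two from the algebraic machinery of Section \ref{sect5} and reducing the third to the classical Iitaka fibration theorem via the analytic singularities assumption.

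First I would verify that $f^{\flat}$ is an analytic fiber space and that $\kappa(X, K_X + L, h_L) = \dim Y^{\flat}$. By construction $f^{\flat}:X^{\flat}\to Y^{\flat}$ is a surjective morphism: $X^{\flat}\to X$ resolves the base locus of $\Phi_{|S_k|}$, and $Y^{\flat}\to Y$ resolves the image. Proposition \ref{pdju} gives $Q(Y^{\flat})=Q(Y)=Q((X,K_X+L,h_L))$, Proposition \ref{alcl} says this field is algebraically closed in $Q(X^{\flat})=Q(X)$, and then Ueno's criterion (Corollary 1.10 in \cite{ueno00}) forces the general fibers of $f^{\flat}$ to be connected. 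The identity $\kappa(X,K_X+L,h_L)=\dim Y^{\flat}$ is immediate from $\kappa^{(2)}=\kappa^{(3)}$ in Proposition \ref{eqji} combined with $\dim Y=\dim\tmop{Im}\Phi_{|S_k|}=\dim Y^{\flat}$.

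For the fiber statement I would exploit the analytic singularities hypothesis to reduce to the classical Iitaka fibration theorem (Theorem 1.11(i) of \cite{mori1}, together with its extension to $\mathbb{Q}$-divisors and the algebraic reduction technique of Lemma \ref{wxj22} for non-algebraic $X$). Choose $k_0$ divisible enough that $k_0 L$ is a genuine line bundle and so that the rational coefficient in the local weight $c\log(|F_1|^2+\cdots+|F_N|^2)$ of $h_L$ becomes integral after multiplication by $k_0$. Take a log resolution $\mu:X'\to X^{\flat}$ of the ideal $(F_1,\ldots,F_N)$. Crucially, the analytic singularities structure guarantees that this \emph{single} blow-up principalizes \emph{all} the multiplier ideals simultaneously: for every divisible $n$, $\mathcal{I}_{nk_0}(h_L) = \mu_{\ast}\mathcal{O}_{X'}(K_{X'/X}-nk_0 D)$ for a fixed effective $\mathbb{Q}$-divisor $D$ on $X'$. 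By the projection formula,
\[
H^0(X,nk_0(K_X+L)\otimes \mathcal{I}_{nk_0}(h_L)) = H^0(X', nk_0(K_{X'}+M) + K_{X'/X}),
\]
with $M:=\mu^{\ast}L-K_{X'/X}-D$, identifying $\kappa(X,K_X+L,h_L)$ asymptotically with the classical Iitaka dimension $\kappa(X',K_{X'}+M)$ and identifying the Kodaira map on $X'$ associated to $K_{X'}+M$ birationally with $\mu^{\ast}\Phi_{|S_{nk_0}|}$. In particular, the classical Iitaka fibration of $(X',K_{X'}+M)$ is birationally equivalent to $f^{\flat}$.

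Now I would apply the classical Iitaka fibration theorem to $(X',K_{X'}+M)$: it yields $\kappa(G,K_G+M|_G)=0$ for the very general fiber $G$ of this classical fibration. Since the two fibrations are birationally equivalent, for very general $y$ the fibers $F=(f^{\flat})^{-1}(y)$ and $G$ are bimeromorphic; Proposition \ref{oo} applied fiberwise (using that $\mathcal{I}_k(h_L)|_{F}=\mathcal{I}_k(h_L|_F)$ for very general $F$, as recalled in Section \ref{sec2.1}) then gives $\kappa(F,K_F+L_F,h_L|_F)=\kappa(G,K_G+M|_G)=0$, completing the proof. The main obstacle is the verification that a \emph{single} log resolution principalizes all the $\mathcal{I}_{nk_0}(h_L)$ uniformly in $n$: this is precisely what analytic singularities of $h_L$ buys us. For a generic plurisubharmonic weight the multiplier ideals can jump irregularly between different powers, and no single resolution would realize the asymptotic algebra $S=\bigoplus_k S_k$ as the section algebra of a single $\mathbb{Q}$-divisor on a fixed blow-up, blocking the clean reduction to the classical Iitaka fibration theorem.
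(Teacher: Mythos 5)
Your overall structure — establish the fiber‑space property and the dimension formula from the algebraic preliminaries, then use the analytic‑singularities hypothesis to attack $\kappa(F,K_F+L_F,h_L|_F)=0$ — matches the paper, and the first two conclusions are handled the same way the paper handles them. But for the third conclusion you take a genuinely different route: you try to realize the whole graded system $\bigoplus_n S_{nk_0}$ as the section algebra of a \emph{single fixed} $\mathbb{Q}$-divisor $K_{X'}+M$ on one resolution $X'$, and then quote the classical Iitaka fibration theorem as a black box. The paper instead works directly on $X^{\flat}$ with a \emph{family} of Cartier divisors $E_l$ (one for each level $l$, defined via the invertible sheaves $l(K_{X^{\flat}}+L^{\flat})\otimes\mathcal I_l(f^{\flat*}h_L)$), establishes the inclusion $f^{\flat*}\mathcal O_{Y^{\flat}}(H)\subset\mathcal O_{X^{\flat}}(E_k)$, uses the Hölder relation $\mathcal I_k^{\otimes b}\cdot\mathcal I_a\subset\mathcal I_{a+kb}$ to build injections $\mathcal F_a\otimes\mathcal O(bH)\hookrightarrow\mathcal F_{a+kb}$, and then deduces $\operatorname{rk}\mathcal F_a\le 1$ by a commutative-diagram argument using $Q((S))=Q(Y^{\flat})$ — essentially re‑running Mori's rank argument at the level of direct image sheaves with multiplier ideals, rather than reducing to the classical statement.

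The reason that distinction matters is that your reduction has a genuine gap. You write ``Choose $k_0$ \ldots so that the rational coefficient in the local weight $c\log(|F_1|^2+\cdots+|F_N|^2)$ of $h_L$ becomes integral after multiplication by $k_0$,'' which is only possible if $c\in\mathbb Q$. The paper's definition of analytic singularities (Section 1) allows any $c>0$, and the paper's own computation (\ref{uski}) keeps $\alpha\in\mathbb R_{>0}$ and retains the floor functions $[k\alpha\lambda_j]$ precisely because of this. When $c$ is irrational the coefficients $\lfloor nk_0\alpha\lambda_j\rfloor - nk_0 + 1$ do \emph{not} lie on a common arithmetic progression, so there is no fixed effective $\mathbb Q$-divisor $D$ on a single resolution with $\mathcal I_{nk_0}(h_L)=\mu_*\mathcal O_{X'}(K_{X'/X}-nk_0D)$ for all divisible $n$; your key identification $S_{nk_0}\cong H^0(X',nk_0(K_{X'}+M)+K_{X'/X})$ fails, and the reduction to the classical Iitaka fibration theorem for a single $\mathbb Q$-divisor breaks down. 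The paper's proof is immune to this because nothing in steps (\ref{kdol})–(\ref{leftex}) ever tries to collapse the $E_l$ to multiples of a fixed divisor — only the inclusion $E_a + bE_k \supset E_{a+kb}$ (via Hölder) and the inclusion $f^{\flat*}H\subset E_k$ are used, both of which hold for arbitrary real exponents.

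A secondary issue: even granting $c\in\mathbb Q$, your fiber‑wise step glosses over some compatibilities. You need the single log resolution $\mu$ restricted to a very general fiber $F$ to remain a log resolution of $h_L|_F$ and the $K_{X'/X}$‑term bookkeeping (the extra exceptional term in your displayed $H^0$‑identity) to behave consistently on fibers; you invoke Proposition \ref{oo} ``applied fiberwise,'' but that proposition is stated for a single blow‑up with smooth center, not an arbitrary bimeromorphic equivalence of the two very general fibers. These are fixable, but they would need to be spelled out. In contrast the paper sidesteps all fiber‑wise comparison by computing the rank of $\mathcal F_a$ directly on $Y^{\flat}$.

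Summary: the first two conclusions you prove essentially as the paper does; for the third you propose a reduction to the classical Iitaka fibration theorem via a single log resolution, which is a real alternative idea, but as written it requires $c\in\mathbb Q$ (not assumed by the paper) and leaves some fiber‑wise compatibility to be checked, whereas the paper's direct rank argument on $\mathcal F_a$ handles irrational $c$ and avoids those issues.
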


\begin{proof}
  According to Proposition \ref{alcl}, one may find $f^{\flat} :
  X^{\flat} \rightarrow Y^{\flat}$ as an algebraic fiber space and
  $\kappa^{(2)} (X, K_X + L, h_L) = \dim Y^{\flat}$. By the analytic
  singularities assumption on $h_L$, we see that
  by taking a resolution of singularities of $f^{\flat \ast } \varphi_{L}$ ($h_L = e^{- \varphi_{L}}$ locally) one may assume
  \begin{equation}
    f^{\flat \ast } \varphi_{L} \sim \alpha \sum \lambda_j \log |g_j |,
  \end{equation}
  where $\alpha \in \mathbb{R}_{> 0}$, $g_j$ are local generators of the
  invertible sheaves $\mathcal{O}_X (- D_j)$ and $D = \sum \lambda_j D_j$ is a
  normal crossing divisor on $X^{\flat}$  which also contains all the components of the exceptional divisors. Then
  $\mathcal{I}_k (f^{\flat \ast } \varphi_{L})$ is computed by
  \begin{equation}
    \mathcal{I}_k (f^{\flat \ast } \varphi_{L}) = \mathcal{O}_{X'} (-
    \sum ([k \alpha \lambda_j] - k + 1) D_j) \label{uski}
  \end{equation}
  and therefore locally free.

  Let $E_{l}$ denote the $\mathbb{}$Cartier divisor on $X^{\flat}$
  associated to the invertible sheaf $\mathcal{O}_{X^b} (l  (K_{X^{\flat}} +
  L^{\flat}) \otimes \mathcal{I}_l (f^{\flat \ast } h_{L}))$ and $l \in
  \mathbb{N}$. Let $H$ be a very ample divisor on $Y^{\flat}$, we claim that
  \begin{equation}
    f^{\flat \ast } \mathcal{O}_{Y^{\flat}} (H) \subset \mathcal{O}_{X^b}
    (E_{k}) \label{kdol}
  \end{equation}
  holds with our choice of $k$.

  Indeed, let $1, \sigma_1, \ldots, \sigma_N$ be a basis of $H^0 (Y^{\flat},
  \mathcal{O}_{Y^{\flat}} (H) \mathcal{}) = \{\sigma \in Q (Y^{\flat}) ;
  (\sigma) + H \geq 0\}$. Here, for a Cartier divisor $D$, $'' D \geq 0''$
  means all the coefficients of $D$ is non-negative, i.e. $D$ is effective. By
  our choice of $k$ so that $Q ((S_0 [S_k])) = Q ((X , K_{X } + L , h_L))
  \supset Q (Y^{\flat})$ (cf. Proposition \ref{pdju}), there exist $N > 0$ and
  $\xi_0, \ldots, \xi_N \in H^0 (X^{\flat}, \mathcal{O}_{X^{\flat}} (E_{k}))$ such
  that $\xi_0 \neq 0$ and $\sigma_j = \xi_j / \xi_0$, $j = 0, \ldots N$. Hence
  we get
  \begin{equation}
    0 \leq (\xi_j) + E_{k} = f^{\flat \ast } ((\sigma_j) + H) + ((\xi_0)
    + E_{k} - f^{\flat \ast } H) .
  \end{equation}
  Since $H$ is very ample, we have \ $\bigcap_{j = 1}^N f^{\flat \ast } ((\sigma_j) +
  H) = \varnothing$; thus $(\xi_0) + E_{k} - f^{\flat \ast } H \geq 0$.
  Hence multiplying by $\xi_0$ gives $f^{\flat \ast } \mathcal{O}_{Y^{\flat}}
  (H) \subset \mathcal{O}_{X^b} (E_{k})$, which confirms our claim.

  Since $\kappa^{(2)} (X, K_X + L, h_L) \geq 0$, we have $H^0 (X^{\flat}, k_1
  (K_{X^{\flat}} + L^{\flat}) \otimes \mathcal{I}_{k_1} (f^{\flat \ast } h_L))
  \neq 0$ for some $k_1 \in N (X, K_X + L, h_L)$.
It follows from the restriction map $$H^0
  (X^{\flat}, k_1 (K_{X^{\flat}} + L^{\flat}) \otimes \mathcal{I}_{k_1}
  (f^{\flat \ast } h_L)) \rightarrow H^0 (X_y^{\flat}, k_1 (K_{X_y^{\flat}} +
  L_y^{\flat}) \otimes \mathcal{I}_{k_1} (f^{\flat \ast } h_L) |_{X^{\flat}_y})$$ that $\kappa^{(2)} (F,
  K_F + L_F, h_L |_F) \geq 0$ for general fibers $F=X^{\flat}_{y}$, thanks to the fact that
  $\mathcal{I}_{k_1} (f^{\flat \ast } h_L |_{X_y}) = \mathcal{I}_{k_1}
  (f^{\flat \ast } h_L) |_{X_y}$ holds generally. Therefore, to prove
  $\kappa^{(2)} (F, K_F + L_F, h_L |_F) = 0$ holds for very
  general $F$, it only suffices to show the coherent sheaf
  \begin{equation}
    \mathcal{F}_a \assign f^{\flat}_{\ast} \mathcal{O} (a (K_{X^{\flat}} +
    L^{\flat}) \otimes \mathcal{I}_{a } (f^{\flat \ast } h_L)) = :
    f^{\flat}_{\ast} \mathcal{O} (E_a)
  \end{equation}
  has rank at most $1$ for every $a \in N (X, K_X + L, h_L) .$

  Thanks to (\ref{kdol}), there exist injections
  \begin{equation}
    \mathcal{O} (E_a) \otimes f^{\flat \ast } \mathcal{O}_{Y^{\flat}} (b H)
    \subset \mathcal{O}  (E_a) \otimes \mathcal{O} (b E_{k}) \label{8ik}
  \end{equation}
  for any $b \in \mathbb{N}$ (since $\mathcal{O} (E_a)$ is locally free). By
  the basic relation $ \mathcal{I}_k (f^{\flat \ast
  } h_L)^{\otimes b} \cdot \mathcal{I}_a (f^{\flat \ast } h_L) \subset
  \mathcal{I}_{a + k b} (f^{\flat \ast } h_L)$ and the left exactness of the
  functor $f^{\flat}_{\ast}$ , (\ref{8ik}) also induces injections
  \begin{equation}
    \mathcal{F }_a \otimes \mathcal{O} (b H) \hookrightarrow f^{\flat}_{\ast}
    \mathcal{O} ((a + k b) (K_X + L) \otimes \mathcal{I}_k (f^{\flat \ast }
    h_{L})^{\otimes b} \otimes \mathcal{I} _a (f^{\flat \ast } h_L))
    \hookrightarrow \mathcal{F}_{a + k b} \label{leftex}
  \end{equation}
  for all $b$. One may obtain a commutative diagram
  \begin{equation}
    \text{} \begin{array}{ccc}
      H^0 (Y^{\flat}, \mathcal{F }_a \otimes \mathcal{O} (b H)) &
      \hookrightarrow & H^0 (Y^{\flat}, \mathcal{F}_{a + k b}) = H^0
      (X^{\flat}, \mathcal{O} (E_{a + k b}))\\
      \downarrow \beta &  & \downarrow \gamma\\
      \mathcal{F}_a \otimes \mathcal{O} (b H) \otimes_{\mathcal{O}_{Y^b}}
      \mathcal{O}_{Y^{\flat}, y} / \mathfrak{m }_y & \hookrightarrow &
      \mathcal{F}_{a + k b} \otimes_{\mathcal{O}_{Y^b}}
      \mathcal{O}_{Y^{\flat}, y} / \mathfrak{m }_y
    \end{array}
  \end{equation}
  from (\ref{leftex}). Since $Q ((X, K_X + L, h_L)) = Q (Y)$ (cf. Proposition \ref{pdju}),
  $\tmop{Im} \gamma$ is a vector space of dimension $\leq 1$. Since $\beta$ is
  surjective for $b \gg 1$, thus $\mathcal{F}_a$ has rank at most $1$.
\end{proof}

\section{ Proof of Corollary \ref{dio}}\label{sect4}

In this section, for a holomorphic vector bundle $E$ (resp. coherent sheaf
$\mathcal{E}$), the notation $\mathbb{P} (E)$ (resp. $\mathbb{P}
(\mathcal{E})$) stands for the projective fiber space containing all $1$-dim
quotients of fibers of $E$ (resp. the set of all quotient invertible sheaves
of $\mathcal{E}$, cf. (2.8) and (2.9) in {\cite{ueno00}}).

To give the proof of Corollary \ref{dio}, we first consider some results on
generalized Kodaira dimensions from the viewpoint of birational geometry.
The following proposition is a slight modification of Theorem 5.11 in
{\cite{ueno00}}. For completeness, we give details of its proof here.

\begin{proposition}
  \label{lemmakey}\label{prop4.3}Let $f : X \rightarrow Y$ be a surjective and
  proper holomorphic map between two compact complex manifolds $X$ and $Y$ with connected fibers. Let $L \rightarrow X$ be a  $\mathbb{Q}$-line bundle equipped with a singular metric $h_L$ with semi-positive
  curvature current. Then there exists a set $\Sigma \subset Y$ whose complement has measure zero, such that
  \begin{equation}
    {\kappa} (X, K_{X} + L, h_L) \leq {\kappa} (F, K_F + L
    |_F, h_L |_F) + \dim Y, \nobracket \nobracket
  \end{equation}
  where $F\subset f^{-1}(\Sigma)$ denotes the sufficiently general fiber of $f$.
\end{proposition}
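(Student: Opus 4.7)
The plan is to use the $\kappa^{(2)}$-characterization of the generalized Kodaira dimension given by Proposition~\ref{eqji}, so that it suffices to show
\[\dim \tmop{Im} \Phi_{|S_k|} \leq \dim Y + \kappa(F, K_F + L|_F, h_L|_F)\]
for every $k \in N(X, K_X + L, h_L)$, with $F = X_y$ ranging over a set of full measure in $Y$. Taking the maximum over $k$ then yields the desired subadditivity.

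First I would consider the product meromorphic map
\[\tilde\Phi_k := (f, \Phi_{|S_k|}) \colon X \dashrightarrow Y \times \mathbb{P} S_k^{\ast},\]
and let $Z_k$ denote the closure of its image. Since $f$ is surjective, the first projection $\pi_Y \colon Z_k \to Y$ is also surjective, and Remmert's proper mapping theorem gives
\[\dim Z_k = \dim Y + \dim \pi_Y^{-1}(y)\]
for $y$ outside a proper analytic subset of $Y$. The fiber $\pi_Y^{-1}(y)$ is (the closure of) $\Phi_{|S_k|}(X_y) \subset \mathbb{P} S_k^{\ast}$, and since projection to $\mathbb{P} S_k^{\ast}$ can only decrease dimensions one has $\dim \tmop{Im} \Phi_{|S_k|} \leq \dim Z_k$.

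Next I would compare $\Phi_{|S_k|}|_{X_y}$ with the Kodaira map on $X_y$. Let $\Sigma \subset Y$ be the intersection over all $k \in N(X, K_X + L, h_L)$ of the sets $Y_{k, h_L, \tmop{ext}}$ introduced in Section~\ref{sec2.1}, so that $Y \setminus \Sigma$ has measure zero. For $y \in \Sigma$ the restriction map
\[\rho_y \colon S_k \to H^0\bigl(X_y,\, k(K_{X_y} + L|_{X_y}) \otimes \mathcal{I}_k(h_L|_{X_y})\bigr)\]
is well-defined, and picking a basis of $S_k$ shows that $\Phi_{|S_k|}|_{X_y}$ coincides (up to the natural linear embedding $\mathbb{P}(\tmop{Im}\rho_y)^{\ast} \hookrightarrow \mathbb{P} S_k^{\ast}$) with the Kodaira map of the subspace $\tmop{Im}\rho_y$. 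Because $\Phi_{|V|}$ for any linear subspace $V \subseteq W$ factors through a linear projection $\mathbb{P} W^{\ast} \dashrightarrow \mathbb{P} V^{\ast}$, this gives
\[\dim \Phi_{|S_k|}(X_y) \leq \dim \Phi_{|H^0(X_y, k(K_{X_y}+L|_{X_y})\otimes \mathcal{I}_k(h_L|_{X_y}))|}(X_y) \leq \kappa(X_y, K_{X_y} + L|_{X_y}, h_L|_{X_y}).\]
As $\kappa(X_y, K_{X_y} + L|_{X_y}, h_L|_{X_y})$ is independent of $y \in \Sigma$ (cf.\ the discussion below Theorem~\ref{mainthm}), chaining the two inequalities and maximizing over $k$ finishes the argument.

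The only potential obstacle is bookkeeping of the measure-zero exceptional loci, which is handled automatically since $N(X, K_X + L, h_L)$ is at most countable and each $Y_{k, h_L, \tmop{ext}}$ has complement of measure zero. No deep analytic input is required beyond the identification $\kappa^{(2)} = \kappa^{(3)}$ of Proposition~\ref{eqji} and the fibrewise identity $\mathcal{I}_k(h_L)|_{X_y} = \mathcal{I}_k(h_L|_{X_y})$ already recorded in Section~\ref{sec2.1}; in particular, no positivity assumption on $Y$ or on $h_L$ enters, which is consistent with the fact that this bound is the ``easy direction'' of the addition formula appearing in Remark~\ref{ams}.
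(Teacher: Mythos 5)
Your proof is correct, and while it reaches the same conclusion via the same core mechanism (factor the Kodaira map of $S_k$ through a $Y$-relative object, apply the generic fibre-dimension formula, and compare with the fibrewise Kodaira map), it packages the argument more directly than the paper. The paper routes through the relative projectivization $\mathbb{P}(\mathcal{F}_k)$ of the direct image sheaf $\mathcal{F}_k = f_{\ast}\big(k(K_X+L)\otimes\mathcal{I}_k(h_L)\big)$: it needs $\mathcal{F}_k$ locally free over a Zariski open $U_k$, the base-change identification $\mathcal{F}_{k,y}\otimes k(y) = H^0(X_y,\cdot)$, the factorization $f = g_k\circ h_k$, and then a generically surjective map $h:h_k(X)\dashrightarrow\operatorname{Im}\Phi_k$ to compare with the global Kodaira image. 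You sidestep $\mathbb{P}(\mathcal{F}_k)$ and base change entirely by working with the closure $Z_k$ of the image of $(f,\Phi_{|S_k|})$ in $Y\times\mathbb{P}S_k$, where the generic fibre of $Z_k\to Y$ is, essentially tautologically, $\overline{\Phi_{|S_k|}(X_y)}$, and then estimating that against $\kappa(X_y,\cdot)$ via the linear-projection trick. The one difference worth noting is that the paper's fibre $h_k(X_y)$ is the Kodaira image of the \emph{full} system $H^0(X_y,k(K_{X_y}+L|_{X_y})\otimes\mathcal{I}_k(h_L|_{X_y}))$, whereas your fibre is the image of the sublinear system $\operatorname{Im}\rho_y\subset H^0(X_y,\cdot)$; your $\dim Z_k$ is therefore a priori $\leq \dim\Gamma_k$, and you recover the inequality by one extra projection step where the paper recovers it through the generically surjective $h$. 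Both are valid; your version trades the sheaf-theoretic bookkeeping for a slightly longer chain of elementary dimension comparisons, and correctly handles the measure-zero exceptional loci by intersecting the countably many $Y_{k,h_L,\operatorname{ext}}$.
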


\begin{proof}
  Let us assume $\kappa (X, K_{X } + L, h_L) \neq - \infty$ without loss of generality. Then for
  sufficiently large and divisible $k \in \mathbb{N}$, we set
  \begin{equation}
    \mathcal{F}_k \assign f_{\ast} (k (K_{X } + L) \otimes \mathcal{I}_k
    (h_L)) \neq 0,
  \end{equation}
  which is coherent over $Y$ by Grauert's direct image
  theorem. Moreover, by Grauert's theorem (cf. Theorem 10.6 and Theorem 10.7 in \cite{bertin}), there exists a Zariski open dense subset $U_k \subset
  Y_0$ such that $\mathcal{F}_k |_{U_k}$ is locally free and the following
  base change property
  \begin{equation}
    \mathcal{F }_{k, y}\otimes \mathcal{O}_{Y,y} /  \mathfrak{m}_y = H^0 (X_y, (k K_{X_y} {+ k L|_{X_y}} ) \otimes
    \mathcal{I}_k (h_L) |_{X_y}) \label{26}
  \end{equation}
  holds for $y \in U_k$. As mentioned in section \ref{sect2}, we already
  obtain that
  \begin{equation}
    \mathcal{I}_k (h_L) |_{X_y} = \mathcal{I}_k (h_L |_{X_y}) \label{iud}
  \end{equation}
  for all $y \in \Sigma_k$ where $\Sigma_k \subset Y$ is a subset with full
  measure. Let $\mathbb{P} (\mathcal{F}_k)$ be the projective fiber space
  associated to $\mathcal{F }_k$, then $f|_{f^{- 1} (U_k)}$ factors through a meromorphic map
  $h_k : X \dashrightarrow \mathbb{P} (\mathcal{F}_k)$ and a morphism $g_k : \mathbb{P}
  (\mathcal{F}_k) \rightarrow Y$, i.e. $f|_{f^{- 1} (U_k)} =
  g_k \circ h_k$ (cf. (2.10) in {\cite{ueno00}}).

  On one side, by (\ref{26}) and (\ref{27}), $h_k |_{X_y}$ for $y
  \in \Sigma_k  \bigcap U_k$ is given by the following meromorphic map
  \begin{equation}
    \Phi_{k, y} : X_y \dashrightarrow \mathbb{P} H^0 (X_y, (k K_{X_y} + k L
    |_{X_y}) \otimes \mathcal{I}_k (h_L |_{X_y})) \label{pijn}
  \end{equation}
  with respect to a \emph{sublinear} series of the divisor $k K_{X_y} + L|_{X_y}$
  (cf. (2.10) in {\cite{ueno00}}). It follows that
  \begin{equation}
    \dim h_k (X) = \dim Y + \dim \tmop{Im} \Phi_{k, y}
    \label{kol;}
  \end{equation}
  holds for $y \in \Sigma_k \bigcap U_k'$, where $U_k' \subset U_k$ is another
  Zariski open dense subset (cf. Remark \ref{atten}).

  On the other side, identifying
  \begin{equation}
    H^0 (Y, \mathcal{F}_k) = H^0 (X, k (K_{X } + L) \otimes \mathcal{I}_k
    (h_L)) \label{210}
  \end{equation}
  we obtain a meromorphic map
  \begin{equation}
    h : \mathbb{P} (\mathcal{F }_k) \dashrightarrow \mathbb{P} H^0
    (\mathbb{P} (\mathcal{F }_k), \mathcal{O }_{\mathbb{P} (\mathcal{F }_k)}
    (1)) = \mathbb{} \mathbb{P} H^0 (Y, \mathcal{F}_k) = \mathbb{P }^{N_k}
  \end{equation}
  such that $h \circ h_k$ is the Kodaira meromorphic mapping $\Phi_k$ on $X$ with respect to
  \textbar$H^0 (X, k (K_{X } + L) \otimes \mathcal{I}_k (h_L)) |$, here
  $N_k = \dim H^0 (X, k (K_{X } + L) \otimes \mathcal{I}_k (h_L)) - 1$.

  Therefore, there is a generically surjective (cf. Definition 2.6 in
  {\cite{ueno00}}) meromorphic map $h : h_k (X) \dashrightarrow \tmop{Im}
  \Phi_k .$ Taking $k$ sufficiently large and divisible, we conclude from
  (\ref{kol;}) and Proposition \ref{eqji} that
  \[ {\kappa} (X, K_{X} + L, h_L) = \dim \tmop{Im} \Phi_k \leq \dim
     h_k (X) = \dim Y + \dim \tmop{Im} \Phi_{k, y} \leq \]
  \begin{equation}
    \dim Y + {\kappa} (X_y, K_{X_y} + L|_{X_y}, h_L |_{X_y}) \nobracket
  \end{equation}
  for $y \in \Sigma$.
\end{proof}

\begin{remark}
  \label{atten}We might give some details of (\ref{kol;}) as follows.

Let $\Gamma_k \assign \overline{h_k (f^{- 1} (U_k))}$ be the closure of the
  image of $h_k$ in $\mathbb{P} (\mathcal{F}_k)$, which is a complex analytic
  subspace of $\mathbb{P} (\mathcal{F}_k)$ (or the projection of the graph of
  $h_k$ onto the second factor $\mathbb{P} (\mathcal{F}_k)$). When considering the generically surjective
  morphism $\tilde{g}_k : \Gamma_k \rightarrow Y$ induced by the restriction
  of $g_k$ on $\Gamma_k$, there is a Zariski open dense subset ${U'_k}
  \subset U_k$ such that $\dim \tilde{g}_k^{- 1} (y) = \dim \Gamma_k - \dim Y$
  is constant for any $y \in U_k'$. Since $h_k |_{X_y}$ is given by
  (\ref{pijn}) for any $y \in \Sigma_k  \bigcap U'_k$, we conclude that
  \[ \dim \tmop{Im} \Phi_{k, y} = \dim h_k (X_y) = \dim \tilde{g}_k^{- 1} (y)
     = \dim \Gamma_k - \dim Y \]
 holds for sufficiently general $y$.
\end{remark}

The following lemma can be seen as a quantitative version of Lemma 3.1 in
{\cite{juanyong-wang}}.

\begin{lemma}
  \label{lem4.4}Let $f : X \rightarrow Y$ be a surjective holomorphic map
  between a compact complex manifold $X$ and a projective manifold
  $Y$ with connected fibers. Let $L \rightarrow X$ be a  $\mathbb{Q}$-line bundle equipped with a singular metric $h_L$ with semi-positive
  curvature current and $A  \rightarrow Y$ be an ample  $\mathbb{Q}$-line bundle. If
  $\kappa (X, K_{X / Y} + L, h_L) \geq 0$ and $h_L$ has analytic
  singularities, then
  \begin{equation}
    {\kappa} (X, K_{X / Y} + L + f^{\ast} A , h_L) = {\kappa}
    \left( F, K_F + L \left|_F, h_L |_F) + \dim Y, \label{clai} \right.
    \right.
  \end{equation}
  where $F = X_y$ for sufficiently general $y \in Y$.
\end{lemma}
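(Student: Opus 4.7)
The plan is to prove the two inequalities separately, leveraging the ampleness of $A$ on $Y$ in an essential way.

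For the upper bound $\kappa(X, K_{X/Y} + L + f^{\ast}A, h_L) \leq \dim Y + \kappa(F, K_F + L|_F, h_L|_F)$, I would adapt the argument of Proposition \ref{prop4.3} verbatim, replacing the line bundle $K_X + L$ there by $K_{X/Y} + L + f^{\ast}A$. The crucial observation is that by adjunction $K_{X/Y}|_{X_y} = K_{X_y}$, and $f^{\ast}A|_{X_y}$ is trivial on any fibre $X_y = F$, so the fibrewise restriction of our line bundle is exactly $K_F + L|_F$. Setting $\mathcal{F}_k := f_{\ast}(k(K_{X/Y}+L+f^{\ast}A) \otimes \mathcal{I}_k(h_L))$, Grauert's base change provides, for $y$ in a Zariski open dense $U_k \subset Y$ intersected with the full-measure set $\Sigma_k$ where $\mathcal{I}_k(h_L)|_{X_y} = \mathcal{I}_k(h_L|_{X_y})$, a factorisation $f|_{f^{-1}(U_k)} = g_k \circ h_k$ with $h_k : X \dashrightarrow \mathbb{P}(\mathcal{F}_k)$. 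The fibrewise meromorphic map $h_k|_{X_y}$ coincides, up to a sublinear series, with the Kodaira map on $X_y$ associated to $|H^0(X_y, k(K_F + L|_F) \otimes \mathcal{I}_k(h_L|_F))|$, so its image has dimension at most $\kappa(F, K_F+L|_F, h_L|_F)$. The identity $\dim h_k(X) = \dim Y + \dim \operatorname{Im}\Phi_{k,y}$ from Remark \ref{atten}, combined with Proposition \ref{eqji}, will then complete the upper bound.

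For the lower bound $\kappa(X, K_{X/Y} + L + f^{\ast}A, h_L) \geq \dim Y + \kappa(F, K_F + L|_F, h_L|_F)$, the plan is to invoke Lemma \ref{lemma11} together with Lemma \ref{addti}, exactly as in the proof of Theorem \ref{mainthm}. Picking an ample line bundle $G$ on $Y$ and a quasi-psh function $\psi$ as in (\ref{podq}) associated to a very general $y \in \bigcap_{k=1}^{\infty} Y_{k k_0,\operatorname{ext}}$ with the curvature condition (\ref{dddd}) satisfied by $G - K_Y$, Lemma \ref{lemma11} guarantees that the coherent sheaf
\[
\mathcal{F}_k := \mathcal{O}_Y(G) \otimes f_{\ast}\bigl((k k_0 K_{X/Y} + k k_0 L) \otimes \mathcal{I}_{k k_0}(h_L)\bigr)
\]
is sufficiently generically globally generated. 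Applying Lemma \ref{addti} to $\mathcal{E} := (k k_0(K_{X/Y}+L) \otimes \mathcal{I}_{k k_0}(h_L)) \otimes f^{\ast}\mathcal{O}_Y(G)$ with the divisor $D_Y = k k_0 A - G$ (effective, in fact ample, on $Y$ for $k \gg 0$ since $A$ is ample) yields
\[
h^0\bigl(X,\; k k_0(K_{X/Y}+L+f^{\ast}A) \otimes \mathcal{I}_{k k_0}(h_L)\bigr) \;\geq\; h^0(Y, k k_0 A - G) \cdot \operatorname{rk} \mathcal{F}_k.
\]
Ampleness of $A$ gives $h^0(Y, k k_0 A - G) \geq C_1 \, k^{\dim Y}$ for $k \gg 0$, while $\operatorname{rk} \mathcal{F}_k = h^0(F, k k_0(K_F+L|_F) \otimes \mathcal{I}_{k k_0}(h_L|_F))$ for sufficiently general $F = X_y$. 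By Lemma \ref{simple}, passing to a suitable subsequence $\{k_{\nu}\}$ makes this last quantity at least $C_2 \, k_{\nu}^{\kappa(F, K_F+L|_F, h_L|_F)}$, and the desired lower bound follows upon taking $\limsup$.

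The main technical point will be the careful bookkeeping of the various exceptional loci---the Zariski opens $U_k$, the full-measure sets $\Sigma_k$ and $Y_{k k_0, h_L, \operatorname{ext}}$ from Section \ref{sec2.1}---so that the generic fibre $F$ simultaneously realises Grauert's base change, the multiplier-ideal restriction identity $\mathcal{I}_k(h_L)|_{X_y} = \mathcal{I}_k(h_L|_{X_y})$, and the constancy of $h^0(X_y,\,\cdot\,)$ in $y$. The analytic-singularities hypothesis on $h_L$ does not seem to enter the plan above in an essential way; I expect it to be relevant only through the appeal to Theorem \ref{newbo} when this lemma is fed back into Corollary \ref{dio}, where it secures a genuine generalised Iitaka fibration on the fibre side and hence the attainment of $\kappa(F, K_F+L|_F, h_L|_F)$ on the very general fibre.
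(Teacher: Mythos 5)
Your approach is genuinely different from the paper's. The paper proves both inequalities at once by passing to a representative $\Phi' : X' \to \mathrm{Im}\,\Phi'$ of the generalized Iitaka fibration of $(X, K_{X/Y}+L+f^{\ast}A, h_L)$, observing that the general fiber $G'$ of $\Phi'$ is contracted by $f'$, and combining (i) $\tilde{\kappa}(F', \cdot) \geq \dim\mathrm{Im}\,\Phi'|_{F'} = \tilde{\kappa}(X',\cdot) - \dim Y$ with (ii) Proposition \ref{lemmakey} applied to $\Phi'|_{F'}$ plus the vanishing $\tilde{\kappa}(G',\cdot) = 0$. That last vanishing is exactly where Theorem \ref{below} --- and hence the analytic-singularities hypothesis --- is used, so your closing remark is slightly off: analytic singularities enters the paper's proof of \emph{this} lemma directly, not only later when the lemma is fed into Corollary \ref{dio}. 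Your replacement for the lower bound (Lemma \ref{lemma11} plus Lemma \ref{addti} with $D_Y = k k_0 A - G$, so that the ample $G$ from the extension lemma is absorbed into $kk_0 f^{\ast}A$) is a neat observation: unlike the proof of Theorem \ref{mainthm} there is no ample twist $H$ on $X$, which is exactly what permits an estimate for $\kappa$ rather than $\kappa_\sigma$ on the left, and as you note the argument does not invoke analytic singularities at all. If it held in full generality this would strengthen the lemma.

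The genuine gap is the ambient-manifold hypothesis. Lemma \ref{lem4.4} is stated for a \emph{compact complex} manifold $X$, and the paper's proof (via Propositions \ref{pdju}, \ref{alcl}, \ref{eqji}, \ref{lemmakey} and Theorem \ref{below}) needs only that. Your lower-bound argument, however, rests on Lemma \ref{lemma11}, whose proof uses the relative $k$-Bergman metric positivity (Theorem \ref{zhouzhu}) and the Ohsawa--Takegoshi extension theorem on weakly pseudoconvex K\"ahler manifolds (Theorem \ref{sdf}); both require $X$ compact K\"ahler. So your proof covers only the K\"ahler case of the lemma, not the compact complex case actually claimed. (This suffices for the downstream application to Corollary \ref{dio}, where $X$ is K\"ahler, but it does not prove Lemma \ref{lem4.4} as stated.) A smaller, fixable point: Lemma \ref{addti} is stated for $X$ projective, and while its proof (projection formula, Grauert base change, linear algebra on sections) extends verbatim to compact $X$, you should say so explicitly rather than cite it as a black box; similarly, Propositions \ref{pdju} and \ref{eqji} are formulated for the adjoint system $K_X + L$ and you implicitly use the analogue for $K_{X/Y}+L+f^{\ast}A$, which does hold by the same Okounkov-body arguments but again deserves a word.
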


\begin{proof}
  By multiplying a sufficiently large and divisible integer $k_0$, we may
  assume that $H^0 (X, (k_0 K_{X / Y} + k_0 L) \otimes \mathcal{I}_{k_0}
  (h_L)) \neq 0$ and $k_0 A $ is a very ample line bundle over $Y $.

  Now we consider the meromorphic map
  \begin{equation}
    \Phi \assign \Phi_{| V_{k_0} | \nobracket \nobracket} : X \dashrightarrow
    \mathbb{P} V_{k_0}  \label{214}
  \end{equation}
  by enlarging $k_0$ so that Proposition \ref{pdju} holds, where $\mathbb{}
  V_{k_0} = H^0 (X, (k_0 K_{X / Y} + k_0 L + k_0 f^{\ast} A ) \otimes
  \mathcal{I}_{k_0} (h_L))$. Upon a modification $\mu : X' \rightarrow X$,
  where $X'$ is obtained as a smooth model of the graph of the meromorphic map $\Phi_{| V_{k_0} | \nobracket
  \nobracket}$, we obtain that $$\Phi' \assign \Phi'_{| V'_{k_0} |
  \nobracket \nobracket} : X' \rightarrow \tmop{Im} \Phi'_{| V'_{k_0} |
  \nobracket \nobracket} =\tmop{Im} \Phi_{| V_{k_0} |
  \nobracket \nobracket} \subset \mathbb{P} V'_{k_0}=\mathbb{P} V_{k_0}$$ is an analytic fiber
  space (i.e. $\Phi'$ is a representative of $\Phi$, cf. Theorem \ref{below}), where
  $$
  V'_{k_0}:=H^0(X',k_0(K_{X' / Y} + L' + \mu^{\ast} f^{\ast} A)\otimes \mathcal{I}_{k_0}(h'_L))=V_{k_0}, (L',h'_L)=(\mu^{*}L,\mu^{*}h_L).
  $$
  We also obtain
  $$ \tilde{\kappa} (F, K_F + L |_F, h_L |_F)=\tilde{\kappa} (F', K_F' + L' |_{F'}, h'_L |_{F'}) $$
  \begin{equation}
    \label{jj} \tilde{\kappa} (X, K_{X / Y} + L + f^{\ast} A, h_L) =
    \tilde{\kappa} (X', K_{X' / Y} + L' + (f')^{\ast} A, h_{L'})
  \end{equation}
  by Proposition \ref{oo}, where $F$ (resp. $F'$) is the general fiber of $\Phi$ (resp. $\Phi'$). Additionally we have $\dim \tmop{Im} \Phi' =\tilde{ \kappa }(X', K_{X' / Y} + L + (f')^{\ast}A, h_L)$. Viewing $H^0 (Y, k_0 A) = H^0 (X', k_0 (f')^{\ast} A)
  \hookrightarrow V'_{k_0}$, the following commutative diagram
  \begin{equation}
    \text{} \begin{array}{ccc}
      \mathbb{P} V'_{k_0}& \dashrightarrow & \mathbb{P} H^0 (Y, k_0 A) \\
      \Phi' \uparrow &  & \uparrow \\
      X'& \overset{f'= f \circ \mu}{\longrightarrow}  & Y
    \end{array} \label{216}
  \end{equation}
  holds. As a consequence of (\ref{216}), the general fiber $G'$ of $\Phi'$ is
  contracted by $f'$, hence we obtain a fiber space
   $\Phi' |_{F'} : F' \rightarrow \tmop{Im} \Phi' |_{F'}$ whose general
  fiber is $G'$ (note $F'$ is connected and $G'$ is also connected thanks to Proposition \ref{alcl}). It is already known that $\mathcal{I}_{k_0}
  (h_{L'}) |_{F'} = \mathcal{I}_{k_0} (h_{L'} |_{F'})$ holds for sufficiently general
  fibers $F'$. Therefore, $\Phi' |_{F'}$ is determined by a  \emph{sublinear} series of $| \mathbb{} H^0 (F', (k_0 K_{F'} + k_0{ L'}|_{F'})
  \otimes \mathcal{I}_{k_0} (h_{L'} |_{F'})) |$, which contains elements that can be extended to $V'_{k_0}$.

  On one hand, we obtain
  \[ \tilde{\kappa} (F', K_{F'} + L |_{F'}, h_{L'} |_{F'}) \geq \dim \tmop{Im} \Phi' |_{F'} =
     \dim F' - \dim G' = \nobracket \nobracket \]
  \begin{equation}
    \dim \tmop{Im} \Phi' - \dim Y = \tilde{\kappa} (X', K_{X' / Y} +
    L' + (f')^{\ast} A,  h_{L'}) - \dim Y. \label{27}
  \end{equation}
  On the other hand, applying Proposition \ref{lemmakey} to $\Phi' |_{F'}$ we get
$$
    \tilde{\kappa}( F', K_{F'} + L' |_{F'}, h_{L'} |_{F'}) = \tilde{\kappa}
    ( F', K_{F'} + (L' + (f')^{\ast} A) |_{F'}, h_{L'} |_{F'}) \leq
$$
\[
    \dim\tmop{Im} \Phi' |_{F'} + {\tilde{\kappa} (G', K_{G'}
    + (L' + (f')^{\ast} A) |_{G'}, h_{L'} |_{G'}) \nobracket \nobracket}= \]
\begin{equation}\label{pn}
     \dim
    \tmop{Im} \Phi' |_{F'} = \tilde{\kappa} (X', K_{X' / Y} +  L' +
    (f')^{\ast} A , h_{L'}) - \dim Y.
\end{equation}
  To see why ${\tilde{\kappa} (G', K_{G'}
    + (L' + (f')^{\ast} A) |_{G'}, h_{L'} |_{G'}) \nobracket \nobracket}=0$ in (\ref{pn}) (where we have used the
  assumption on $h_L$), one can just apply Theorem \ref{below} to the generalized Iitaka fibration $\Phi'$. In the end, our claim of (\ref{clai}) follows easily by (\ref{27}),
  (\ref{pn}) and (\ref{jj}) and Proposition \ref{eqji}.
\end{proof}

We are now in position to accomplish the proof of Corollary \ref{dio}.

\begin{proof}
  (Proof of Corollary \ref{dio}) Let us first fix an ample bundle $A$ over $Y$
  such that $A - K_Y$ is sufficiently ample and satisfies the assumption
  (\ref{dddd}) in Lemma \ref{lemma11}. Since $K_Y$ is a big line bundle, we
  may write
  \begin{equation}
    k K_Y = A + E
  \end{equation}
  for some large $k \in \mathbb{N}$ with an effective line bundle $E$. We may assume that $f_{\ast} ((2 k K_{X /
  Y} + 2 k L) \otimes \mathcal{I}_{2 k} (h_L) \mathcal{}) \neq 0$, otherwise
  there is nothing to prove. Then
  Lemma \ref{lemma11} already implies that ${\kappa} \left( X, K_{X/Y} + L +
  \frac{1}{2 k} f^{\ast} A, h_L \right) \geq 0$, hence one can apply Lemma
  \ref{lem4.4} to obtain that
  \[ {\kappa} (X, K_X + L, h_L) = {\kappa} (X, K_{X / Y} + L +
     f^{\ast} K_Y, h_L) \geq \]
  \[ {\kappa} (X, (K_{X / Y} + L + \frac{1}{2 k} f^{\ast} A) +
     \frac{1}{2 k} f^{\ast} A, h_L) = {\kappa} (F, K_F + L |_F, h_L |_F)
     + \dim Y \nobracket \]
  holds for general $F$. For the other direction, it will just be an immediate consequence
  of Proposition \ref{lemmakey}.
\end{proof}

\

\noindent\textbf{Acknowledgements:} The authors would sincerely thank the anonymous referee(s) for his/her suggestions to the original version of this paper, especially for Remark \ref{refer}. The first author would like to thank Prof. Z. W. Wang for some fruitful discussion about this paper and Dr. J. Y. Wang for introducing some background about the Iitaka conjecture and some of his work on this topic.

\end{document}